\title{Templicial nerve of an $A_\infty$-category}
\author{Violeta Borges Marques}
\address[Violeta Borges Marques]{Universiteit Antwerpen, Departement Wiskunde, Middelheimcampus, Middelheimlaan 1, 2020 Antwerp, Belgium}
\email{Violeta.BorgesMarques@uantwerpen.be}
\author{Arne Mertens}
\address[Arne Mertens]{Universiteit Antwerpen, Departement Wiskunde, Middelheimcampus, Middelheimlaan 1, 2020 Antwerp, Belgium}
\email{arne.mertens@uantwerpen.be}
\thanks{
This project has received funding from the European Research Council (ERC) under the European Union’s Horizon 2020 research and innovation programme (grant agreement No. 817762).
}
\subjclass[2022]{18G70, 18N60 (Primary)}
\keywords{}
\DeclareMathOperator{\cvee}{\vee\cdots\vee}
\DeclareMathOperator{\Ob}{Ob}
\DeclareMathOperator{\Hom}{Hom}
\DeclareMathOperator{\id}{id}
\DeclareMathOperator{\Fun}{Fun}
\DeclareMathOperator{\Set}{Set}
\DeclareMathOperator{\Mod}{Mod}
\DeclareMathOperator{\Vect}{Vect}
\DeclareMathOperator{\Ch}{Ch}
\DeclareMathOperator{\SSet}{SSet}
\DeclareMathOperator{\Quiv}{Quiv}
\DeclareMathOperator{\Cat}{Cat}
\DeclareMathOperator{\Gr}{Gr}
\newcommand{\fint}{\mathbf{\Delta}_{f}} %category of finite intervals
\newcommand{\simp}{\mathbf{\Delta}} %simplicial walking category
\newcommand{\nec}{\mathcal{N}ec} %category of necklaces
\newcommand{\ts}{S_{\otimes}} %category of ts-objects
\newcommand{\suAinftyCat}{A_\infty\Cat_\text{su}}
\newcommand{\lra}{\longrightarrow}
\newcommand{\K}{\mathbb{K}}
\newcommand{\vvv}{\mathcal{V}}
\newcommand{\aaa}{\mathcal{A}}
\newcommand{\bbb}{\mathcal{B}}
\newcommand{\ccc}{\mathcal{C}}
\newcommand{\op}{\mathrm{op}} 
\newcommand{\necop}{\mathcal{N}ec^{\op}}
\DeclareMathOperator{\ac}{ac}
\DeclareMathOperator{\ine}{in}
\newtheorem{Thm}{Theorem}[section]
\newtheorem*{Thm*}{Theorem}
\newtheorem{Lem}[Thm]{Lemma}
\newtheorem{Prop}[Thm]{Proposition}
\newtheorem{Cor}[Thm]{Corollary}
\theoremstyle{definition}
\newtheorem{Def}[Thm]{Definition}
\newtheorem{Ex}[Thm]{Example}
\newtheorem{Exs}[Thm]{Examples}
\newtheorem{Con}[Thm]{Construction}
\newtheorem{Not}[Thm]{Notation}
\theoremstyle{remark}
\newtheorem{Rem}[Thm]{Remark}
\begin{document}

\begin{abstract}
The framework of templicial vector spaces was put forth in \cite{lowen2024enriched} as a suitable generalization of simplicial sets in order to develop a theory of enriched quasi-categories, called quasi-categories in vector spaces. We construct a lift of Faonte's $A_{\infty}$-nerve \cite{faonte2017simplicial} which lands in templicial vector spaces. Further, we show that when restricted to dg-categories, this nerve recovers the templicial dg-nerve of \cite{lowen2023frobenius}, and that the nerve of any $A_{\infty}$-category is a quasi-category in vector spaces.
\end{abstract}

\maketitle

\tableofcontents

\section{Introduction}
\par $A_\infty$-algebras and $A_\infty$-spaces were introduced by Stasheff in 1963 in his study of $H$-spaces \cite{Stasheff1963Homotopy}. They are algebraic structures suitable for encoding the homotopy associative concatenation product on the loop space $\Omega_x (X)$ of a based topological space $(X,x)$. In the 90's Fukaya introduced $A_\infty$-categories, many objects versions of $A_\infty$-algebras, in the context of symplectic topology \cite{fukaya1993morse}. Given a symplectic manifold $(M^{2n}, \omega)$, Fukaya constructed an $A_\infty$-category counting the intersections of Lagrangian submanifolds, $Fuk(M,\omega)$. These objects received renewed attention after Konstevich's speech at the 1994 ICM, where he gave the modern formulation of the Homological Mirror Symmetry Conjecture which states that for a given Calabi-Yau (CY) manifold $Y$, there exists a mirror CY manifold $\hat{Y}$ such that $D(Fuk(Y))\simeq D(\text{Coh}(\hat{Y}))$, where $D$ denotes the derived category and $\text{Coh}$ denotes the dg-category of coherent sheaves, \cite{Kontsevich1997Homological}. The search for mirror pairs has motivated the construction of a plethora of $A_\infty$-algebras inspired by Mathematical Physics, Geometry and Representation Theory. Nowadays $A_\infty$-algebras are better conceptually understood in the general framework of Operad Theory, where the $A_\infty$-operad is calculated as the minimal cofibrant resolution of the associative operad $\text{Assoc}$ \cite{Loday2012algebraic}. For treatments of diverse aspects of the theory and applications of $A_\infty$-categories see \cite{keller2001introduction}\cite{lefevrehasegawa}
\par In \cite{faonte2017simplicial}, Faonte established a connection between $A_\infty$-categories and the highly succesful quasi-categories of Boardman-Vogt \cite{boardmanvogt}, Joyal \cite{joyal2002quasi} and Lurie \cite{lurie2009higher}. The author constructed a functor called the simplicial nerve of a (strongly unital) $A_\infty$-category, $N^{A_\infty}: \suAinftyCat \to \SSet$, and showed that $N^{A_\infty}(\mathcal{A})$ is a quasi-category for every strictly unital $A_\infty$-category $\mathcal{A}$. This construction strictly generalizes Lurie's dg-nerve $N^{dg}: dg\Cat\to \SSet$ \cite{lurie2016higher}.
\par Templicial objects in a monoidal category were recently introduced in \cite{lowen2023frobenius}, \cite{lowen2024enriched} as an appropriate generalization of simplicial sets upon replacing $\Set$ by a not necessarily cartesian monoidal category $\vvv$. Within this framework, the authors go on to define quasi-categories in $\vvv$ as a counterpart of quasi-categories “in sets”. Although more involved in terms of algebraic structure, these templicial objects - comprised of $\vvv$-objects - are just as tangible as simplicial sets. Of particular interest is the case of templicial modules, i.e., $\vvv=\Mod \K$ with $\K$ a unital commutative ring. In this context, a templicial lift of Lurie's dg-nerve was constructed, allowing one to regard quasi-categories in modules as a kind of “weak dg-categories concentrated in homologically positive degree". More precisely, there is a canonical "underlying simplicial set" functor $\Tilde{U}: \ts\Mod \K \to \SSet$ and the templicial dg-nerve $N_\K^{dg}: dg\Cat \to \ts\Mod k$ satisfies $\Tilde{U}\circ N^{dg}_\K\cong N^{dg}$.
Another great feature of templicial modules and quasi-categories in modules is that they are amenable to algebraic deformation theory; the study of infinitesimal deformations of templicial modules was initiated in \cite{BMLM23}.
\par The main goal of this paper is to construct a templicial lift of Faonte's $A_\infty$-nerve, $N^{A_\infty}_\K: \suAinftyCat \to \ts\Mod \K$, where $\K$ is now assumed to be a field. In Section \ref{Sec:AooNerve}, we define the templicial $A_\infty$-nerve and show that it is well defined and functorial; this involves extensive verifications that are done directly on $N^{A_\infty}_\K(\aaa)$ for a strongly unital $A_\infty$-category $\aaa$. As the simplicial $A_\infty$-nerve is induced by a cosimplicial dg-category (see Subsection \ref{Subsubsec:SimpAooNerve}), we expect the templicial $A_\infty$-nerve to fit the recent framework of templicial nerves induced by conecklicial objects of \cite{mertens2023nerves}. This would greatly simplify verifications and is currently under investigation. In Section \ref{Sec:MainResults}, we show the three main features of our construction:
\begin{enumerate}
    \item Corollary \ref{Cor:UnderlyingSSet}: $N^{A_\infty}_\K$ is a templicial lift of Faonte's simplicial $A_\infty$-nerve, in the sense that $\Tilde{U}\circ N^{A_\infty}_\K\cong N^{A_\infty}$;
    \item Corollary \ref{Cor:dgNerve}: the templicial $A_\infty$-nerve strictly generalizes the templicial dg-nerve, in the sense that for a dg-category $\mathcal{C}$, regarded as a strictly unital $A_\infty$-category, $N^{A_\infty}_\K(\mathcal{C})\cong N^{dg}_\K(\mathcal{C})$;
    \item Theorem \ref{Thm:QCat}: for any $A_\infty$-category $\mathcal{A}$, $N^{A_\infty}_\K(\mathcal{A})$ is a quasi-category in vector spaces.
\end{enumerate}

\subsection{Signs and conventions}
\par Throughout this article we will make use of homological degree, i.e., differentials \emph{decrease} degree. This is contrary to \cite{faonte2017simplicial}, but has no impact on signs. 
\par Another difference from \cite{faonte2017simplicial} is the order of the inputs of the multiplications and of the higher components of the $A_\infty$-functors: we will use \emph{multiplicative} notation while in \cite{faonte2017simplicial} \emph{compositional} notation is used. More explicitly, for an $A_\infty$-category $\aaa$ and $x_0,...,x_n\in \Ob \aaa$, the author in loc. cit. expresses the multiplications as
\begin{equation*}
    \tilde{m}_n: \aaa_{\bullet}(x_{n-1},x_n)\otimes \cdots \otimes \aaa_{\bullet}(x_0,x_1)\to \aaa_{\bullet}(x_0,x_1)
\end{equation*}
\noindent while we shall express them as 
\begin{equation*}
    m_n = \tilde{m}_{n}\tau_{n}: \aaa_{\bullet}(x_0,x_1)\otimes \cdots \otimes \aaa_{\bullet}(x_{n-1},x_n)\to \aaa_{\bullet}(x_0,x_n)
\end{equation*}
For any $C_{1},\dots,C_{n}\in \Ch\K$, $\tau_{n}$ denotes the symmetry of the monoidal category $(\Ch \K, \otimes , \K{[0]})$, which introduces an extra sign:
\[
\begin{tikzcd}[row sep=0]
    C_1\otimes \cdots \otimes C_n \arrow{r}{\tau_n}& C_n \otimes \cdots \otimes C_1\\
    x_1\otimes \cdots \otimes x_n \arrow[mapsto]{r}& (-1)^{\sigma(|x_1|,...,|x_n|)}x_n\otimes \cdots \otimes x_1
\end{tikzcd}
\]
with $\sigma(i_1,...,i_n)=\sum_{1\leq a < b \leq n} i_a i_b$.

We shall make use of the sign convention of \cite{faonte2017simplicial}\cite{lefevrehasegawa}, with signs appropriately modified in order to accommodate the symmetry $\tau_{n}$. Note that the resulting signs coincide with those of \cite{keller2001introduction}\cite{getzler1990Ainfty}.

\vspace{0,3cm}
\noindent \emph{Acknowledgement.}
The authors would like to thank Wendy Lowen for her helpful discussions and support during the writing of this paper.

\section{Preliminaries}
\label{Sec:Pre}
\subsection{Strictly unital $A_\infty$-categories and their simplicial nerve}
\label{Subsec:AooCats}
\par In this section we collect some basic definitions and facts about $A_\infty$-categories. For an extensive treatment, we refer to \cite{lefevrehasegawa} and \cite{faonte2017simplicial}. Throughout the paper, let $\K$ be a field; all vector spaces are $\K$-vector spaces.
\begin{Def}
\label{Def:AooCat}
A (small) \emph{$A_\infty$-category} $\aaa$ is the data of 
\begin{enumerate}[(a)]
    \item a set of objects $\Ob(\aaa)$;
    \item for any pair of objects $x,y\in\Ob(\aaa)$, a graded vector space $\aaa_{\bullet}(x,y)$;
    \item for any $k\geq 1$ and any sequence of objects $x_0,...,x_k$, a morphism of degree $k-2$, called the \emph{multiplication}:
    \begin{equation*}
        m_k: \aaa_{\bullet}(x_0,x_1)\otimes \cdots\otimes \aaa_{\bullet}(x_{k-1}, x_k) \to \aaa_{\bullet}(x_0,x_k)
    \end{equation*}

\end{enumerate}
\noindent such that for any $k\geq 1$ the following equation holds
\begin{equation}
\label{Eq:AooRelation}
\sum_{\substack{r+s+t=k\\ r,s\geq 0, s > 0}}(-1)^{r+st} m_{r+1+t}(\id^{\otimes r}\otimes m_s \otimes \id^{\otimes t})=0
\end{equation}
\noindent We will often write $m_1=\partial $ and $m_2=m$.
\par An $A_\infty$-category $\aaa$ is called \emph{strictly unital} if there is the data of
\begin{enumerate}[(a),start=4]
    \item for every object $x \in \Ob (\aaa)$, an element $1_x\in Z_0 \aaa (x,x)$, called the \emph{identity at $x$}.
\end{enumerate}
\noindent such that for any $x\in\Ob(\aaa)$, $k>2$ and $1\leq j \leq k$
\begin{equation}
\label{eq:AooUnit}
    m(\id \otimes u_x) = m(u_x\otimes\id)=\id\quad \text{and}\quad m_k(\id^{j-1}\otimes u_x\otimes \id^{k-j})=0
\end{equation}

\noindent where we identified $1_x$ with a morphism $u_x: \K[0]\to \aaa_{\bullet}(x,x)$.
\end{Def}

It is worth to analyze equation \eqref{Eq:AooRelation} for low values of $k$. 
\begin{align*}
\label{Eq:AooRelk=1,2,3}
(k=1)&\quad \partial\circ \partial = 0\\
(k=2)&\quad \partial m = m(\id \otimes \partial) + m(\partial \otimes \id )\\
(k=3)&\quad \partial m_3 + m_3(\partial \otimes \id^{\otimes 2}) + m_3(\id \otimes \partial \otimes \id) + m_3( \id^{\otimes 2}\otimes\partial) \\
&\quad = m(\id \otimes m) - m(m \otimes \id )
\end{align*}
\par The first equation means that $\aaa_{\bullet}(x,y)$ are chain complexes for all $x,y\in \Ob(\aaa)$ when equipped with $m_1$, the second expresses the fact that the composition $m$ is a map of chain complexes and the third spells out that $m$ is associative up to the homotopy $m_3$.
\begin{Ex}
\label{Ex:dgCats} 
Let $\mathcal{C}_{\bullet}$ be a dg-category with differential $\partial$ and composition $m$. Then $\mathcal{C}_{\bullet}$ is naturally a strictly unital $A_\infty$-category with the same objects, the same chain complexes, the same identities, $m_1 = \partial$, $m_2 = m$ and $m_k = 0$ for $k > 2$.
\end{Ex}
\begin{Ex}\label{Ex:AooCatPullback}
\par Let $\aaa$ be a strictly unital $A_\infty$-category and $f: O\to \Ob \aaa$ be a map of sets. Then $f^*(\aaa)$ is a stritcly unital $A_\infty$-category with object set $T$, $f^{*}\aaa_{\bullet}(x,y)=\aaa_{\bullet}(f(x), f(y))$, units $u_{f(x)}$ for all $x,y\in O$ and multiplications
$$
m_k: \aaa_{\bullet}(f(x_0),f(x_1))\otimes \cdots\otimes \aaa_{\bullet}(f(x_{k-1}), f(x_k)) \to \aaa_{\bullet}(f(x_0),f(x_k))
$$
given by those of $\mathcal{A}$ for all $x_{0},\dots,x_{k}\in O$.
\end{Ex}
\begin{Def}
\label{Def:AooMor}
\par Let $\aaa$ and $\bbb$ be two $A_\infty$-categories. An \emph{$A_\infty$-functor} $f: \aaa \to \bbb$ is the data of
\begin{enumerate}[(a)]
    \item a map of sets $f_0: \Ob(\aaa)\to \Ob(\bbb)$;
    \item for $k\geq 1$ and any sequence of objects $x_0,...,x_k\in\Ob(\aaa)$, a morphism of degree $k-1$:
    \begin{equation*}
        f_k: \aaa_{\bullet}(x_0,x_1)\otimes \cdots \otimes \aaa_{\bullet}(x_{k-1}, x_k) \to \bbb_{\bullet}(f(x_0), f(x_k))
    \end{equation*}
\end{enumerate}
\noindent such that for any $k\geq 1$ the following equation holds
\begin{equation}
\label{Eq:AooMorRel}
\sum_{\substack{r+s+t=k\\ r,t\geq 0, s > 0}}\!\!\!\!(-1)^{r+st}f_{r+1+t}\left(\id^{\otimes r}\otimes m^\aaa_s \otimes \id^{\otimes t}\right)\,=\!\!\!\sum_{\substack{i_1+\cdots + i_r=k\\ r > 0, i_{j} > 0}} \!\!\!\!(-1)^{{\epsilon_g(i_1,...,i_r)}}m^\bbb_r\left(f_{i_1}\otimes \cdots \otimes f_{i_r}\right)
\end{equation}
\noindent where $\epsilon_g(i_1,...,i_r)=\sum_{l=1}^{r-1}(r-l)(i_{l}-1)$. If moreover $\aaa$ and $\bbb$ are strictly unital, we require that for any $x\in \Ob(\aaa)$, $k>1$ and $1\leq j \leq k $
\begin{equation}
\label{eq:AooFunctorUnit}
    f_1 u_x=u_{f_0(x)}\quad \text{and}\quad f_k(\id^{j-1}\otimes u_x\otimes \id^{k-j})=0
\end{equation}
\end{Def}
\par For low values of $k$, equation \eqref{Eq:AooMorRel} becomes 
\begin{align*}
\label{Eq:AooRelk=1,2,3}
(k=1)&\quad \partial^\bbb f_1 = f_1\partial^\aaa \\
(k=2)&\quad \partial^\bbb f_2 + f_2(\partial^\aaa \otimes \id)+ f_2(\id \otimes \partial^\aaa ) = f_1 m^\aaa - m^\bbb(f_1 \otimes f_1)
\end{align*}
\par The first equation states that $f_1$ is a map of chain complexes and the second equation that $f_1$ commutes with the composition up to a homotopy $f_2$.
%\begin{Rem}
%Let $F: \mathcal{C}_{\bullet} \to \mathcal{D}_{\bullet}$ be a $dg$-functor of $dg$-categories. Then $F$ gives rise to a strict $A_\infty$-functor between the corresponding categories. However, in general, there are non-strict $A_\infty$-functors between $dg$-categories.
%\end{Rem}

\begin{Con}
\par Let $f: \aaa \to \bbb$ and $g: \bbb \to \ccc$ be two $A_\infty$-functors between (strictly unital) $A_\infty$-categories. Then the composition $g\circ f: \aaa \to \ccc$ is given by 
\begin{align*}
(g\circ f)_0 &= g_0\circ f_0\\
(g\circ f)_k &= \sum_{\substack{i_1+\cdots + i_r = k\\ r > 0, i_{j} > 0}} (-1)^{\epsilon_g(i_1,...,i_r)}g_r\left(f_{i_1}\otimes \cdots \otimes f_{i_r}\right)
\end{align*}
\noindent for $k\geq 1$.
\end{Con}
\begin{Not}
\label{Not:CatOfAooCats}
\par We will denote the category of strictly unital $A_\infty$-categories and $A_\infty$-functors $\suAinftyCat$ and from now on we will refer to objects of $\suAinftyCat$ simply as $A_\infty$-categories. %The wide subcategories of $\AinftyCat$ (resp. $\suAinftyCat$) spanned by the strict morphisms will be denoted $\sAinftyCat$ (resp. $\ssuAinftyCat$).
\end{Not}

\begin{comment}
\par It is well known that there are more efficient ways of packaging the data of an $A_\infty$-algebra. Let $A\in \Gr \K$ be a $\mathbb{Z}$-graded vector space. Then its reduced tensor coalgebra, $\overline{T}^c(A)=\left(\overline{T}(A), \Delta \right)$, is a non-unital comonoid object in $\Gr \K$ given by 
\begin{equation}
\label{Eq:redTensCoalg}
\overline{T}(A)=\bigoplus_{k\geq 1} A^{\otimes k}\text{ and }\Delta \circ \iota_n = \sum_{p+q=n} \iota_p \otimes \iota_q
\end{equation}
\noindent where $\iota_n: A^{\otimes n} \to \overline{T}(A)$ is the canonical inclusion. The suspension of $A$, $A\otimes \K[1]$, is denoted by $sA$.
\begin{Thm}
Let $A\in \Gr \K$. Then there is a bijection between the set of $A_\infty$-algebra structures on $A$ and the set of degree one coderivations $b: \overline{T}^c(sA)\to \overline{T}^c(sA)$ such that $b^2=0$. Moreover, given $A_\infty$-algebras $\aaa= (\overline{T}^c(sA), b)$ and $\bbb=(\overline{T}^c(sB), b')$, there is a bijection between the set of $A_\infty$-functors $\aaa \to \bbb$ and the set of degree 0 comonoid maps $h: \overline{T}^c(sA)\to \overline{T}^c(sB)$ such that $h\circ b=b'\circ h$.
\end{Thm}
\begin{Rem}
One can write similar characterizations of $A_\infty$-categories by making use of graded $\K$-quivers.
\end{Rem}
\end{comment}

\subsubsection{The simplicial $A_\infty$-nerve}
\label{Subsubsec:SimpAooNerve}
\par In this subsection we recall Faonte's simplicial $A_\infty$-nerve. This nerve is generated by a cosimplicial $A_\infty$-category 
$$
A_\infty{[\Delta^{(-)}]}: \Delta \to dg\Cat\to \suAinftyCat 
$$
of which we will describe the values on objects. For any $n\geq 0$, $A_\infty{[\Delta^n]}$ is the dg-category with objects $\{0,...,n\}$ and hom-chain complexes concentrated in degree $0$:
$$
A_\infty{[\Delta^{n}]}(i,j)=\begin{cases}
    \K\cdot (i,j) & i \leq j\\
    0 & i > j\\
\end{cases}
$$
The multiplication in $A_{\infty}[\Delta^{n}]$ is determined by
$$
m((i,j)\otimes (j,k))=(i,k)\quad\text{ for all }0\leq i\leq j \leq k\leq n
$$
\begin{Def}
\label{Def:SimplAooNerve}
The \emph{simplicial $A_\infty$-nerve} is the functor 
\[
\begin{tikzcd}[row sep=0]
    \suAinftyCat \arrow{r}{N^{A_\infty}}& \SSet\\
    \aaa \arrow[mapsto]{r}{}& \left({[n]}\mapsto \Hom_{\suAinftyCat}(A_\infty[\Delta^n], \aaa)\right)
\end{tikzcd}
\]
\end{Def}
\par Let $\aaa$ be an $A_\infty$-category and $n\geq 0$. An element in $N^{A_\infty}(\aaa)_n$ is given by the following data
%We wish to describe explicitly the simplicial set $N^{A_\infty}(\aaa)$. 
\begin{enumerate}[(a)]
    \item $n+1$ objects $x_0,...,x_n\in \Ob(\aaa)$;
    \item for any sequence $0\leq i_0 < \cdots < i_k \leq n$, an element
    \[
    a_{i_0,...,i_k}\in \aaa_{k-1}(x_{i_0}, x_{i_k})
    \]
\end{enumerate}
\noindent such that 
\begin{align}
\begin{split}
\label{Eq:AooNerve}
\partial(a_{i_0,...,i_k})=\sum_{j=1}^{k-1}&(-1)^{j-1} a_{i_0,...,\not i_j,...,i_k} -\\&- \!\!\!\sum_{\substack{2\leq r \leq j\\0 < j_1 < \cdots < j_{r-1} < k}}\!\!\!\!\!\!\!\!\! (-1)^{\epsilon_c(j_1,...,j_{r-1})} m_r(a_{i_0,...,i_{j_1}}\otimes \cdots \otimes a_{i_{j_{r-1}},..., i_k})
\end{split}
\end{align}
where $\epsilon_c(j_{1},\dots ,j_{l-1}) = \sum_{j=2}^{l}(j-1)(k_{j} - k_{j-1}-1) = k_{1} + \dots + k_{l-1} -  \frac{(l-1)l}{2}$. We refer to \cite{faonte2017simplicial} for explicit descriptions of the actions of the simplicial maps and record here two relevant properties of the simplicial $A_\infty$-nerve.
\begin{comment}
    \noindent We will abbreviate such a collection to $\underline{a}$. The simplicial maps are defined as follows: the face map $d_k: N^{A_\infty}(A)_n\to N^{A_\infty}(A)_{n-1}$ is given in components by
\begin{equation}
d_k(\underline{a})_{i_0,...,i_j}=\begin{cases}
    a_{i_0,..,i_{p-1},i_p+1,...,i_j+1} & k\leq i_p\text{, }0\leq p \leq j\\
    a_{i_0,...,i_j} & k>i_j
\end{cases}
\end{equation}
\noindent where $0\leq i_0<\cdots <i_j\leq n-1$. The map $s_k: N^{A_\infty}(A)_n\to N^{A_\infty}(A)_{n+1}$ is given in components by
\begin{equation}
s_k(\underline{a})_{i_0,i_1}=\begin{cases}
        a_{i_0-1,i_1-1} & k\leq i_0-1\\
        a_{i_0,i_1-1} & i_0<k<i_1-1\\
        1_{x_{i_0}} & k=i_0, i_1=k+1\\
        a_{1_0,i_1} & k\leq i_1
    \end{cases}
\end{equation}
\noindent whenever $0\leq i_0<i_1\leq n+1$ and 
\begin{equation}
s_k(\underline{a})_{i_0,...,i_j}=\begin{cases}
    a_{i_0-1,...,i_j-1} & k<i_0\\
    a_{i_0,...,i_p,i_{p+1}-1,...,i_j-1} & i_p<k\leq i_{p+1}\text{, } 0< p < j \\
    0 & i_p=k, i_{p+1}=k+1\\
    a_{i_0,...,i_j} & k\leq i_j
\end{cases}
\end{equation}
\noindent whenever $j\geq 2$ and whenever $0\leq i_0<\cdots <i_j\leq n+1$.
\end{comment}

\begin{Thm}[Cor 2.10, \cite{faonte2017simplicial}]
Let $\aaa$ be an $A_\infty$-category. Then $N^{A_\infty}(\aaa)$ is a quasi-category.
\end{Thm}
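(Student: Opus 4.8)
The plan is to show that $N^{A_\infty}(\aaa)$ satisfies the inner horn filling condition: every map $\Lambda^n_i \to N^{A_\infty}(\aaa)$ with $0 < i < n$ extends to $\Delta^n \to N^{A_\infty}(\aaa)$. By adjunction, a map $\Lambda^n_i \to N^{A_\infty}(\aaa)$ is the same as an $A_\infty$-functor $A_\infty[\Lambda^n_i] \to \aaa$, where $A_\infty[\Lambda^n_i]$ denotes the value at the horn of the cocontinuous extension of the cosimplicial $A_\infty$-category $A_\infty[\Delta^{(-)}]$ (equivalently, the colimit over the simplices of the horn). Concretely, the data of such a horn is the same as the data (a), (b) of a simplex subject to the relations \eqref{Eq:AooNerve}, except that the element $a_{0,1,\dots,n}$ indexed by the full vertex set is \emph{not} specified, and the relations \eqref{Eq:AooNerve} involving it are therefore only required for the proper faces. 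Filling the horn then amounts to producing a single element $a_{0,\dots,n} \in \aaa_{n-1}(x_0,x_n)$ whose boundary $\partial(a_{0,\dots,n})$ equals the prescribed right-hand side of \eqref{Eq:AooNerve} for the top simplex, and such that no further relations are imposed.

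First I would observe that, because $0 < i < n$, both vertices $0$ and $n$ of $[n]$ lie in the horn, so all the faces $a_{0,\dots,\widehat{j},\dots,n}$ for $j \neq i$ and all the multiplication terms $m_r(a_{\dots} \otimes \cdots \otimes a_{\dots})$ appearing on the right-hand side of \eqref{Eq:AooNerve} for the top simplex are already defined from the horn data — every index string involved omits at least one of the intermediate vertices, hence determines a proper subsimplex of $\Delta^n$ lying in $\Lambda^n_i$. Call the whole right-hand side $z \in \aaa_{n-2}(x_0,x_n)$ (the missing face $a_{0,\dots,\widehat{i},\dots,n}$, which is present, contributes $(-1)^{i-1}a_{0,\dots,\widehat{i},\dots,n}$). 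The key step is to verify that $z$ is a cycle, $\partial(z) = 0$; this is a formal consequence of the $A_\infty$-relations \eqref{Eq:AooRelation} for $\aaa$ together with the relations \eqref{Eq:AooNerve} already known to hold for all proper subsimplices — it is precisely the computation showing that $\partial^2 = 0$ would be forced if $a_{0,\dots,n}$ existed, rerun with $a_{0,\dots,n}$ absent. Since each $\aaa_\bullet(x_0,x_n)$ is merely a chain complex, being a cycle does not suffice to be a boundary; however, one does not need $z$ itself to be a boundary. Instead I would use the \emph{strict unitality} of $\aaa$: choose $a_{0,\dots,n}$ by a contracting-homotopy-type formula. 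Concretely, following Faonte's argument (and Lurie's for the dg-nerve), one uses the unit $1_{x_i}$ at the filled-in vertex $i$ together with a higher multiplication to write down an explicit primitive. The homotopy $h$ defined by $h(b) = m_2(b \otimes 1_{x_n})$ or, more precisely, the degenerate-simplex trick — inserting $1_{x_i}$ — exhibits the relevant subcomplex as contractible in the appropriate range, and the bracket relations \eqref{eq:AooUnit}, \eqref{eq:AooFunctorUnit} ensure the unit terms vanish where they must and contribute an identity where needed.

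Concretely, I would set $a_{0,\dots,n}$ to be (up to sign) $m_{n}\big(a_{0,1}\otimes a_{1,2}\otimes\cdots\big)$-type corrections plus a term built from $a_{0,\dots,\widehat{i},\dots,n}$ and $1_{x_i}$ via $m_3$ (or the relevant $m_{k}$), mimicking Faonte's explicit filler; then one checks directly, using \eqref{Eq:AooRelation} and \eqref{eq:AooUnit}, that $\partial(a_{0,\dots,n}) = z$. One must also check that the relations \eqref{Eq:AooNerve} for all \emph{other} proper faces of $\Delta^n$ that were not part of the horn data — there are none, since $\Lambda^n_i$ contains every proper face except the $i$-th — so in fact the only relation to verify is the top one, giving $\partial(a_{0,\dots,n}) = z$ together with consistency of the lower relations, which hold automatically. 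Finally, uniqueness is not claimed; any such filler works, and the needed signs are bookkeeping governed by $\epsilon_c$.

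I expect the main obstacle to be the sign computation verifying $\partial(z)=0$ and then $\partial(a_{0,\dots,n}) = z$: one must carefully reconcile the three families of signs $(-1)^{r+st}$ from \eqref{Eq:AooRelation}, $(-1)^{j-1}$ and $(-1)^{\epsilon_c}$ from \eqref{Eq:AooNerve}, and the signs coming from the symmetry $\tau_n$ in the multiplicative convention of this paper, which differs from Faonte's compositional convention. The conceptual content — contractibility of the relevant complex via the unit at vertex $i$ — is exactly as in \cite{faonte2017simplicial}, so the real work is transporting that argument across the notational change; I would organize the signs once and for all in a preliminary lemma so that \eqref{Eq:AooNerve} can be manipulated mechanically.
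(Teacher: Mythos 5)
This statement is quoted by the paper from \cite{faonte2017simplicial} without proof, so there is no in-paper argument to compare against; judging your proposal on its own terms, it contains a genuine error at its core.

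The gap is in the identification of the horn data. The inner horn $\Lambda^n_i$ is the union of the faces $d_j\Delta^{n-1}$ for $j\neq i$; its nondegenerate simplices correspond to the subsets $S\subseteq\{0,\dots,n\}$ \emph{other than} $[n]$ itself \emph{and} $[n]\setminus\{i\}$. So a map $\Lambda^n_i\to N^{A_\infty}(\aaa)$ determines $a_S$ for all $S$ except \emph{two}: the top element $a_{0,\dots,n}$ and the $i$-th face $a_{0,\dots,\widehat{i},\dots,n}$. Your proposal asserts that $a_{0,\dots,\widehat{i},\dots,n}$ ``is present'' and that only $a_{0,\dots,n}$ must be produced (and is internally inconsistent, since later you say $\Lambda^n_i$ contains every proper face except the $i$-th). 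With the $i$-th face prescribed you are in effect trying to fill $\partial\Delta^n\hookrightarrow\Delta^n$, and that problem has no solution in general: the right-hand side $z$ of \eqref{Eq:AooNerve} is then a fixed cycle in $\aaa_{n-2}(x_0,x_n)$, and these hom-complexes are arbitrary chain complexes with arbitrary homology. The patch you propose — a ``contracting-homotopy-type formula'' built from the strict unit $1_{x_i}$ — cannot work: the unit conditions \eqref{eq:AooUnit} do not make $\aaa_{\bullet}(x_0,x_n)$ contractible, and if such a primitive always existed the nerve would fill every sphere, which is false. Strict unitality is needed only to define the degeneracies of $N^{A_\infty}(\aaa)$ (and, in other arguments, for special outer horns); it plays no role in inner-horn filling.

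The correct mechanism, which is Faonte's (and Lurie's for the dg-nerve), exploits the freedom in the \emph{second} unknown. Choose $a_{0,\dots,n}$ arbitrarily (say $0$), and \emph{define} $a_{0,\dots,\widehat{i},\dots,n}$ by solving the relation \eqref{Eq:AooNerve} for the full index string $[n]$ for the term $(-1)^{i-1}a_{0,\dots,\widehat{i},\dots,n}$; every other term on the right-hand side involves a subset missing some inner vertex $j\neq i$ and is therefore horn data. What then remains — and this is the actual content of the proof — is to verify that this $a_{0,\dots,\widehat{i},\dots,n}$ satisfies its own relation \eqref{Eq:AooNerve} for the index string $[n]\setminus\{i\}$; this is the $\partial^2=0$-type computation you allude to, using the $A_\infty$-relations \eqref{Eq:AooRelation} and the relations already satisfied by the lower-dimensional data. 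Your write-up identifies the right ingredients for that verification but deploys them on the wrong lifting problem, so as it stands the argument does not go through.
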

\begin{Thm}[Prop 2.15, \cite{faonte2017simplicial}]
Let $\mathcal{C}_{\bullet}$ be a dg-category considered as an $A_\infty$-category as in Example \ref{Ex:dgCats}. Then $N^{A_\infty}(\mathcal{C})\cong N^{dg}(\mathcal{C})$.
\end{Thm}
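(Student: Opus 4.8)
The plan is to produce the isomorphism level by level, by placing the two descriptions of an $n$-simplex side by side. Fix $n\ge 0$. By the unwinding of Definition~\ref{Def:SimplAooNerve} recorded after it, an $n$-simplex of $N^{A_\infty}(\mathcal{C})$ consists of objects $x_0,\dots,x_n\in\Ob(\mathcal{C})$ together with an element $a_{i_0,\dots,i_k}\in\mathcal{C}_{k-1}(x_{i_0},x_{i_k})$ for every subset $\{i_0<\dots<i_k\}\subseteq[n]$ of size $k+1\ge 2$, subject to \eqref{Eq:AooNerve} (equivalently, this is \eqref{Eq:AooMorRel} specialized to $A_\infty$-functors $A_\infty[\Delta^n]\to\mathcal{C}$, the source being the $\K$-linearization of the poset $[n]$). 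Since $\mathcal{C}$ is a dg-category viewed as in Example~\ref{Ex:dgCats}, one has $m_r=0$ for $r>2$, so the inner sum of \eqref{Eq:AooNerve} collapses to its $r=2$ part and the relation reads
\[
\partial(a_{i_0,\dots,i_k})=\sum_{j=1}^{k-1}(-1)^{j-1}\Bigl(a_{i_0,\dots,\widehat{i_j},\dots,i_k}-m\bigl(a_{i_0,\dots,i_j}\otimes a_{i_j,\dots,i_k}\bigr)\Bigr).
\]
An $n$-simplex of $N^{dg}(\mathcal{C})$ \cite{lurie2016higher} is exactly this kind of data, subject to the analogous relation of Lurie's dg-nerve, so the first step is just to record these two descriptions.

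I would then define the comparison map $\Phi_n\colon N^{A_\infty}(\mathcal{C})_n\to N^{dg}(\mathcal{C})_n$ as the identity on objects and $a_{i_0,\dots,i_k}\mapsto \varepsilon(k)\,a_{i_0,\dots,i_k}$, where $\varepsilon(k)\in\{\pm1\}$ depends only on the size $k+1$ of the index set. The sign $\varepsilon$ is pinned down by two requirements: $\varepsilon(1)=+1$ (so that edges, hence the identities appearing in degeneracies, are matched without a sign), and the face-deletion terms of the two relations must correspond, which forces the ratio $\varepsilon(k-1)/\varepsilon(k)$ to be a definite sign depending only on $k$ and hence yields a closed formula for $\varepsilon$. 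With $\varepsilon$ so determined, the key verification is that the composition terms also correspond: rewriting $m(a_{i_0,\dots,i_j}\otimes a_{i_j,\dots,i_k})$ in the compositional notation of \cite{faonte2017simplicial} and \cite{lurie2016higher} produces the Koszul sign $(-1)^{(j-1)(k-j-1)}$ of the symmetry $\tau_2$ (because $m=\tilde m_2\tau_2$), and one must check that the resulting total sign on $a_{i_j,\dots,i_k}\circ a_{i_0,\dots,i_j}$ agrees with $\varepsilon(k-j)\,\varepsilon(j)$ times the sign dictated by the dg-nerve relation. This reduces to a parity identity, which follows from $\binom{a+b}{2}=\binom{a}{2}+\binom{b}{2}+ab$.

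Finally I would check that the $\Phi_n$ constitute a simplicial isomorphism. Face maps on either nerve act by reindexing the data along the coface maps $[n-1]\hookrightarrow[n]$, which preserves the size of each index set, so $\Phi$ is strictly compatible with faces; degeneracy maps reindex along the codegeneracies $[n+1]\twoheadrightarrow[n]$, placing $1_{x_i}$ on a collapsed edge and $0$ on any larger collapsed simplex, in the same way on both sides, so compatibility with $\Phi$ uses exactly $\varepsilon(1)=+1$. Each $\Phi_n$ is a bijection (with inverse again multiplication by $\varepsilon$), and since $\varepsilon$ is a scalar it commutes with the post-composition action of dg-functors on both nerves, so $\Phi$ is moreover natural in $\mathcal{C}$. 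This gives $N^{A_\infty}(\mathcal{C})\cong N^{dg}(\mathcal{C})$.

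The main obstacle is the sign bookkeeping of the second step: because \eqref{Eq:AooNerve} is written multiplicatively with the $\tau$-twisted signs of the present paper whereas \cite{faonte2017simplicial} and \cite{lurie2016higher} use compositional notation (and Lurie moreover lists the interior of a subset in decreasing order), the two relations do not coincide on the nose, and the nontrivial cardinality-dependent correction $\varepsilon$ together with the parity identity above is precisely what reconciles them. Everything else is a routine unravelling of the definitions.
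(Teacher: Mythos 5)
This statement is recalled from \cite{faonte2017simplicial} (Prop.\ 2.15) and the paper gives no proof of its own, so the only meaningful comparison is with Faonte's argument, which your proposal essentially reconstructs: identify the simplex data of both nerves, observe that $m_r=0$ for $r>2$ collapses relation \eqref{Eq:AooNerve} to its $r=2$ part, and reconcile the remaining discrepancy of conventions by a cardinality-dependent sign, checked against the simplicial structure maps. Your plan is correct and the parity identity $\binom{a+b}{2}=\binom{a}{2}+\binom{b}{2}+ab$ is indeed the crux of the sign verification; the only thing left implicit is the closed formula for $\varepsilon(k)$, which your recursion determines uniquely, so this is a gap of bookkeeping rather than of ideas.
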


\subsection{The category of necklaces}
\par Let $\SSet_{*,*} = \SSet_{\partial\Delta^{1}/}$ denote the category of bipointed simplicial sets. The category $\nec$ is the full subcategory of $\SSet_{*,*}$ spanned by all \emph{necklaces}, that is sequences $\Delta^{n_{1}}\vee ...\vee \Delta^{n_{k}}$ of standard simplices, called \emph{beads}, which are glued at their endpoints (where $\vee$ denotes the wedge sum). Necklaces first appeared in \cite{baues1980geometry} (under a different name) and were later employed and popularised in \cite{dugger2011rigidification}.
\par Let $\simp$ be the simplex category, which consists of the finite ordinals $[n] = \{0,...,n\}$ for $n\geq 0$ as objects with order preserving maps as morphisms. We will make use of the finite interval category $\fint$, which is the subcategory of $\simp$ with the same objects and containing those morphisms $f: [m]\rightarrow [n]$ that preserve the endpoints, that is, $f(0) = 0$ and $f(m) = n$. The category $\fint$ is strict monoidal for $[n] + [m] =[n+m]$. It was shown independently in \cite[Proposition 3.4.2]{grady2023extended} and \cite[Proposition 3.4]{lowen2023frobenius} that $\nec$ admits the following combinatorial description:
\begin{itemize}
\item The objects of $\nec$ are all pairs $(T,p)$ with $p\geq 0$ an integer and $T\subseteq [p]$ a subset containing $\{0 < p\}$.
\item A morphism $(T,p)\rightarrow (U,q)$ in $\nec$ is a morphism $f: [p]\rightarrow [q]$ in $\fint$ such that $U\subseteq f(T)$.
\end{itemize}
Here, a subset $T = \{0 = t_{0} < t_{1} < ... < t_{k} = p\}$ of $[p]$ corresponds to the necklace $\Delta^{t_{1}}\vee \Delta^{t_{2}-t_{1}}\vee ...\vee \Delta^{p-t_{k-1}}$. For example, $T = \{0 < p\}$ is the single simplex $\Delta^{p}$ while $T = [p]$ is a sequence of edges. Then the wedge sum $\vee$ is given as follows:
$$
(T,p)\vee (U,q) = (T\cup (p + U), p+q)
$$
for any necklaces $(T,p)$ and $(U,q)$. This makes $\nec$ into a monoidal category with monoidal unit given by $(\{0\},0)$.
\begin{Def}
Let $T = \{0 = t_{0} < t_{1} < \dots < t_{k} = p\}$ be a necklace. In other words, $T = \Delta^{n_{1}}\vee \dots \vee \Delta^{n_{k}}$ with $n_{i} = t_{i} - t_{i-1}$. Then
\begin{itemize}
    \item the \emph{bead length} of $T$ is $\ell(T) = k$;
    \item the \emph{spine lenght} of $T$ is $\Vert T\Vert = p = n_{1} + \dots + n_{l}$;
    \item the \emph{dimension} of $T$ is $\dim(T) = \Vert T\Vert - \ell(T)$;
    \item the \emph{signature} of $T$ is $\epsilon(T) = \epsilon_{g}(n_{1},\dots,n_{k}) = \epsilon_{c}(t_{1},...,t_{k-1})$
\end{itemize}
\end{Def}

It will be useful to consider some particular classes of necklace maps.
We call a necklace map $f: (T,p)\rightarrow (U,q)$ \emph{inert} if the underlying morphism $f: [p]\rightarrow [q]$ in $\fint$ is the identity, and we call $f$ \emph{active} if $f(T) = U$. Every necklace map can be uniquely decomposed as an active map followed by an inert map, as shown bellow
\begin{equation*}
    (T,p) \xrightarrow{f^{\ac}} (f(T),q)\xrightarrow{f^{\ine}} (U,q) 
\end{equation*}
Further, we call $f$ \emph{injective} (resp. \emph{surjective}) if the underlying morphism $f: [p]\rightarrow [q]$ in $\fint$ is injective (resp. surjective). 

It was shown in \cite[Lemmas 3.5 and 3.6]{BMM2024necklaces} that a necklace map is inert if and only if it is the wedge $\nu_i\vee\cdots\vee \nu_k$ of the inclusions
\[
\nu_i: \Delta^{n_i^1}\vee\cdots\vee\Delta^{n_i^{l_i}}\hookrightarrow\Delta^{n_i^1+\cdots+{n_i^{l_i}}}
\]
and a necklace map is active if and only if it is the wedge $f_1\vee\cdots\vee f_k$ of necklace maps $f_i: \Delta^{n_i}\to \Delta^{m_i}$ induced by morphisms $[n_{i}]\rightarrow [m_{i}]$ in $\fint$.

\begin{Prop}
\label{Prop:Factorization}[\cite{BMM2024necklaces}]
The (epi,mono) factorization system on $\SSet_{*,*}$ restricts to $\nec$. The epimorphisms are precisely the active surjective necklace maps and the monomorphisms are precisely the injective necklace maps.
\end{Prop}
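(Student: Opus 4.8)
The plan is to argue entirely within the combinatorial model of $\nec$, using an explicit description of the simplicial set underlying a necklace. First I would record the basic fact that, for $(T,p) = \Delta^{n_{1}}\vee\dots\vee\Delta^{n_{k}}$ with joints $0 = t_{0} < t_{1} < \dots < t_{k} = p$, the $n$-simplices of $(T,p)$ are exactly the order preserving maps $\phi\colon [n]\to [p]$ whose image lies in a single bead $[t_{i-1},t_{i}]$, and that a necklace map $f\colon (T,p)\to (U,q)$ acts by $\phi \mapsto f\circ\phi$. A useful consequence, valid for \emph{every} necklace map, is that $f$ carries each bead of $T$ into a bead of $U$: a joint of $U$ lying strictly between $f(t_{i-1})$ and $f(t_{i})$ would, by $U\subseteq f(T)$ and monotonicity, be $f(t')$ for some $t_{i-1} < t' < t_{i}$, which is impossible since $t_{i-1},t_{i}$ are consecutive joints of $T$. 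In particular $\phi\mapsto f\circ\phi$ is a well-defined levelwise map.

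For the monomorphism claim I would observe: if the underlying map $f\colon [p]\to [q]$ in $\fint$ is injective, then $\phi\mapsto f\circ\phi$ is injective in each degree, so $f$ is a levelwise monomorphism, hence a monomorphism in $\SSet_{*,*}$. Conversely, a monomorphism in $\SSet_{*,*}$ is injective on $0$-simplices; since every vertex of $[p]$ lies in some bead, the $0$-simplices of $(T,p)$ are all of $[p]$, so $f\colon [p]\to [q]$ is injective. Thus the monomorphisms of $\SSet_{*,*}$ that are necklace maps are precisely the injective ones.

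For the epimorphism claim I would use the description of active maps recalled before the statement: an active necklace map is a wedge $f_{1}\vee\dots\vee f_{k}$ of maps $\Delta^{n_{i}}\to\Delta^{m_{i}}$ induced by morphisms $[n_{i}]\to[m_{i}]$ in $\fint$, and it is surjective precisely when each of these is. A surjection of finite ordinals has a section, so each $\Delta^{n_{i}}\to\Delta^{m_{i}}$ is a split epimorphism of simplicial sets, and wedge sums of epimorphisms are epimorphisms; hence every active surjective necklace map is an epimorphism in $\SSet_{*,*}$. Conversely, an epimorphism in $\SSet_{*,*}$ is levelwise surjective: surjectivity on $0$-simplices gives that $f\colon [p]\to[q]$ is surjective, and surjectivity on $1$-simplices forces $f$ to be active --- if some joint $t_{i}$ had $f(t_{i})\notin U$, say $u_{j-1} < f(t_{i}) < u_{j}$ for a bead $[u_{j-1},u_{j}]$ of $U$, then by monotonicity any $v,v'$ with $f(v) = u_{j-1}$ and $f(v') = u_{j}$ satisfy $v < t_{i} < v'$, so $\{v,v'\}$ is not a simplex of $(T,p)$ and the non-degenerate edge $\{u_{j-1},u_{j}\}$ of $(U,q)$ is not hit, a contradiction. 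So the epimorphisms of $\SSet_{*,*}$ that are necklace maps are precisely the active surjective ones.

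Finally, to see that the factorization system restricts, I would take a necklace map $f\colon (T,p)\to (U,q)$, factor the underlying map in $\fint$ as $[p]\xrightarrow{g}[r]\xrightarrow{h}[q]$ with $g$ surjective and $h$ injective (both endpoint preserving since $f$ is), and put $V = g(T)$; then $0,r\in V$, so $(V,r)$ is a necklace, $g\colon (T,p)\to (V,r)$ is active and surjective, and $h\colon (V,r)\to (U,q)$ is a well-defined injective necklace map because $U\subseteq f(T) = h(g(T)) = h(V)$. By the previous two steps $h\circ g = f$ exhibits $f$ as an epimorphism followed by a monomorphism in $\SSet_{*,*}$; by uniqueness of such factorizations it coincides with the image factorization of $f$, so the image of any necklace map is again a necklace, the $(\mathrm{epi},\mathrm{mono})$ factorization system restricts to $\nec$, and its two classes there are exactly the active surjective, resp.\ injective, necklace maps. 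The step I expect to demand the most care is the initial bookkeeping --- pinning down the simplices of a necklace and its bead-to-bead behaviour precisely enough to run the levelwise (in)surjectivity arguments, in particular the detection of non-degenerate edges of the target by preimage pairs lying in a common bead of the source; everything else is then routine.
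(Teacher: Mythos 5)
Your argument is correct. Note that the paper itself offers no proof of this proposition; it is quoted from \cite{BMM2024necklaces}, so there is no in-text argument to compare against. Your self-contained verification is sound: the identification of the $n$-simplices of $(T,p)$ with order-preserving maps $[n]\to[p]$ landing in a single bead is right, the bead-to-bead observation correctly shows levelwise well-definedness, the detection of injectivity on vertices and of activity via the missed edge $\{u_{j-1},u_j\}$ both work, and the restriction of the factorization follows from the surjective--injective factorization in $\fint$ together with $V=g(T)$ being a necklace. The only step you assert without justification is that epimorphisms in $\SSet_{*,*}=\SSet_{\partial\Delta^1/}$ are levelwise surjective; since a coslice of a topos need not be a topos, this deserves a sentence, e.g.\ the cokernel-pair argument: if $A=\operatorname{im}(f)\subsetneq Y$, the two inclusions $Y\rightrightarrows Y\sqcup_{A}Y$ agree on $A$, hence on the basepoints (which lie in $A$ because $f$ is a map under $\partial\Delta^1$), so they are distinct parallel maps in $\SSet_{*,*}$ equalized by $f$. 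With that remark added, the proof is complete.
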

\par We denote the wide subcategory of $\nec$ spanned by the injective (resp. active surjective) maps by $\nec_{+}$ (resp. $\nec_{-}$).
\begin{Def}
\label{Def:SpineCollapsing}
Let $f = f_{1}\cvee f_k$ be an active surjective necklace map with $f_i: \Delta^{n_{i}}\to \Delta^{m_{i}}$. The map $f$ is \emph{bead reducing} if $m_i\geq 1$ for all $1\leq i \leq k$ and \emph{spine collapsing} if for all $1\leq i \leq k$, $f_i=\id_{\Delta^{n_{i}}}$ or $f_i: \Delta^{1}\rightarrow \Delta^{0}$.
\end{Def}

\begin{Lem}[\cite{BMM2024necklaces}]
\label{Lem:SpineCollandDim}
\begin{enumerate}[1.]
    \item Let $f: X \to Y$ be an active surjective necklace map. Then $\dim(X)\geq \dim(Y)$ and there is equality if and only if $f$ is spine collapsing.
    \item The injective maps in $\nec$ are direct divisible with respect to $\vee$. That is, for any injective map $g: U\hookrightarrow T_{1}\vee T_{2}$, there exists unique injective maps $g_{i}: U_{i}\hookrightarrow T_{i}$ for $i\in \{1,2\}$ such that $g = g_{1}\vee g_{2}$.
\end{enumerate}

\end{Lem}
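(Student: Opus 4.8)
The plan is to deduce both statements from the combinatorial model of $\nec$ together with the beadwise descriptions of active and injective necklace maps recalled above; neither part should require more than careful bookkeeping.

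\textbf{Part 1.} I would begin by writing $X = \Delta^{n_{1}}\vee\dots\vee\Delta^{n_{k}}$ for the canonical bead decomposition of $X$, so that $k = \ell(X)$ and every $n_{i}\geq 1$. By the characterization of active maps, $f = f_{1}\cvee f_{k}$ with $f_{i}\colon \Delta^{n_{i}}\to\Delta^{m_{i}}$ induced by an order-preserving surjection $[n_{i}]\twoheadrightarrow[m_{i}]$ in $\fint$, where $0\leq m_{i}\leq n_{i}$, and $Y = \Delta^{m_{1}}\vee\dots\vee\Delta^{m_{k}}$ with the beads having $m_{i}=0$ silently deleted. Hence $\dim(X) = \sum_{i}(n_{i}-1)$ and $\dim(Y) = \sum_{i}\max(m_{i}-1,0)$, so
\[
\dim(X) - \dim(Y) = \sum_{i=1}^{k}\big[(n_{i}-1)-\max(m_{i}-1,0)\big].
\]
Each summand is nonnegative: it equals $n_{i}-m_{i}\geq 0$ when $m_{i}\geq 1$ and $n_{i}-1\geq 0$ when $m_{i}=0$. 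This gives $\dim(X)\geq\dim(Y)$, and equality forces every summand to vanish, i.e.\ for each $i$ either $n_{i}=m_{i}$ — in which case $f_{i}$ is a surjection of ordinals of equal size, hence the identity — or $n_{i}=1$ and $m_{i}=0$, in which case $f_{i}\colon\Delta^{1}\to\Delta^{0}$. That is precisely the definition of spine collapsing, and conversely a spine collapsing map visibly makes every summand zero.

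\textbf{Part 2.} Write $U = (V,p)$, $T_{1}=(A,a)$, $T_{2}=(B,b)$, so $T_{1}\vee T_{2}=(A\cup(a+B),\,a+b)$. The wedge point $a$ of $T_{1}\vee T_{2}$ lies in $A\cup(a+B)$, being $\max A$, hence in $g(V)$ by the defining inclusion of a necklace map, so there is a \emph{unique} $c\in V$ with $g(c)=a$. I would split $U$ at $c$: put $U_{1}=(V\cap[0,c],\,c)$ and $U_{2}=((V\cap[c,p])-c,\,p-c)$, so that $U_{1}\vee U_{2}=U$ since $c\in V$. As $g$ is order-preserving and injective with $g(c)=a$, it sends $[0,c]$ into $[0,a]$ and $[c,p]$ into $[a,a+b]$; define $g_{1}\colon U_{1}\to T_{1}$ and $g_{2}\colon U_{2}\to T_{2}$ as the restrictions of $g$, the latter shifted by $c$ on the source and by $a$ on the target. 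These are endpoint-preserving and injective, and the inclusions $A\subseteq g_{1}(V\cap[0,c])$, $B\subseteq g_{2}((V\cap[c,p])-c)$ follow from $A\cup(a+B)\subseteq g(V)$ together with the elementary observation that $g(v)\leq a$ forces $v\leq c$ while $g(v)\geq a$ forces $v\geq c$. So $g_{1},g_{2}$ are necklace maps with $g=g_{1}\vee g_{2}$. For uniqueness, in any factorization $g=g_{1}'\vee g_{2}'$ with $g_{i}'\colon U_{i}'\to T_{i}$ the wedge point $\Vert U_{1}'\Vert\in V$ of $U_{1}'\vee U_{2}'$ is sent by $g$ to the wedge point $a$ of $T_{1}\vee T_{2}$, so $\Vert U_{1}'\Vert=c$ by injectivity; this pins down $U_{1}',U_{2}'$ and then forces $g_{1}',g_{2}'$ to be the restrictions constructed above.

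\textbf{Main obstacle.} There is no difficulty of substance here; the one point that needs care is the bookkeeping in Part 1 when a bead collapses ($m_{i}=0$), since then $\ell(Y)<k$ and one must use $\dim(Y)=\sum_{i}\max(m_{i}-1,0)$ rather than a naive $\sum_{i}(m_{i}-1)$ — this is exactly what singles out the edge-collapse $\Delta^{1}\to\Delta^{0}$ as the boundary case in the equality statement. In Part 2 the only subtlety is checking that the split-off maps remain genuine necklace maps (the short order argument above) and the uniqueness, which both rest on the wedge point of the target having a unique $g$-preimage.
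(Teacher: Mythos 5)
Your proof is correct. Note that the paper itself gives no proof of this lemma --- it is imported from \cite{BMM2024necklaces} --- so there is nothing to compare against here; your argument is the natural direct verification from the combinatorial model of $\nec$ (beadwise dimension count for Part 1, splitting $U$ at the unique preimage of the wedge point for Part 2), and both the inequality/equality analysis and the existence--uniqueness argument check out.
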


Finally, let us introduce some new notation for the purposes of this paper.

\begin{Not}
Let $U = \Delta^{n_{1}}\vee \dots \vee \Delta^{n_{l}}$ be a necklace and $1\leq k\leq l$. Then we denote
$$
U^{< k} = \Delta^{n_{1}}\vee \dots \vee \Delta^{n_{k-1}}\quad \text{and}\quad U^{> k} = \Delta^{n_{k+1}}\vee \dots \vee \Delta^{n_{l}}
$$
Now given necklace maps $g: U\to S$ and $f: T\to \Delta^{n_k}$, we write
$$
g \circ_k f = g \circ (\id_{U^{<k}} \vee f \vee \id_{U^{> k}}): U^{< k}\vee T \vee U^{> k} \to S
$$
\end{Not}

\begin{Def}
Let $S=\Delta^{n_1}\vee \cdots \vee \Delta^{n_l}\in \nec$, $\mu: T \hookrightarrow S$ be an inert map. Note that there exist unique inert necklace maps $\mu_{i}: T_{i}\hookrightarrow \Delta^{n_{i}}$ for all $1\leq i\leq l$ such that $\mu = \mu_{1}\vee \dots \vee \mu_{l}$. The \emph{signature} of $\mu$ is defined as
\begin{equation*}
    \varphi(\mu)=\sum_{1\leq i < j \leq l}\dim(T_i)\left(\ell(T_j)-1\right)
\end{equation*}

Given $1\leq k\leq l$, then in particular when $\mu_{i} = \id_{\Delta^{n_{i}}}$ for $i\neq k$ and $\mu_{k} = \nu: T\hookrightarrow \Delta^{n_{k}}$, we write
$$
\varphi_{k}(\nu) = \varphi(\id^{\vee k-1}\vee \nu \vee \id^{\vee l-k}) = \dim(S^{<k})(\ell(T)-1)
$$
\end{Def}

\begin{Lem}
\label{Lem:MagicFormula}
 Let $S=\Delta^{n_1}\vee\cdots \vee  \Delta^{n_l}\in \nec$, $\mu: T \hookrightarrow S$ be an inert map and $T=T_1\vee \cdots \vee T_l$ the decomposition induced by $\mu$. Then 
 \begin{align*}
     \varphi(\mu)=&\epsilon(T) + \epsilon(S) -\epsilon(T_1)-\cdots - \epsilon(T_l) -\epsilon_{g}(\ell(T_1),...,\ell(T_2))
 \end{align*}
 In particular, for an inert map $\nu: W\hookrightarrow \Delta^{n_{k}}$ with $1\leq k\leq l$, we have
 $$
 \epsilon(V^{< k}\vee W \vee V^{> k}) = \epsilon(V) + \epsilon(W) + \varphi_{k}(\nu) -\left(\ell(W)-1\right)\ell(V^{> k})
 $$
\end{Lem}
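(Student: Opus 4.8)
The plan is to treat this as a purely combinatorial identity about bead–lengths, reducing every signature occurring in it to an explicit sum over ordered pairs of beads. I would first record two elementary observations. Directly from the definition $\epsilon=\epsilon_{g}$ and $\epsilon_{g}(i_{1},\dots,i_{r})=\sum_{l=1}^{r-1}(r-l)(i_{l}-1)$, the coefficient of $(i_{l}-1)$ is the number of indices exceeding $l$, so $\epsilon(\Delta^{n_{1}}\vee\dots\vee\Delta^{n_{k}})=\sum_{1\le i<j\le k}(n_{i}-1)$; equivalently, $\epsilon(A\vee B)=\epsilon(A)+\epsilon(B)+\ell(B)\dim(A)$ for any necklaces $A,B$. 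Secondly, $\dim$ is additive under $\vee$. I would also use that each $\mu_{i}\colon T_{i}\hookrightarrow\Delta^{n_{i}}$, being inert, has $\Vert T_{i}\Vert=n_{i}$, so that $\dim(T_{i})=n_{i}-\ell(T_{i})$ and hence $(n_{i}-1)-(\ell(T_{i})-1)=\dim(T_{i})$.

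Next I would split the double sum computing $\epsilon(T)$ over pairs of beads of $T=T_{1}\vee\dots\vee T_{l}$ according to which $T_{i}$ contains each bead, obtaining
\[
\epsilon(T)-\sum_{i=1}^{l}\epsilon(T_{i})=\sum_{1\le i<j\le l}\ell(T_{j})\dim(T_{i}).
\]
Applying the same closed form to $S$ and to the auxiliary necklace whose $i$-th bead has size $\ell(T_{i})$ gives $\epsilon(S)=\sum_{i<j}(n_{i}-1)$ and $\epsilon_{g}(\ell(T_{1}),\dots,\ell(T_{l}))=\sum_{i<j}(\ell(T_{i})-1)$, hence
\[
\epsilon(S)-\epsilon_{g}(\ell(T_{1}),\dots,\ell(T_{l}))=\sum_{1\le i<j\le l}\dim(T_{i}).
\]
Finally, expanding $\varphi(\mu)=\sum_{i<j}\dim(T_{i})(\ell(T_{j})-1)=\sum_{i<j}\ell(T_{j})\dim(T_{i})-\sum_{i<j}\dim(T_{i})$ and combining the three displays yields the formula. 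The two resulting expressions for $\varphi(\mu)$ differ only by $2\bigl(\epsilon(S)-\epsilon_{g}(\ell(T_{1}),\dots,\ell(T_{l}))\bigr)$, which is harmless since all of these signatures occur exclusively as exponents of $-1$, so the stated identity is to be read modulo $2$.

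For the ``in particular'' clause I would specialise the general formula to $\mu=\id^{\vee k-1}\vee\nu\vee\id^{\vee l-k}$ with $\nu\colon W\hookrightarrow\Delta^{n_{k}}$. Then $T_{i}=\Delta^{n_{i}}$ for $i\neq k$, so $\epsilon(T_{i})=0$ and $\sum_{i}\epsilon(T_{i})=\epsilon(W)$; moreover $\varphi(\mu)=\varphi_{k}(\nu)$, and $\epsilon_{g}(\ell(T_{1}),\dots,\ell(T_{l}))=(\ell(W)-1)\ell(V^{>k})$ because every bead–length other than the $k$-th equals $1$. Substituting and rearranging gives the displayed expression for $\epsilon(V^{<k}\vee W\vee V^{>k})$.

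I do not expect a genuine obstacle: the entire content is the bookkeeping of the nested sums over pairs of beads, and the only point to watch is keeping track of which signs are honest integers and which are meaningful only modulo $2$ (this is exactly what produces the $\pm$ ambiguity in the $\epsilon(S)$ and $\epsilon_{g}(\ell(T_{i}))$ terms). A slightly slicker alternative would be to induct on $l$, using $\epsilon(A\vee B)=\epsilon(A)+\epsilon(B)+\ell(B)\dim(A)$ together with additivity of $\dim$ as the inductive step and reducing to the two-bead case $S=\Delta^{n_{1}}\vee\Delta^{n_{2}}$; this replaces the explicit double sums by a cleaner recursion but carries out the same computation.
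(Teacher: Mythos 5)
Your proof is correct and is exactly the ``direct verification'' that the paper leaves to the reader: reduce every signature to the closed form $\epsilon(\Delta^{n_1}\vee\dots\vee\Delta^{n_k})=\sum_{i<j}(n_i-1)$ and compare the resulting double sums over pairs of beads. Your observation that the two sides of the general formula differ by $2\sum_{i<j}\dim(T_i)$, so that the identity holds only modulo $2$, is accurate and harmless (these quantities appear only as exponents of $-1$); note, though, that the ``in particular'' clause is in fact an exact integer identity, as one sees by expanding $\epsilon(V^{<k}\vee W\vee V^{>k})$ directly via $\epsilon(A\vee B)=\epsilon(A)+\epsilon(B)+\ell(B)\dim(A)$ rather than by specialising the mod-$2$ statement.
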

\begin{proof}
    This is a direct verification.
\end{proof}

\begin{comment}
    \par We finish this section with the notion of splitting, which will be crucial in subsequent calculations.
\begin{Def}
\label{Def:splitting}
Let $(T,p),(U,p)\in \nec$ with $p\geq 0$. Then there exist unique necklaces $(T_{i},u_{i}-u_{i-1})$ for $i\in \{1,...,l\}$ such that
$$
T\cup U = T_{1}\vee ...\vee T_{l}
$$
where we've written $U = \{0 = u_{0} < ... < u_{l} = p\}$. More precisely,
$$
T_{i} = \{0\}\cup \{t-u_{i-1}\mid t\in T, u_{i-1}\leq t\leq u_{i}\}\cup \{u_{i}-u_{i-1}\}
$$
We call the sequence
$$
(T_{1},...,T_{l})
$$
the \emph{splitting} of $T$ over $U$.
\end{Def}

The following proposition immediately follows from the definitions.

\begin{Prop}
\label{proposition: properties of splittings}
Let $p\geq 0$ and $(T,p),(U,p)\in \nec$. The following statements hold true.
\begin{enumerate}
\item\label{item: properties of splittings 1} For any necklace $(V,p)$ with $T\cup U = V\cup U$, the splitting of $T$ over $U$ is equal to the splitting of $V$ over $U$.
\item\label{item: properties of splittings 2} If $T\subseteq U$ with $(T_{1},...,T_{l})$ the splitting of $T$ over $U$ and $(U_{1},...,U_{k})$ the splitting of $U$ over $T$, then $\ell(T_{i}) = 1$ for all $i\in\{1,...,p\}$ and $U = U_{1}\vee ... \vee U_{k}$.
\end{enumerate}
\end{Prop}

\end{comment}

\subsection{Templicial objects in a monoidal category}
\par Templicial objects (see \S \ref{Subsec:temp}) in a suitable monoidal category $\vvv$ were introduced in \cite{lowen2024enriched} as an appropriate generaliszation of simplicial sets upon replacing $\Set$ by a not necessarily cartesian monoidal category $\vvv$. In this section, we will focus on the case of templicial vector spaces, that is, letting $\vvv$ be the category of vector spaces over the field $\K$. The case $\vvv = \Set$ recovers simplicial sets as templicial objects in $\Set$.
In \S \ref{parqcat}, we recall quasi-categories in vector spaces. These are the templicial vector spaces which satisfy a familiar horn lifting property. In order to express this property, one requires necklicial vector spaces (see \S \ref{parnec}). 

\subsubsection{Templicial vector spaces}\label{Subsec:temp}
Recall that a $\K$-linear category $\ccc$ with object set $O$ consists of a collection $(\ccc(a,b))_{a,b\in O}$ with $\ccc(a,b)\in \Vect(\K)$ endowed with multiplications
$$
\mathcal{C}(a,b)\otimes_\K \mathcal{C}(b,c)\longrightarrow \mathcal{C}(a,c)
$$
and units $\K \to \ccc(a,a)$ satisfying the usual associativity and unitality axioms.

On the other hand, given a set $O$, we refer to a collection $Q = (Q(a,b))_{a,b\in O}$ with $Q(a,b)\in \Vect(\K)$ as a \emph{$\K$-linear quiver} with $O$ its set of \emph{vertices}. A \emph{quiver morphism} $f: Q\rightarrow P$ is a collection $(f_{a,b})_{a,b\in O}$ of $\K$-linear maps $f_{a,b}: Q(a,b)\rightarrow P(a,b)$. We thus obtain a category
$$
\K\Quiv_{O}
$$
of $\K$-linear quivers with a fixed vertex set $O$. This category can be equipped with a monoidal structure $(\otimes_{O},\K_{O})$ such that a monoid in $\K\Quiv_{O}$ is precisely to a $\K$-linear category with object set $O$. We refer to \cite{lowen2024enriched} for more details. 
\begin{Def}\label{definition: temp. obj.}
A \emph{templicial vector space} is a pair $(X,O)$ with $O$ a set and
$$
X: \fint^{op}\longrightarrow \K\Quiv_{O}
$$
a strongly unital, colax monoidal functor. 
\end{Def}

\par We refer to loc. cit. for the definition of the category $\ts\Vect(\K)$ of templicial vector spaces with varying object sets.

\par There is a free-forgetful adjunction $\tilde{F}: \Set \rightleftarrows \Vect(\K): \tilde{U}$ induces an adjunction
\begin{equation}\label{eqtildeF}
	\tilde{F}_\K: \SSet \rightleftarrows \ts \Vect(\K): \Tilde{U}.
\end{equation}
We refer to $\Tilde{U}(X)$ for $X\in\ts\Vect(\K)$ as the \emph{underlying simplicial set} of $X$.

Let $(X,O)$ be a templicial vector space. The structure of $X$ as a functor $\fint^{op}\rightarrow \K\Quiv_{O}$ is equivalent to a collection of $\K$-quivers $(X_{n})_{n\geq 0}$ as well as $\K$-linear quiver morphisms
\begin{align*}
d_{j} 
: X_{n}\longrightarrow X_{n-1}\quad \text{for all integers }0 < j < n\\
s_{i} 
: X_{n}\longrightarrow X_{n+1}\quad \text{for all integers }0\leq i\leq n
\end{align*}
called the \emph{inner face morphisms} and \emph{degeneracy morphisms} respectively, which satisfy the usual simplicial identities. The colax monoidal structure of $X$ provides it with \emph{comultiplications}, which are quiver morphisms
$$
\mu_{k,l}: X_{k+l}\longrightarrow X_{k}\otimes_{O} X_{l}
$$
for all $k,l\geq 0$ satisfying coassociativity and naturality conditions. Further, $X$ has a \emph{counit} $\epsilon: X_{0}\xlongrightarrow{\cong} \K_{O}$, which is assumed to be an isomorphism by the strong unitality. Note that, contrary to a simplicial object, $X$ does not have any outer face morphisms $d_{0}, d_{n}: X_{n}\longrightarrow X_{n-1}$. Instead, $X$ is equipped with the comultiplications $\mu_{k,l}$ which serve as a replacement for the outer faces in the (non-cartesian) monoidal context.

We end this section with the main motivating example for using templicial vector spaces in order to model higher enriched categories.

\begin{Ex}\cite[\S 2.2]{lowen2024enriched}\label{exnerve}
Let $\ccc$ be a $\K$-linear category with object set $O$. The \emph{templicial nerve} of $\ccc$ is the templicial vector space $N_{\K}(\ccc)$  with
$$
N_{\K}(\ccc)_n = \ccc^{\otimes_O n},
$$
with inner face maps induced by the multipliations of $\ccc$, degeneracies induced by the units of $\ccc$, and comultiplications given by the canonical isomorphisms $\ccc^{\otimes_S k+l} \cong \ccc^{\otimes_S k} \otimes_O \ccc^{\otimes_S l}$.
\end{Ex}

\subsubsection{Necklicial objects}\label{parnec}
 \par Necklace maps allow us to treat the inner face maps and degeneracy maps of a templicial vector space $(X,O)$ on the one hand, and its comultiplications on the other hand, on the same footing. More concretely, given any necklace $T = \{0 = t_{0} < t_{1} < ... < t_{k} = p\}$, we set
$$
X_{T} = X_{t_{1}}\otimes_{O} X_{t_{2}-t_{1}}\otimes_{O} ...\otimes_{O} X_{p-t_{k-1}}\in \K\Quiv_{O}.
$$
This definition can be extended to a functor $X_{\bullet}: \nec^{op}\rightarrow \K\Quiv_{O}$ in which the active necklace maps parameterise the inner face maps and degeneracy maps of $X$ and the inert necklace maps parameterise its comultiplications (see \cite[Construction 3.9]{lowen2024enriched}).

In particular, we can evaluate in any $a,b\in S$ to obtain a functor
\begin{equation}\label{diagram: hom-object of necklace cat. assoc. to temp. obj.}
X_{\bullet}(a,b): \nec^{op}\longrightarrow \Vect(\K).
\end{equation}
In general, a functor $Y: \nec^{op}\rightarrow \Vect(\K)$ will be called a \emph{necklicial vector space}.  The category $\Fun(\necop, \Vect(\K))$ becomes monoidal for the Day convolution, and one may thus consider \emph{necklace categories}, that is categories enriched in $\Fun(\necop, \ \Vect(\K))$. We denote the category of necklace categories by $\K \Cat_{\nec}$. 
\par Given $T,S\in \nec$ and $a,b,c\in O$, the canonical maps 
\begin{equation*}
    X_T(a,b)\otimes_\K X_S(b,c) \to X_{T\vee S}(a,c)
\end{equation*}
\noindent induce compositions $X_\bullet(a,b) \otimes_{Day}X_\bullet(b,c)\to X_\bullet(a,c)$ and the isomorphisms $\K \cong X_0(a,a)=X_{\{0\}}(a,a)$ induce unit morphism $\underline{K} \to X_\bullet(a,a)$. This defines a necklace category $X^{nec}\in \K\Cat_{\nec}$ with object set $O$ and $X^{nec}(a,b)=X_{\bullet}(a,b)$ for all $a,b\in O$.

\begin{Prop}\cite[Theorem 3.12]{lowen2024enriched}
\label{Prop:tempnec}
The construction above defines a fully faithful functor 
\begin{equation*}
\label{eq:nec}
    (-)^{nec}: \ts\Vect(\K) \lra 
	\K \Cat_{\nec}
\end{equation*}
This functor is left adjoint to a functor 
	$(-)^{temp}:
	\K \Cat_{\nec} \lra \ts\Vect(\K)$ and its  essential image is spanned by those $\mathcal{C}\in \K\Cat_{\nec}$ with such that the quiver maps inducing composition
    \begin{equation*}
        \mathcal{C}_T \otimes_{\Ob \mathcal{C}} \mathcal{C}_S \to C_{T\vee S}\in \K\Quiv_{\Ob \mathcal{C}}
    \end{equation*}
    \noindent are isomorphisms.
\end{Prop}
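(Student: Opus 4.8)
The plan is to realize $(-)^{nec}$ as witnessing an equivalence between templicial vector spaces and those necklace categories satisfying a Segal-type condition, and then to obtain the adjunction by coreflection. The conceptual heart is the extended functor $X_{\bullet}:\necop\to\K\Quiv_{O}$ associated to a templicial object $X$, together with its defining property that it is \emph{strong monoidal on wedges}, i.e.\ $X_{T\vee S} = X_{T}\otimes_{O}X_{S}$ canonically. Using the unique active--inert factorization of necklace maps recalled above, I would first record that the data of a functor $\necop\to\K\Quiv_{O}$ is equivalent to its restriction along the single-bead embedding $\fint\hookrightarrow\nec$, $[n]\mapsto\Delta^{n}$ (encoding the inner faces and degeneracies through the active bead maps), together with the maps induced by inert maps (the comultiplications), subject to compatibility. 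This makes precise the slogan that a templicial object is exactly a necklicial object that is strong monoidal on wedges, and it is the identification on which everything else rests.

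\emph{Full faithfulness.} I would unwind the two notions of morphism and match them bijectively. A morphism $X\to Y$ in $\ts\Vect(\K)$ is a map of object sets $f$ together with quiver morphisms $X_{n}\to Y_{n}$ commuting with $d_{j}$, $s_{i}$ and the comultiplications $\mu_{k,l}$; an enriched functor $X^{nec}\to Y^{nec}$ is a map $f$ on objects together with morphisms of necklicial vector spaces $X_{\bullet}(a,b)\to Y_{\bullet}(fa,fb)$ compatible with composition and units. Compatibility with the active bead maps corresponds exactly to commuting with faces and degeneracies, while compatibility with the inert maps, read through the wedge identification $X_{T\vee S}=X_{T}\otimes_{O}X_{S}$, corresponds exactly to commuting with the comultiplications and to respecting composition. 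Faithfulness is then immediate since $X^{nec}$ records all the $X_{T}$; for fullness one checks that an enriched functor, being compatible with composition and units, is determined by and reassembles from its single-bead components into a genuine templicial morphism.

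\emph{Essential image and adjunction.} That $X^{nec}$ lies in the asserted subcategory is immediate: under $X_{T\vee S}=X_{T}\otimes_{O}X_{S}$ the composition quiver maps $X_{T}\otimes_{O}X_{S}\to X_{T\vee S}$ are the identity, hence isomorphisms. Conversely, given $\mathcal{C}\in\K\Cat_{\nec}$ with all $\mathcal{C}_{T}\otimes_{\Ob\mathcal{C}}\mathcal{C}_{S}\to\mathcal{C}_{T\vee S}$ invertible, I would reconstruct a templicial object by setting $X_{n}=\mathcal{C}_{\Delta^{n}}$, taking faces and degeneracies from the active bead maps, and defining the comultiplication as $\mathcal{C}$ applied to the inert map $\Delta^{k}\vee\Delta^{l}\to\Delta^{k+l}$ followed by the inverse of the Segal isomorphism $\mathcal{C}_{\Delta^{k}}\otimes_{\Ob\mathcal{C}}\mathcal{C}_{\Delta^{l}}\xrightarrow{\sim}\mathcal{C}_{\Delta^{k}\vee\Delta^{l}}$; coassociativity and counitality then follow from necklicial functoriality together with the associativity and unitality of the enriched composition, and one verifies $X^{nec}\cong\mathcal{C}$. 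Finally, to produce $(-)^{temp}$ it suffices to exhibit the essential image as coreflective in $\K\Cat_{\nec}$: for arbitrary $\mathcal{C}$ one builds the universal wedge-iso necklace category mapping to it, transports it across the equivalence to define $\mathcal{C}^{temp}$, and takes the counit to be the coreflection map; full faithfulness of $(-)^{nec}$ then forces the unit $X\to(X^{nec})^{temp}$ to be an isomorphism, giving the adjunction.

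\emph{Main obstacle.} The genuinely delicate point is the construction of $(-)^{temp}$ on necklace categories that do \emph{not} satisfy the Segal condition, and the verification of its universal property. The difficulty is one of variance: the enriched composition yields maps $\mathcal{C}_{T}\otimes_{\Ob\mathcal{C}}\mathcal{C}_{S}\to\mathcal{C}_{T\vee S}$, whereas a comultiplication requires a map in the opposite direction, so one cannot simply restrict $\mathcal{C}$ to single beads. Resolving this needs either a coend/colimit construction forcing the Segal condition or an explicit identification of the coreflector, after which the colax-monoidal coherences (coassociativity, counitality, naturality over $\fint$) must be checked against the reconstruction. The remaining work -- matching the two kinds of morphism data and the sign and coherence bookkeeping for the wedge identification -- is routine given the active--inert factorization.
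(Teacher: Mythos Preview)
The paper does not prove this proposition; it is stated as a citation of \cite[Theorem 3.12]{lowen2024enriched} and no argument is given in the present paper. Consequently there is nothing here to compare your proposal against. Your sketch is a plausible outline of how such a result is typically established (identify templicial objects with necklace categories satisfying a Segal-type condition via the active--inert factorization, then produce the right adjoint as a coreflector), and you are candid that the genuinely nontrivial step---the explicit construction of $(-)^{temp}$ on necklace categories \emph{not} satisfying the Segal condition and the verification of its universal property---is left open. To assess whether your approach matches the authors', or whether the coreflector is built as you suggest, you would need to consult the cited reference rather than this paper.
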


\subsubsection{Quasi-categories in vector spaces}\label{parqcat}
\begin{comment}
Recall that the forgetful functor $U: \Vect(\K) \rightarrow \Set$ preserves  and reflects regular epimorphisms.     
\end{comment}
\par Given integers $0\leq j\leq n$, we denote by $\Delta^{n}$ and $\Lambda^{n}_{j}$ the standard $n$-simplex and the $j$th $n$-horn respectively. 

\begin{Def}\label{DefwK}
Let $Y: \nec^{op}\longrightarrow \Vect(\K)$ be a necklicial vector space. We say that $Y$ is \emph{weak Kan} if for all $0 < j < n$ any lifting problem
\[\begin{tikzcd}
	{\tilde{F}(\Lambda^{n}_{j})_{\bullet}(0,n)} & Y \\
	{\tilde{F}(\Delta^{n})_{\bullet}(0,n)}
	\arrow[from=1-1, to=2-1]
	\arrow[from=1-1, to=1-2]
	\arrow[dashed, from=2-1, to=1-2]
\end{tikzcd}\]
where the vertical morphism is induced by the inclusion $\Lambda^{n}_{j}\subseteq \Delta^{n}$, has a solution in $\Fun(\necop, \Vect(\K))$. Here, $\tilde{F}$ is the free functor from \eqref{eqtildeF}. We call a templicial vector space $(X,O)$ a \emph{quasi-category in vector spaces} if the functors $X_{\bullet}(a,b)$ are weak Kan for all $a,b\in O$.
\end{Def}
\begin{comment}
    \begin{Rem}\label{remark: lifting condition for necklace cats.}
Given a simplicial set $K$ with $a,b\in K_{0}$, let us write $\Sigma K_{a,b}$ for the necklace category with two objects $0$ and $1$, and $\Sigma K_{a,b}(0,1) = \tilde{F}(K)_{\bullet}(a,b)$, $\Sigma K_{a,b}(0,0) = \Sigma K_{a,b}(1,1) = I$ and $\Sigma K_{a,b}(1,0) = 0$. Note that a templicial object $X$ is then a quasi-category in $\K$-vector space precisely when $X^{nec}$ has the right lifting property with respect to $\Sigma (\Lambda^{n}_{j})_{0,n}\hookrightarrow \Sigma \Delta^{n}_{0,n}$ in $\K\Cat_{\nec}$ for all $0 < j < n$. 
\end{Rem}
\end{comment}

\begin{Ex}
\label{Ex:QCats}
The following are quasi-categories in vector spaces:
\begin{enumerate}[1.]
	\item \cite[Corollary 5.11]{lowen2024enriched} the nerve $N_{\K}(\ccc)$ of a $\K$-linear category $\ccc$ (see Example \ref{exnerve});
    \item \cite[Corollary 5.12]{lowen2024enriched} the templicial homotopy coherent nerve $N^{hc}_\K(\mathcal{C})$ of a $S\Vect(\K)$-enriched category such that $U(\mathcal{C}(a,b))$ is a Kan complex for every $a,b\in \Ob \mathcal{C}$.
	\item \cite[Corollary 4.15]{lowen2023frobenius} the free templicial vector space $\tilde{F}(X)$, for an ordinary quasi-category $X \in \SSet$.
    \item \cite[Corollary 4.17]{lowen2023frobenius} the templicial dg-nerve $N^{dg}_\K(\mathcal{C})$ of a dg-category $\mathcal{C}_{\bullet}$ over $\K$.
\end{enumerate}
\end{Ex}

\section{The templicial $A_\infty$-nerve}
\label{Sec:AooNerve}
\subsection{The templicial $A_\infty$-nerve of an $A_\infty$-category}
\label{Subsec:AooNerve}
\par Let $(A, m_1=\partial, m_2=m, m_3,...)$ be a (small) $A_\infty$-category over $\mathbb{K}$ concentrated in non-negative degree with object set $\Ob(\mathcal{A})$. We define its \emph{templicial $A_{\infty}$-nerve} $N^{A_{\infty}}_{\mathbb{K}}(\mathcal{A})$ by constructing its associated necklace category $N^{A_{\infty}}_{\mathbb{K}}(\mathcal{A})^{nec}$.

\begin{Def}
Given a necklace $T = \nec$, we define a $k$-quiver
$$
N^{A_{\infty}}_{\mathbb{K}}(\mathcal{A})_{T}\in \mathbb{K}\Quiv_{\Ob \mathcal{A}}
$$
as follows. For any $a,b\in \Ob(\mathcal{A})$, define $N^{A_{\infty}}_{\mathbb{K}}(\mathcal{A})_{T}(a,b)$ as the vector space of collections
\[
y = (y_{g})_{g}\in \bigoplus_{\substack{g: U\hookrightarrow T\\ \text{injective in }\nec}} (s\mathcal{A})_{U}(a,b)
\]
satisfying, for all injective necklace maps $g: U\hookrightarrow T$:
\begin{equation}\label{equation:TAN}
\sum_{k=1}^{l}\sum_{\substack{\nu: S\hookrightarrow \Delta^{n_{k}}\\ \text{inert}}}(-1)^{\epsilon(S)+\varphi_k(\nu)} (\id^{\otimes k-1}\otimes m_{\ell(S)}\otimes \id^{\otimes l-k})(y_{g\circ_k \nu}) = \sum_{j=1}^{d} (-1)^{j-1} y_{g\circ \delta_{i_{j}}}
\end{equation}
in $\overline{T}(s\mathcal{A})_{d-1}(a,b)$ where we've denoted $U = \Delta^{n_{1}}\vee \dots\vee \Delta^{n_{l}}$ and $U^{c} = \{i_{1} < \dots < i_{d}\}$ so that $l = \ell(U)$ and $d = \dim(U)$. Note that in particular, for $T = \Delta^{0}$, we have $N^{A_{\infty}}_{\mathbb{K}}(\mathcal{A})_{0} = \underline{\mathbb{K}}$.

We refer to equation \eqref{equation:TAN} as the \emph{templicial $A_\infty$-nerve (TAN) relation} at $g$.
\end{Def}

\begin{Rem}
%Write $U^{<k}=\Delta^{n_1}\vee \cdots \vee \Delta^{n_{k-1}}$. 
In fact, the (TAN) relation at $g$ decomposes into $l$ independent equations, one for each bead of $U$. 
For every $1\leq k\leq \ell(U)$ we have: 
\begin{equation}\label{equation:TANdecomp}
\sum_{\substack{\nu: S\hookrightarrow \Delta^{n_{k}}\\ \text{inert}}}(-1)^{\epsilon(S)+\ell(S)\dim(U^{<k})} (\id^{\otimes k-1}\otimes m_{\ell(S)}\otimes \id^{\otimes l-k})(y_{g\circ_k \nu}) = \sum_{j=1}^{n_k-1}(-1)^{j-1} y_{g\circ_{k} \delta_{j}}
\end{equation}
in $(s\mathcal{A})_{\Delta^{n_{1}}\vee \dots \vee \Delta^{n_{k}-1}\vee \dots \vee\Delta^{n_{l}}}(a,b)$.
\end{Rem}
\begin{comment}
    \textcolor{blue}{(Maybe give the (TAN) relation in low dimensions and remark about the difference with the coalgebra differential)}
\end{comment}

The assignment $T\mapsto N^{A_{\infty}}_{\mathbb{K}}(\mathcal{A})_{T}$ extends to a functor $\nec^{op}\rightarrow \mathbb{K}\Quiv_{\Ob(\mathcal{A})}$ (see Proposition \ref{proposition:Ainftynervestructuremaps}). In order to define this functor, we first need to consider an auxiliary strong monoidal functor $\nec^{op}_{-}\rightarrow \mathbb{K}\Quiv_{\Ob(\mathcal{A})}$.

\begin{Not}
\begin{enumerate}[1.]
    \item Define a strong monoidal functor
    $$
    \overline{(-)}: \nec^{op}_{-}\to  \mathbb{K}\Quiv_{\Ob \mathcal{A}}: U\mapsto (s\mathcal{A})_{U}
    $$
    as follows. Since every active surjective necklace map is a unique wedge of surjective maps $e:\Delta^n \twoheadrightarrow \Delta^m$, it suffices to define:
    \[
    \overline{e}=\begin{cases}
    u: \underline{\mathbb{K}} \to \mathcal{A}_{0} = (s\mathcal{A})_1 & e: \Delta^1 \twoheadrightarrow \Delta^0\\
    \id_{\mathcal{A}_{n-1}} & e = \id_{\Delta^{n}}\\
    0 & \text{otherwise}
    \end{cases}
    \]
   
    \noindent Note that an active surjective map is spine collapsing (Definition \ref{Def:SpineCollapsing}) if and only if its image under $\overline{(-)}$ is nonzero.
    \item Let $f: T\rightarrow T'$ be a necklace map. For every $a,b\in \Ob(\mathcal{A})$, we define
    $$
    f^{*}: \bigoplus_{\substack{g: U\hookrightarrow T'\\ \text{injective}}}(s\mathcal{A})_{T'}(a,b)\rightarrow \bigoplus_{\substack{g: U\hookrightarrow T\\ \text{injective}}}(s\mathcal{A})_{T}(a,b): y\mapsto f^{*}(y)
    $$
    as follows. By Proposition \ref{Prop:Factorization}, we can factor $f\circ g$ uniquely as an active surjective necklace map $e$ followed by an injective necklace map $g'$. 
    $$
    \xymatrix{
    T\ar[r]^{f} & T'\\
    U\ar@{^{(}->}[u]^{g}\ar@{->>}[r]_{e} & U'\ar@{^{(}->}[u]_{g'}
    }
    $$
    Then set, for every injective necklace map $g: U\hookrightarrow T$,
    $$
    f^{*}(y)_{g} = \overline{e}(y_{g'}).
    $$
    Note that in particular, when $f$ is injective, we have $f^{*}(y)_{g} = y_{f\circ g}$.
\end{enumerate}
\end{Not}

\begin{Lem}\label{lemma:TANpreservedbystructuremaps}
Let $f: T\rightarrow T'$ be a necklace map and $a,b\in \Ob(\mathcal{A})$. If $y\in \bigoplus_{g: U\hookrightarrow T}(s\mathcal{A})_{U}(a,b)$ satisfies the (TAN) relations \eqref{equation:TAN}, then so does $f^{*}(y)$.
\end{Lem}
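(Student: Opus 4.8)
### Proof proposal

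The plan is to verify that $f^{*}(y)$ satisfies the (TAN) relation at an arbitrary injective necklace map $h\colon V \hookrightarrow T$, given that $y$ satisfies all (TAN) relations. The first step is to reduce to the two basic classes of necklace maps. Since $f^{*}$ is manifestly functorial in $f$ (this can be checked from the definition using the uniqueness of active-surjective/injective factorizations, Proposition \ref{Prop:Factorization}), and every necklace map factors as an active surjective map followed by an inert injective map followed by a general injective map, it suffices to treat separately: (i) $f$ injective, and (ii) $f$ active surjective. Case (i) is essentially immediate: when $f$ is injective we have $f^{*}(y)_{g} = y_{f\circ g}$, so the (TAN) relation at $h\colon V\hookrightarrow T$ for $f^{*}(y)$ is literally the (TAN) relation at $f\circ h\colon V \hookrightarrow T'$ for $y$, which holds by hypothesis. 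So the real content is case (ii).

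For case (ii), write $f = e\colon T \twoheadrightarrow T'$ active surjective, which by the discussion after Proposition \ref{Prop:Factorization} decomposes as a wedge $e = e_{1}\vee\cdots\vee e_{r}$ of surjective maps $e_{i}\colon \Delta^{p_i}\twoheadrightarrow \Delta^{q_i}$ between single simplices. Given $h\colon V \hookrightarrow T'$, I would use Lemma \ref{Lem:SpineCollandDim}(2) (direct divisibility of injective maps over $\vee$) to decompose $h$ compatibly with the bead structure of $T'$, and correspondingly decompose each $e_i\circ(\text{piece of }h)$ into its own active-surjective/injective factorization. Because $\overline{(-)}$ is strong monoidal and vanishes on non-spine-collapsing maps, the only surjective maps $e_i$ contributing nonzero terms are those that are either identities or $\Delta^1\twoheadrightarrow\Delta^0$; an identity bead contributes trivially, so the heart of the matter is the single move $e\colon \Delta^1 \twoheadrightarrow \Delta^0$ on one bead, with all other beads held fixed by identities. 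By the decomposition of the (TAN) relation into one equation per bead (equation \eqref{equation:TANdecomp}) and multilinearity of the tensor factors, I can isolate this to a local computation on that one bead.

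The main obstacle — and the only genuinely delicate point — is the sign bookkeeping in this local computation. Applying $e^{*}$ to $y$ at an injective map into the collapsed bead $\Delta^0$ pulls back through the factorization of $e\circ(-)$, which reintroduces a copy of the collapsed edge via the unit $u\colon \underline{\mathbb{K}}\to (s\mathcal{A})_1$; one then needs that the image of the (TAN) relation for $y$ at the relevant maps into $\Delta^1$, after applying $\overline{e}$, becomes exactly the (TAN) relation for $e^{*}(y)$ at the given map into $\Delta^0$. On the left-hand side of \eqref{equation:TAN}, the inert maps $\nu\colon S\hookrightarrow \Delta^1$ are just $\id_{\Delta^1}$ (giving $m_2$) and $\Delta^1$ itself viewed with a single bead — one must check that after collapsing, the $m_2$-term dies against the face term because $\overline{e}\circ m_{\ell(S)} = 0$ except in the spine-collapsing case, leaving precisely the unit-insertion identity $m_2(\id\otimes u_x) = \id = m_2(u_x\otimes\id)$ from \eqref{eq:AooUnit}; and on the right-hand side the alternating face sum over $V^{c}$ must match up after reindexing. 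The signs $\epsilon(S) + \varphi_k(\nu)$ and $(-1)^{j-1}$ must be tracked through the reindexing of beads caused by the collapse, and I expect Lemma \ref{Lem:MagicFormula} to be exactly the tool that reconciles the signature terms $\epsilon(S) + \varphi_k(\nu)$ before and after the collapse. I would organize this as: fix $h$, expand $e^{*}(y)$ explicitly via the factorization, substitute into \eqref{equation:TAN} for $e^{*}(y)$, recognize each resulting term as $\overline{e}$ applied to a term of \eqref{equation:TAN} for $y$ (at a unique injective map into $T$), and invoke the hypothesis; the sign lemma ensures the coefficient matching is term-by-term.
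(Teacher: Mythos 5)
Your overall architecture (reduce to injective maps plus active surjective maps, decompose the latter beadwise, use the per-bead form \eqref{equation:TANdecomp} of the relation, and invoke strict unitality \eqref{eq:AooUnit} together with Lemma \ref{Lem:MagicFormula} for signs) is the same as the paper's, and your case (i) is correct. But there is a genuine gap in case (ii): you claim that ``the only surjective maps $e_{i}$ contributing nonzero terms are identities or $\Delta^{1}\twoheadrightarrow \Delta^{0}$,'' so that the whole verification localizes to a single spine collapse. This is false, for two compounding reasons. First, a general active surjective map has components $e_{k}\colon \Delta^{n_{k}}\twoheadrightarrow \Delta^{m_{k}}$ that are arbitrary surjections in $\fint$ (e.g.\ $\sigma_{j}\colon \Delta^{n_{k}}\twoheadrightarrow\Delta^{n_{k}-1}$ with $n_{k}\geq 2$), and such a map admits no factorization in $\nec$ through wedges of identities and $\Delta^{1}\twoheadrightarrow\Delta^{0}$, so you cannot reduce to that case by functoriality. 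Second, and more importantly, the vanishing of $\overline{e_{k}}$ does not make the relation at $g$ hold termwise: the terms $f^{*}(y)_{g\circ_{k}\nu}$ and $f^{*}(y)_{g\circ_{k}\delta_{j}}$ are governed by $\overline{e_{S}}$ and $\overline{e'_{k}}$, the active surjective parts of the \emph{composites} $e_{k}\circ\nu$ and $e_{k}\circ\delta_{j}$, and these can be spine collapsing (hence contribute nonzero terms) even when $e_{k}$ itself is not. Concretely, for $e_{k}=\sigma_{0}\colon \Delta^{n_{k}}\twoheadrightarrow\Delta^{n_{k}-1}$ with $n_{k}\geq 2$, the left-hand side of \eqref{equation:TANdecomp} retains exactly one term coming from $\nu=\nu_{1,n_{k}-1}$, which survives only because $m_{2}(u\otimes \id)=\id$, and it must be matched against the single surviving face term $j=1$ on the right; for $e_{k}=\sigma_{i}$ with $0<i<n_{k}-1$ the left-hand side vanishes but the right-hand side is a difference of two equal nonzero face terms ($j=i$ and $j=i+1$) that cancel. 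These are precisely the substantive verifications of the lemma, and your proposal's logic would incorrectly conclude that both sides vanish term by term there.

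A secondary slip: in your local analysis of a bead $\Delta^{1}$, the only inert map $S\hookrightarrow\Delta^{1}$ is $\id_{\Delta^{1}}$ with $\ell(S)=1$, contributing $m_{1}=\partial$ rather than an $m_{2}$-term, so the ``$m_{2}$-term dies against the face term'' picture does not describe what happens on a collapsed edge. To repair the proof, keep your beadwise reduction but replace the claimed dichotomy by a full case analysis on $e_{k}$ (identity; boundary codegeneracy $\sigma_{0}$ or $\sigma_{n_{k}-1}$; interior codegeneracy; the map $\Delta^{2}\twoheadrightarrow\Delta^{0}$; all remaining surjections), computing $m_{\ell(S)}\circ\overline{e_{S}}$ and $\overline{e'_{k}}$ in each case from \eqref{eq:AooUnit} and the simplicial identities.
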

\begin{proof}
We show that the (TAN) relations hold for $f^{*}(y)$. Let $g: U\hookrightarrow T$ be an injective necklace map with $U = \Delta^{n_{1}}\vee\dots \vee\Delta^{n_{l}}$. As above, factor $f\circ g = g'\circ e$ where $e: U\twoheadrightarrow U'$ is active surjective and $g': U'\hookrightarrow T'$ is injective. We write $e = e_{1}\vee \dots\vee e_{l}$ with $e_{i}: \Delta^{n_{i}}\twoheadrightarrow \Delta^{m_{i}}$ and $n_{i} > 0$.

Let $1\leq k\leq l$. Given an inert necklace map $\nu: S\hookrightarrow \Delta^{n_{k}}$, factor $e_{k}\circ \nu = \nu'\circ e_{S}$ with $e_{S}$ active surjective and $\nu'$ injective. Then since $f\circ g\circ_{k} \nu = g'\circ e\circ_{k} \nu = (g'\circ_{k} \nu')\circ (e_{1}\vee \dots \vee e_{k-1}\vee e_{S}\vee e_{k+1}\vee \dots \vee e_{l})$, it follows from the uniqueness of the epi-mono factorization in $\nec$ that
$$
f^*(y)_{g\circ_k \nu}=(\overline{e_1}\otimes \cdots \otimes \overline{e_{k-1}}\otimes \overline{e_S} \otimes \overline{e_{k+1}}\otimes \cdots \otimes \overline{e_l})(y_{g'\circ_k \nu'})
$$
Moreover, it follows from \ref{eq:AooUnit} that
$$
m_{\ell(S)}\circ \overline{e_S} =
\begin{cases}
    m_{\ell(S)} & \text{if }e_k=\id_{\Delta^{n_{k}}}\\
    \id_{\mathcal{A}_{n_{k}-2}} & \text{if }(\nu = \nu_{1,n_{k}-1}, e_{k} = \sigma_0)\text{ or }(\nu = \nu_{n_{k}-1,1}, e_{k} = \sigma_{n_{k}-1})\\
  u & \text{if }\nu = \nu_{1,1}, e_{k} = \sigma_{0}\sigma_{1}: \Delta^2\twoheadrightarrow \Delta^0\\
    0 & \text{otherwise}
\end{cases}
$$
Note that in the second and third cases, $\nu' = \id_{\Delta^{n_{k}}}$.

On the other hand, for $0 < j < n_{k}$, factor $e_{k}\circ \delta_{j} = \delta'_{j}\circ e'_{k}$ with $e'_{k}$ active surjective and $\delta'_{j}$ injective. Then because $f\circ g\circ_{k} \delta_{j} = g'\circ e\circ_{k} \delta_{j} = (g'\circ_{k} \delta'_{j})\circ (e_{1}\vee \dots \vee e_{k-1}\vee e'_{k}\vee e_{k+1}\vee \dots \vee e_{l})$, it follows from the uniqueness of the epi-mono factorization in $\nec$ that
$$
f^{*}(y)_{g\circ_{k} \delta_{j}} = (\overline{e_{1}}\otimes \dots \otimes \overline{e_{k-1}}\otimes \overline{e'_{k}}\otimes \overline{e_{k+1}}\otimes \dots \otimes \overline{e_{l}})(y_{g'\circ_{k} \delta'_{j}})
$$
Moreover, it follows from the simplicial identities that
$$
\overline{e'_{k}} =
\begin{cases}
    \id_{\mathcal{A}_{n_{k}-2}} & \text{if }e_{k} = \id_{\Delta^{n_{k}}}\\
    \id_{\mathcal{A}_{n_{k}-2}} & \text{if }e_{k} = \sigma_{j-1}\text{ or }e_{k} = \sigma_{j}\\
    u & \text{if }e_{k} = \sigma_{0}\sigma_{1}: \Delta^{2}\twoheadrightarrow \Delta^{0}\\
    0 & \text{otherwise}
\end{cases}
$$
Note that in the second and third cases, $\delta'_{j} = \id_{\Delta^{n_{k}}}$.

We distinguish some cases:
\begin{itemize}
    \item Assume that $e_{k} = \id_{\Delta^{n_{k}}}$.\\
    Then with notations as above, we have $\nu' = \nu$ and $\delta'_{j} = \delta_{j}$ for any $\nu: S\hookrightarrow \Delta^{n_{k}}$ and $0 < j < n_{k}$. Recall that $\Bar{e}$ is non-zero if and only if $e$ is spine collapsing. Assuming $e$ is not spine collapsing, then both sides of the (TAN) relation simply vanish. If $e$ is spine collpasing, then $\dim(U)=\dim(U')$ and $\dim(U^{<k})=\dim(U'^{<k})$ for all $1\leq k \leq \ell(U)$. Consider equation \ref{equation:TANdecomp} for $y$ at $g'$ and $k$:
    $$
    \sum_{\substack{\nu: S\hookrightarrow \Delta^{n_{k}}\\ \text{inert}}}(-1)^{\epsilon(S)+\ell(S)\dim(U^{< k})} (\id^{\otimes k-1}\otimes m_{\ell(S)}\otimes \id^{\otimes l-k})(y_{g'\circ_{k} \nu}) = \sum_{j=1}^{n_{k}-1}(-1)^{j-1} y_{g'\circ_{k} \delta_{j}}
    $$
    By the previous calculations, applying $(\overline{e_{1}}\otimes \dots \otimes \id_{\mathcal{A}_{n_{k}-2}}\otimes \dots \otimes \overline{e_{l}})$ to this equation exactly recovers equation \eqref{equation:TANdecomp} for $f^{*}(y)$ at $g$ and $k$. 
    \item Assume that $e_{k} = \sigma_{0}$ or $e_{k} = \sigma_{n_{k}-1}$.\\
    In each case, by the previous calculations, both sides of equation \eqref{equation:TANdecomp} for $f^{*}(y)$ at $g$ and $k$ reduce to
    $$
    (-1)^s (\overline{e_{1}}\otimes \dots \otimes \id_{\mathcal{A}_{n_{k}-2}}\otimes \dots \otimes \overline{e_{l}})(y_{g'})
    $$
\noindent where $s=0$ for $e_k=\sigma_0$ and $s=n_k-2$ for $e_k=\sigma_{n_k-1}$
    \item Assume $e_{k} = \sigma_{i}$ for some $0 < i < n_{k}-1$.\\
    By the previous calculations, the left hand side of equation \eqref{equation:TANdecomp} at $g$ and $k$ is $0$, while the right hand side reduces to
    $$
    (-1)^{i-2} \left((\overline{e_{1}}\otimes \dots \otimes \id_{\mathcal{A}_{n_{k}-2}}\otimes \dots \otimes \overline{e_{l}})(y_{g'}) -(\overline{e_{1}}\otimes \dots \otimes \id_{\mathcal{A}_{n_{k}-2}}\otimes \dots \otimes \overline{e_{l}})(y_{g'}) \right)= 0
    $$

    \item Assume that $e_{k} = \sigma_{0}\sigma_{0}: \Delta^{2}\twoheadrightarrow \Delta^{0}$.\\
    By the previous calculations, both sides of equation \eqref{equation:TANdecomp} for $f^{*}(y)$ at $g$ and $k$ reduce to
    $$
     (\overline{e_{1}}\otimes \dots\otimes u\otimes \dots \otimes \overline{e_{l}})(y_{g'})
    $$

    \item In all other cases, both sides of equation \eqref{equation:TANdecomp} for $f^{*}(y)$ at $g$ and $k$ are $0$ by the previous calculations.
\end{itemize} 
\end{proof}

\begin{Prop}\label{proposition:Ainftynervestructuremaps}
    The assignments $T\mapsto N^{A_{\infty}}_{\mathbb{K}}(\mathcal{A})_{T}$ and $f\mapsto f^{*}$ define a functor $N^{A_{\infty}}_{\mathbb{K}}(\mathcal{A})_{\bullet}: \nec^{op}\rightarrow \mathbb{K}\Quiv_{\Ob(\mathcal{A})}$.
\end{Prop}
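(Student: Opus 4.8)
The plan is to split the statement into a (trivial) well-definedness part and a functoriality part, and to reduce the latter to two inputs that are already available: the contravariant functoriality of the auxiliary assignment $\overline{(-)}$, and the uniqueness of the (epi, mono) factorization in $\nec$ (Proposition~\ref{Prop:Factorization}). For fixed $T$, the object $N^{A_{\infty}}_{\mathbb{K}}(\mathcal{A})_{T}$ of $\mathbb{K}\Quiv_{\Ob(\mathcal{A})}$ is given by construction. For a necklace map $f: T\to T'$ and fixed vertices $a,b$, the rule $f^{*}(y)_{g}=\overline{e}(y_{g'})$, with $f\circ g=g'\circ e$ the epi-mono factorization, is $\mathbb{K}$-linear in $y$ component by component, being a composite of the projection $y\mapsto y_{g'}$ with the linear map $\overline{e}$, and it keeps the vertex set $\Ob(\mathcal{A})$ fixed; that $f^{*}$ actually restricts to a map $N^{A_{\infty}}_{\mathbb{K}}(\mathcal{A})_{T'}\to N^{A_{\infty}}_{\mathbb{K}}(\mathcal{A})_{T}$, i.e.\ sends elements satisfying the (TAN) relations to such elements, is exactly Lemma~\ref{lemma:TANpreservedbystructuremaps}. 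So only the identities $(\id_{T})^{*}=\id$ and $(f'\circ f)^{*}=f^{*}\circ (f')^{*}$ remain.

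The first is immediate: for injective $g: U\hookrightarrow T$ the epi-mono factorization of $\id_{T}\circ g$ is $g\circ\id_{U}$, so $e=\id_{U}$, $g'=g$, and $\overline{\id_{U}}=\id$. For the second, I would fix an injective $g: U\hookrightarrow T$ and take the epi-mono factorizations $f\circ g=g'\circ e$ and $f'\circ g'=g''\circ e'$; a short computation then gives $(f'\circ f)\circ g=g''\circ(e'\circ e)$, and since the active surjective maps are the epimorphisms of $\nec$ (Proposition~\ref{Prop:Factorization}) they are stable under composition, so this is again an epi-mono factorization. Uniqueness of such factorizations then yields
\[
((f'\circ f)^{*}y)_{g}=\overline{e'\circ e}(y_{g''})=\overline{e}(\overline{e'}(y_{g''}))=\overline{e}(((f')^{*}y)_{g'})=(f^{*}((f')^{*}y))_{g},
\]
where the middle equality is the contravariant functoriality of $\overline{(-)}$. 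Since $g$ is arbitrary, $(f'\circ f)^{*}=f^{*}\circ(f')^{*}$, which finishes the functoriality once $\overline{(-)}$ is known to be a functor.

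The only genuine content — and what I regard as the main obstacle, though it is still routine — is to confirm that $\overline{(-)}: \nec^{op}_{-}\to\mathbb{K}\Quiv_{\Ob(\mathcal{A})}$ is a well-defined strong monoidal functor, given that it is prescribed only on the single-bead surjections $e: \Delta^{n}\twoheadrightarrow\Delta^{m}$. Strong monoidality is built in through $\overline{e_{1}\vee e_{2}}=\overline{e_{1}}\otimes\overline{e_{2}}$ and the canonical isomorphism $(s\mathcal{A})_{U\vee V}\cong(s\mathcal{A})_{U}\otimes_{\Ob(\mathcal{A})}(s\mathcal{A})_{V}$, so via the wedge decomposition of active surjective maps the functoriality reduces to the case of composable single-bead surjections $e: \Delta^{n}\twoheadrightarrow\Delta^{m}$ and $e': \Delta^{m}\twoheadrightarrow\Delta^{p}$. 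Here I would use that $\overline{e}$ of a single-bead active surjection is nonzero precisely when it is spine collapsing, i.e.\ an identity or $\Delta^{1}\twoheadrightarrow\Delta^{0}$, together with Lemma~\ref{Lem:SpineCollandDim}(1), by which the spine-collapsing active surjections are exactly the dimension-preserving ones and hence closed under composition. Thus if $e'\circ e$ is spine collapsing, then so are both $e$ and $e'$, and $\overline{e'\circ e}=\overline{e}\circ\overline{e'}$ is verified by inspecting the few remaining configurations; if $e'\circ e$ is not spine collapsing, then at least one of $e,e'$ is not either, so both sides vanish. With this verified, the argument above is complete.
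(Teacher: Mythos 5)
Your proposal is correct and follows essentially the same route as the paper, whose proof simply cites Lemma \ref{lemma:TANpreservedbystructuremaps} for well-definedness and the uniqueness of the epi-mono factorization together with the functoriality of $\overline{(-)}$ for preservation of identities and composition. You additionally spell out why $\overline{(-)}$ is itself functorial (via closure of spine-collapsing maps under composition, Lemma \ref{Lem:SpineCollandDim}), a point the paper takes for granted; this verification is sound.
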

\begin{proof}
The functor is well-defined by Lemma \ref{lemma:TANpreservedbystructuremaps}. The fact that it preserves composition and identities immediately follows from the unique epi-mono factorization in $\nec$ and the functoriality of $\overline{(-)}$.
\end{proof}

Recall that by direct divisibility \ref{Lem:SpineCollandDim}, any injective necklace map $g: U\hookrightarrow T_{1}\vee T_{2}$ decomposes as $g = g_{1}\vee g_{2}$ for some unique $g_{i}: U_{i}\hookrightarrow T_{i}$. Consequently, we have a canonical quiver isomorphism
\begin{equation}\label{diagram:caniso}
    \bigoplus_{\substack{g_{2}: U\hookrightarrow T_{1}\\ \text{injective}}}(s\mathcal{A})_{U_{1}}\otimes_{\Ob(\mathcal{A})} \bigoplus_{\substack{g_{2}: U\hookrightarrow T_{2}\\ \text{injective}}}(s\mathcal{A})_{U_{2}}\xrightarrow{\simeq} \bigoplus_{\substack{g: U\hookrightarrow T_{1}\vee T_{2}\\ \text{injective}}}(s\mathcal{A})_{U}
\end{equation}
sending $x\otimes y\in \bigoplus_{U_{1}\hookrightarrow T_{1}}(s\mathcal{A})_{U_{1}}(a,c)\otimes \bigoplus_{g: U_{2}\hookrightarrow T_{2}}(s\mathcal{A})_{U_{2}}(c,b)$ to $(x_{g_{1}}\otimes y_{g_{2}})_{g}$.

\begin{Lem}\label{lemma:TANpreservesbycomp}
    Let $T_{1},T_{2}\in \nec$. The canonical quiver isomorphism \eqref{diagram:caniso} restricts to an isomorphism:
    $$
    N^{A_{\infty}}_{\mathbb{K}}(\mathcal{A})_{T_{1}}\otimes_{\Ob(\mathcal{A})} N^{A_{\infty}}_{\mathbb{K}}(\mathcal{A})_{T_{1}}\xrightarrow{\simeq} N^{A_{\infty}}_{\mathbb{K}}(\mathcal{A})_{T_{1}\vee T_{2}}
    $$
\end{Lem}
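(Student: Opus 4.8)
The plan is to show that the ambient quiver isomorphism \eqref{diagram:caniso} restricts to the stated isomorphism, i.e.\ that under \eqref{diagram:caniso} the subspace $N^{A_{\infty}}_{\mathbb{K}}(\mathcal{A})_{T_1\vee T_2}$ corresponds exactly to $N^{A_{\infty}}_{\mathbb{K}}(\mathcal{A})_{T_1}\otimes_{\Ob\mathcal{A}}N^{A_{\infty}}_{\mathbb{K}}(\mathcal{A})_{T_2}$. The basic mechanism is that, by direct divisibility (Lemma \ref{Lem:SpineCollandDim}(2)), an injective $g\colon U\hookrightarrow T_1\vee T_2$ factors uniquely as $g_1\vee g_2$ with $g_i\colon U_i\hookrightarrow T_i$, so $U = U_1\vee U_2$ and each bead of $U$ belongs either to $U_1$ or to $U_2$; combining this with the bead-wise form \eqref{equation:TANdecomp} of the (TAN) relation, the full list of (TAN) relations on $T_1\vee T_2$ ought to split into the (TAN) relations on $T_1$ (the beads of $U_1$) together with those on $T_2$ (the beads of $U_2$).

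To make this precise I would fix $a,b\in\Ob\mathcal{A}$, a middle object $c$, homogeneous $x\in\bigoplus_{g_1}(s\mathcal{A})_{U_1}(a,c)$ and $y\in\bigoplus_{g_2}(s\mathcal{A})_{U_2}(c,b)$, and analyse \eqref{equation:TANdecomp} for $z := (x_{g_1}\otimes y_{g_2})_g$ at a map $g = g_1\vee g_2$ and a bead $k$ of $U$, writing $l = \ell(U)$ and $n_1,\dots,n_l$ for the bead sizes. If $k\leq \ell(U_1)$, then $U^{<k} = U_1^{<k}$ and the inert maps $\nu$ and cofaces $\delta_j$ occurring only affect the first $\ell(U_1)$ tensor factors, so \eqref{equation:TANdecomp} for $z$ at $(g,k)$ is exactly \eqref{equation:TANdecomp} for $x$ at $(g_1,k)$ tensored on the right with $y_{g_2}$; the signs $(-1)^{\epsilon(S)+\ell(S)\dim(U^{<k})}$ match on the nose. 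If $k = \ell(U_1)+k'$ with $1\leq k'\leq \ell(U_2)$, then $U^{<k} = U_1\vee U_2^{<k'}$, hence $\dim(U^{<k}) = \dim(U_1)+\dim(U_2^{<k'})$; moreover $\id^{\otimes k-1}\otimes m_{\ell(S)}\otimes \id^{\otimes l-k}$ applied to $x_{g_1}\otimes y_{g_2\circ_{k'}\nu}$ equals $x_{g_1}\otimes\bigl(\id^{\otimes k'-1}\otimes m_{\ell(S)}\otimes \id^{\otimes \ell(U_2)-k'}\bigr)(y_{g_2\circ_{k'}\nu})$ up to the Koszul sign $(-1)^{|m_{\ell(S)}|\dim(U_1)}$ coming from moving $m_{\ell(S)}$ past $x_{g_1}$, which is homogeneous of degree $\dim(U_1)$. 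Since $|m_{\ell(S)}| = \ell(S)-2\equiv \ell(S)\pmod 2$, this Koszul sign equals $(-1)^{\ell(S)\dim(U_1)}$ and therefore exactly absorbs the surplus $(-1)^{\ell(S)\dim(U_1)}$ caused by $\dim(U^{<k}) = \dim(U_1)+\dim(U_2^{<k'})$; thus \eqref{equation:TANdecomp} for $z$ at $(g,k)$ becomes \eqref{equation:TANdecomp} for $y$ at $(g_2,k')$ tensored on the left with $x_{g_1}$. This sign reconciliation is the one delicate point; everything else is index bookkeeping and the compatibility of $\circ_k$ with $\vee$.

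Granting this, the forward inclusion follows by $\mathbb{K}$-bilinearity: if $x$ and $y$ satisfy all their (TAN) relations, then $z$ satisfies all relations \eqref{equation:TANdecomp}, so \eqref{diagram:caniso} restricts to a monomorphism $N^{A_{\infty}}_{\mathbb{K}}(\mathcal{A})_{T_1}\otimes_{\Ob\mathcal{A}}N^{A_{\infty}}_{\mathbb{K}}(\mathcal{A})_{T_2}\hookrightarrow N^{A_{\infty}}_{\mathbb{K}}(\mathcal{A})_{T_1\vee T_2}$. For surjectivity I would take $z\in N^{A_{\infty}}_{\mathbb{K}}(\mathcal{A})_{T_1\vee T_2}(a,b)$ and decompose it along the middle object, $z = \sum_{c}z_c$ with $z_c\in\bigl(\bigoplus_{g_1}(s\mathcal{A})_{U_1}(a,c)\bigr)\otimes_{\mathbb{K}}\bigl(\bigoplus_{g_2}(s\mathcal{A})_{U_2}(c,b)\bigr)$; since $m_{\ell(S)}$ and the $\delta_j$ preserve the middle object, each $z_c$ again satisfies \eqref{equation:TANdecomp}. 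Fixing $c$ and writing $z_c = \sum_i \alpha^{(i)}\otimes\beta^{(i)}$ with $\{\beta^{(i)}\}$ linearly independent, the $U_1$-bead relations force each $\alpha^{(i)}$ to satisfy the (TAN) relations on $T_1$, so $z_c\in N^{A_{\infty}}_{\mathbb{K}}(\mathcal{A})_{T_1}(a,c)\otimes_{\mathbb{K}}\bigl(\bigoplus_{g_2}(s\mathcal{A})_{U_2}(c,b)\bigr)$; rewriting $z_c = \sum_i \alpha^{(i)}\otimes\beta^{(i)}$ now with the $\alpha^{(i)}\in N^{A_{\infty}}_{\mathbb{K}}(\mathcal{A})_{T_1}(a,c)$ linearly independent and applying the $U_2$-bead relations forces each $\beta^{(i)}\in N^{A_{\infty}}_{\mathbb{K}}(\mathcal{A})_{T_2}(c,b)$ (here $\mathbb{K}$ being a field is used). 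Summing over $c$ yields $z\in \bigl(N^{A_{\infty}}_{\mathbb{K}}(\mathcal{A})_{T_1}\otimes_{\Ob\mathcal{A}}N^{A_{\infty}}_{\mathbb{K}}(\mathcal{A})_{T_2}\bigr)(a,b)$, completing the argument. The main obstacle, as indicated, is the sign cancellation in the $U_2$-bead case; the remaining steps are routine matching of indices and wedge decompositions.
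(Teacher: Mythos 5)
Your proposal is correct and follows essentially the same route as the paper: decompose the (TAN) relation for the pure tensor $z=(x_{g_1}\otimes y_{g_2})_g$ bead by bead using direct divisibility, match the signs (the paper simply asserts the sign $\dim(U_2^{<k'})\ell(S)$ for the $U_2$-beads, which is exactly the Koszul cancellation you spell out), and use that $\mathbb{K}$ is a field to deduce that the subspace cut out by these relations is the tensor product of the two subspaces. Your treatment of surjectivity via linear independence of the $\beta^{(i)}$ and then the $\alpha^{(i)}$ is in fact more explicit than the paper's, which only records the pure-tensor equivalence.
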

\begin{proof}
Let $a,b,c\in \Ob(\mathcal{A})$ and $x\in \bigoplus_{U_{1}\hookrightarrow T_{1}}(s\mathcal{A})_{U_{1}}(a,c)$, $y\in \bigoplus_{U_{2}\hookrightarrow T_{2}}(s\mathcal{A})_{U_{1}}(c,b)$. It will suffice to show that the collection $z = (x_{g_{1}}\otimes y_{g_{2}})_{g}$ satisfies the (TAN) relations \eqref{equation:TAN} if and only if both $x$ and $y$ satisfy them. 

 Given an injective necklace map $g: U\hookrightarrow T_{1}\vee T_{2}$, let $g_{i}: U_{i}\hookrightarrow T_{i}$ be unique such that $g = g_{1}\vee g_{2}$.
Take $1\leq k\leq \ell(U)$, and denote the $k$th bead of $U$ by $\Delta^{n_{k}}$. Then for any necklace map $h: V\rightarrow \Delta^{n_{k}}$, we have $g\circ_{k} h = (g_{1}\circ_{k} h)\vee g_{2}$ when $k\leq \ell(U_{1})$ and $g\circ_{k} h = g_{1}\vee (g_{2}\circ_{k} h)$ when $k > \ell(U_{1})$. It follows that the (TAN) relation for $z$ at $g$ is equivalent to the following $\ell(U)$ equations.

For all $1\leq k\leq \ell(U_{1})$, note that $\dim(U^{<k})=\dim(U_1^{<k})$:
$$
\left(\sum_{\substack{\nu: S\hookrightarrow \Delta^{n_{k}}\\ \text{inert}}}(-1)^{\epsilon(S)+\dim(U_1^{<k})\ell(S)}(\id^{\otimes k-1}\otimes m_{\ell(S)}\otimes \id^{\otimes \ell(U_{1})-k})(x_{g_{1}\circ_{k} \nu}) - \sum_{j=1}^{n_{k}-1}(-1)^{j-1} x_{g_{1}\circ_{k} \delta_{j}}\right)\otimes y_{g_{2}} = 0
$$
For all $ 1\leq k\leq \ell(U_2)$:
$$
x_{g_{1}}\otimes \left(\sum_{\substack{\nu: S\hookrightarrow \Delta^{n_{k}}\\ \text{inert}}}(-1)^{\epsilon(S)+\dim(U_2^{<k})\ell(S)}(\id^{\otimes k-1}\otimes m_{\ell(S)}\otimes \id^{\otimes \ell(U_2)-k})(y_{g_{2}\circ_{k} \nu}) - \sum_{j=1}^{n_{k}-1}(-1)^{j-1} y_{g_{2}\circ_{k} \delta_{j}}\right) = 0
$$
Since $\mathbb{K}$ is a field, it follows that $z$ satisfies the (TAN) relations if and only if ($x$ satisfies the (TAN) relations or $y = 0$) and ($x = 0$ and $y$ satisfies the (TAN) relations). Since $x = 0$ and $y = 0$ trivially satisfy the (TAN) relations, this concludes the proof.
\end{proof}
\begin{Rem}
The proof of the Lemma above is the only place in the article where we use that $\K$ is a field. We should thus remark that the construction of $N^{A_\infty}_{R}$ can be carried out for a general unital commutative ring $R$ provided we restrict ourselves to $A_\infty$-categories $\aaa$ such that $\aaa_n(x,y)$ are flat $R$-modules for any $x,y\in \Ob \aaa$ and $n\geq 0$.
\end{Rem}

\begin{Con}
\label{Con:NerveOnCats}
We construct a necklace category $N^{A_{\infty}}_{\mathbb{K}}(\mathcal{A})\in \mathbb{K}\Cat_{\nec}$ as follows. Its set of objects is given by $\Ob(\mathcal{A})$ and for each $a,b\in \Ob(\mathcal{A})$, its hom-object in $\Mod(\mathbb{K})^{\nec^{op}}$ is given by
$$
N^{A_{\infty}}_{\mathbb{K}}(\mathcal{A})_{\bullet}(a,b): \nec^{op}\rightarrow \Mod(\mathbb{K}): T\mapsto N^{A_{\infty}}_{\mathbb{K}}(\mathcal{A})_{T}(a,b), f\mapsto f^{*}
$$

Further, we define the composition maps of $N^{A_{\infty}}_{\mathbb{K}}(\mathcal{A})$ by the restricted isomorphisms of Lemma \ref{lemma:TANpreservesbycomp}. It is immediately clear that they are associative. They are also natural with respect to $T_{1}$ and $T_{2}$ by the uniqueness of the epi-mono factorization in $\nec$ and by the fact that the subcategories $\nec_{-}$ and $\nec_{+}$ are closed under $\vee$.
\end{Con}

\begin{Prop}
    The necklace category $N^{A_{\infty}}_{\mathbb{K}}(\mathcal{A})$ is a templicial vector space.
\end{Prop}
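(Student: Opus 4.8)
The plan is to read this off from the adjunction in Proposition \ref{Prop:tempnec}. Recall that $(-)^{nec}\colon \ts\Vect(\mathbb{K})\to \mathbb{K}\Cat_{\nec}$ is fully faithful with right adjoint $(-)^{temp}$, and that its essential image consists exactly of those necklace categories $\mathcal{C}$ for which the canonical quiver maps $\mathcal{C}_{T}\otimes_{\Ob\mathcal{C}}\mathcal{C}_{S}\to \mathcal{C}_{T\vee S}$ inducing composition are isomorphisms. By Proposition \ref{proposition:Ainftynervestructuremaps} and Construction \ref{Con:NerveOnCats}, $N^{A_{\infty}}_{\mathbb{K}}(\mathcal{A})$ is a bona fide object of $\mathbb{K}\Cat_{\nec}$, so it suffices to check that it lies in this essential image; the desired templicial vector space is then $N^{A_{\infty}}_{\mathbb{K}}(\mathcal{A})^{temp}$, which — as is customary — we keep denoting $N^{A_{\infty}}_{\mathbb{K}}(\mathcal{A})$.

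Thus the one thing to verify is that, for all necklaces $T_{1},T_{2}$, the composition map
$$
N^{A_{\infty}}_{\mathbb{K}}(\mathcal{A})_{T_{1}}\otimes_{\Ob(\mathcal{A})} N^{A_{\infty}}_{\mathbb{K}}(\mathcal{A})_{T_{2}}\longrightarrow N^{A_{\infty}}_{\mathbb{K}}(\mathcal{A})_{T_{1}\vee T_{2}}
$$
is an isomorphism of $\mathbb{K}$-quivers. This is immediate from what has already been established: by Construction \ref{Con:NerveOnCats} this map is \emph{defined} to be the restriction of the canonical quiver isomorphism \eqref{diagram:caniso} to the subspaces cut out by the (TAN) relations \eqref{equation:TAN}, and Lemma \ref{lemma:TANpreservesbycomp} asserts precisely that this restriction is again an isomorphism. (For the book-keeping one may also record that $N^{A_{\infty}}_{\mathbb{K}}(\mathcal{A})_{\Delta^{0}} = \underline{\mathbb{K}}$, since the identity is the only injective necklace map into the monoidal unit $\Delta^{0}$ and the corresponding summand $(s\mathcal{A})_{\Delta^{0}}$ is an empty tensor product, so that the counit of the resulting templicial object is an isomorphism.) This proves the proposition.

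I do not expect any genuine obstacle here: all of the real content sits in the preceding results — that the structure maps $f^{*}$ respect the (TAN) relations and organise into a functor $\nec^{op}\to \mathbb{K}\Quiv_{\Ob(\mathcal{A})}$ (Lemma \ref{lemma:TANpreservedbystructuremaps}, Proposition \ref{proposition:Ainftynervestructuremaps}), and that the composition maps restrict to isomorphisms (Lemma \ref{lemma:TANpreservesbycomp}, which is where it is used that $\mathbb{K}$ is a field). The only care needed is to invoke Proposition \ref{Prop:tempnec} in the correct direction and, for later use, to note the resulting dictionary: under this identification the inner face and degeneracy morphisms of the templicial vector space $N^{A_{\infty}}_{\mathbb{K}}(\mathcal{A})$ are the maps $f^{*}$ with $f$ active, while its comultiplications are the maps $f^{*}$ with $f$ inert. (One could instead argue by hand, restricting $N^{A_{\infty}}_{\mathbb{K}}(\mathcal{A})_{\bullet}$ along $\fint^{op}\hookrightarrow \nec^{op}$, equipping it with the comultiplications indexed by inert maps, and checking strong unitality and colax monoidality directly; but the composition-isomorphism criterion of Proposition \ref{Prop:tempnec} makes this superfluous.)
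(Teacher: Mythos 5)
Your proposal is correct and follows exactly the paper's own route: identify templicial vector spaces with necklace categories whose composition maps are isomorphisms via Proposition \ref{Prop:tempnec}, and observe that Lemma \ref{lemma:TANpreservesbycomp} together with Construction \ref{Con:NerveOnCats} supplies precisely this property. The paper's proof is a two-line version of the same argument; your extra remarks on the counit at $\Delta^{0}$ and on the dictionary between necklace maps and the templicial structure maps are accurate but not needed here.
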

\begin{proof}
    Under the fully faithful embedding $(-)^{nec}: \ts\Mod(\mathbb{K})\hookrightarrow \mathbb{K}\Cat_{\nec}$, we may identify templicial vector spaces with necklace categories whose composition maps are isomorphisms (Proposition \ref{Prop:tempnec}). Hence, this is immediate from the construction.
\end{proof}

\begin{Def}
    We refer to the templicial vector space $N^{A_{\infty}}_{\mathbb{K}}(\mathcal{A})$ as the \emph{templicial $A_{\infty}$-nerve} of $\mathcal{A}$.
\end{Def}

\begin{Exs}
Let us make the face, degeneracy and comultiplication maps in the templicial vector space $N^{A_\infty}_{\mathbb{K}}(\aaa)$ more explicit.

Since the face maps $d_{j}$ and comultiplication maps $\mu_{p,q}$ are parametrized by injective necklace maps ($\delta_{j}$ and $\nu_{p,q}$ respectively), they are simply described by
\begin{itemize}
    \item for all $0 < j < n$:
    $$
    d_{j}: N^{A_\infty}_\mathbb{K}(\aaa)_{n}\rightarrow N^{A_\infty}_\mathbb{K}(\aaa)_{n-1}: y\mapsto \left(y_{\delta_{j}\circ g}\right)_{g: U\hookrightarrow \Delta^{n-1}}
    $$
    \item for all $p,q > 0$:
    $$
    \mu_{p,q}: N^{A_\infty}_\mathbb{K}(\aaa)_{p+q}\rightarrow N^{A_\infty}_\mathbb{K}(\aaa)_{p}\otimes_{\Ob \aaa}N^{A_\infty}_\mathbb{K}(\aaa)_{q}: y\mapsto (y_{\nu_{p,q}\circ g})_{g: U\hookrightarrow \Delta^{p}\vee \Delta^{q}}
    $$
    \noindent where we've identified $N^{A_\infty}_\mathbb{K}(\aaa)_{p}\otimes_{\Ob \aaa}N^{A_\infty}_\mathbb{K}(\aaa)_{q}$ with $N^{A_{\infty}}_\mathbb{K}(\aaa)_{\Delta^{p}\vee \Delta^{q}}$ under the isomorphism of Lemma \ref{lemma:TANpreservesbycomp}.
\end{itemize}
The degeneracy maps $s_{i}$ on the other hand are parametrized by the surjective maps $\sigma_{i}$. Given an injective necklace map $g: (U,p)\hookrightarrow \Delta^{n+1}$, factor $\sigma_{i}g = g'e$ with $g'$ injective and $e$ active surjective. Then $e = \id_{U}$ if and only if $g: [p]\hookrightarrow [n+1]$ factors through $\delta_{i}$ or $\delta_{i+1}$, which is equivalent to $\{i,i+1\}\not\subseteq g([p])$. Further, if $e\neq \id_{U}$, then it is spine collapsing if and only if $U = U_{1}\vee \Delta^{1}\vee U_{2}$ and the joints of $\Delta^{1}$ are sent to $i$ and $i+1$ by $g$. This is equivalent to $\{i,i+1\}\subseteq g(U)$. In this case, $e = \id\vee \sigma_{0}\vee \id$ and hence we find
\begin{itemize}
    \item for all $0\leq i\leq n$:
    $$
    s_{i}: N^{A_\infty}_\mathbb{K}(\aaa)_n\rightarrow N^{A_\infty}_\mathbb{K}(\aaa)_{n+1}: y\mapsto s_{i}(y)
    $$
    where for all injective $g: U\hookrightarrow \Delta^{n+1}$:
    $$
    s_{i}(y)_{g} =
    \begin{cases}
    y_{\sigma_i\circ g} & \{i,i+1\}\not\subset g([p])\\
    (\id \otimes_{\Ob \aaa} u \otimes_{\Ob \aaa} \id)(y_{g'}) & \{i,i+1\}\subseteq g(U)\\
    0 & \text{otherwise}\\
    \end{cases}
    $$
\end{itemize}
\end{Exs}

\subsection{Templicial maps associated to $A_{\infty}$-functors}

Given an $A_{\infty}$-functor, we construct a necklicial functor between the associated templicial $A_{\infty}$-nerves. Since $(-)^{nec}$ is fully faithful, this is in fact a templicial map. In this subsection, we show that this construction defines a functor
$$
N^{A_{\infty}}_{\mathbb{K}}: \suAinftyCat \rightarrow \ts\Mod(\mathbb{K})
$$
For now, let us fix an $A_{\infty}$-functor $f = (f_{0},f_{1},f_{2},\dots): \mathcal{A}\rightarrow \mathcal{B}$ between (small) $\mathcal{A}_{\infty}$-categories, where $f_{0}: \Ob(\mathcal{A})\rightarrow \Ob(\mathcal{B})$ denotes the object map.

\begin{Not}\label{notation:tempmapoffunctor}
    Given $T\in \nec$ and $a,b\in \Ob(\mathcal{A})$, we define a $\mathbb{K}$-linear map
    $$
    f_{T}: \bigoplus_{\substack{g: U\hookrightarrow T\\ \text{injective}}}(s\mathcal{A})_{U}(a,b)\rightarrow \bigoplus_{\substack{g: U\hookrightarrow T\\ \text{injective}}}(s\mathcal{B})_{U}(f_{0}(a),f_{0}(b)): y\mapsto f_{T}(y)
    $$
    as follows. Let $y\in \bigoplus_{U\hookrightarrow T}(s\mathcal{A})_{U}(a,b)$, and let $g: U\hookrightarrow T$ be an injective necklace map. Then we set
    $$
    f_{T}(y)_g = \sum_{\substack{\nu: S \hookrightarrow U\\ \text{inert}}}(-1)^{\epsilon(S_1)+\cdots + \epsilon(S_l)+\varphi(\nu)} (f_{\ell(S_1)}\otimes \cdots \otimes f_{\ell(S_l)})(y_{g\circ \nu})\in (s\mathcal{B})_{U}(f_{0}(a),f_{0}(b))
    $$
    where for $U = \Delta^{n_{1}}\vee \dots\vee \Delta^{n_{l}}$, we've written $S_{1},\dots,S_{l}$ for the unique necklaces such that $\nu$ is the wedge product of inert maps $S_{i}\hookrightarrow \Delta^{n_{i}}$. Note that then $f_{\ell(S_{i})}: (s\mathcal{A})_{S_{i}}\rightarrow f^{*}_{0}(s\mathcal{B})_{n_{i}}$ so that $f_{T}$ is well-defined.
    
    In particular, if $T = \Delta^{0}$, we have that $f_{\Delta^{0}}$ is given by the identity on $\mathbb{K} = (s\mathcal{A})_{\Delta^{0}}(a,a) = (s\mathcal{B})_{\Delta^{0}}(f_{0}(a),f_{0}(a))$.
\end{Not}

\begin{Lem}\label{lemma:inertmapscombinatorics}
Let $n,r > 0$ be integers. Consider the following sets
\begin{align*}
    \mathcal{S}_{1} &= \left\lbrace (\nu,\mu_{1},\dots,\mu_{r})\,\middle\vert\,\nu: \Delta^{m_{1}}\vee \dots \vee \Delta^{m_{r}}\hookrightarrow \Delta^{n}, \mu_{j}: T_{j}\hookrightarrow \Delta^{m_{j}}\text{ inert}\right\rbrace\\
    \mathcal{S}_{2} &= \left\lbrace (\eta, i_{1},\dots, i_{r})\,\middle\vert\, \eta: T\hookrightarrow \Delta^{n}\text{ inert}, i_{j} > 0, i_{1} + \dots + i_{r} = \ell(T)\right\rbrace\\
    \mathcal{T}_{1} &= \left\lbrace (\mu,k,\nu)\,\middle\vert\, \mu: V\hookrightarrow \Delta^{n}\text{ inert}, 1\leq k\leq \ell(V), \nu: W\hookrightarrow \Delta^{n_{k}}\text{ inert}\right\rbrace\\
    \mathcal{T}_{2} &= \left\lbrace (\eta,r,s,t)\,\middle\vert\, \eta: T\hookrightarrow \Delta^{n}\text{ inert}, r,t\geq 0, s > 0, r + s + t = \ell(T)\right\rbrace\\
    \mathcal{U}_{1} &= \left\lbrace (\mu,j)\,\middle\vert\, \mu: V\hookrightarrow \Delta^{n}\text{ inert}, 1\leq j\leq \dim(V)\right\rbrace\\
    \mathcal{U}_{2} &= \left\lbrace (\mu',i)\,\middle\vert\, \mu': V'\hookrightarrow \Delta^{n-1}\text{ inert}, 0 < i < n\right\rbrace.
\end{align*}
Then the following maps are bijections
\begin{enumerate}[(a)]
    \item $\mathcal{S}_{1}\rightarrow \mathcal{S}_{2}: (\nu,\mu_{1},\dots,\mu_{r})\mapsto (\nu\circ (\mu_{1}\vee \dots \vee \mu_{r}), \ell(T_{1}),\dots, \ell(T_{r}))$
    \item $\mathcal{T}_{1}\rightarrow \mathcal{T}_{2}: (\mu,k,\nu)\mapsto (\mu\circ_{k} \nu,k-1,\ell(W),\ell(V)-k)$
    \item $\mathcal{U}_{1}\rightarrow \mathcal{U}_{2}: (\mu,j)\mapsto (\mu',i_{j})$
\end{enumerate}
where in $(c)$, $\mu': V\hookrightarrow \Delta^{n-1}$ is the unique inert map such that $\mu\circ \delta_{j} = \delta_{i_{j}}\circ \mu'$, and we've written $V^{c} = \{i_{1} < \dots < i_{d}\}$ with $d = \dim(V)$.
\end{Lem}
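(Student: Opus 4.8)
The plan is to prove $(a)$, $(b)$ and $(c)$ in the same spirit, working entirely inside the combinatorial model of $\nec$. The key preliminary observation is that a necklace map is inert precisely when its underlying map in $\fint$ is an identity; consequently there is at most one inert map between any two necklaces, and one $S\hookrightarrow U$ exists if and only if $\Vert S\Vert=\Vert U\Vert$ and $U\subseteq S$. In particular, an inert map $\nu:\Delta^{m_{1}}\cvee\Delta^{m_{r}}\hookrightarrow\Delta^{n}$ carries no more information than an ordered composition $(m_{1},\dots,m_{r})$ of $n$ into positive parts, and an inert map $\mu:T\hookrightarrow\Delta^{m}$ carries no more than a necklace $T$ with $\Vert T\Vert=m$. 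Since composites and wedges of inert maps are inert, every composite appearing in the statement is inert, hence determined by its source and target. I would also record at the outset that $\ell$ and $\Vert\cdot\Vert$ are additive for $\vee$ and that each necklace has a unique expression $\Delta^{q_{1}}\cvee\Delta^{q_{\ell(T)}}$.

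Granting this, each of the three maps has an evident inverse which I would write down and then check on both composites. For $(a)$: send $(\eta:T\hookrightarrow\Delta^{n},\,i_{1},\dots,i_{r})$ to the data in which $T_{j}$ is the sub-necklace of $T$ on its beads numbered $i_{1}+\dots+i_{j-1}+1,\dots,i_{1}+\dots+i_{j}$ (a legitimate grouping since $i_{j}\geq 1$ and $\sum_{j}i_{j}=\ell(T)$, and recovering $T=T_{1}\vee\dots\vee T_{r}$), with $\mu_{j}:T_{j}\hookrightarrow\Delta^{\Vert T_{j}\Vert}$ and $\nu:\Delta^{\Vert T_{1}\Vert}\cvee\Delta^{\Vert T_{r}\Vert}\hookrightarrow\Delta^{n}$ the unique inert maps (note $\sum_{j}\Vert T_{j}\Vert=n$). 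Grouping beads into $r$ consecutive blocks and forming the wedge $T_{1}\vee\dots\vee T_{r}$ are mutually inverse, and the inert maps involved are pinned down by source and target, so the two composites are identities. Part $(b)$ is the same idea with three blocks: the inverse of $(\eta:T\hookrightarrow\Delta^{n},r,s,t)$ takes $W$ to be the sub-necklace of $T$ on beads $r+1,\dots,r+s$ (legitimate as $r+s+t=\ell(T)$, $s\geq 1$), $V$ to be $T$ with that block collapsed to the single bead $\Delta^{\Vert W\Vert}$, and $k=r+1$, with $\nu,\mu$ the unique inert maps; one then reads off $k-1=r$, $\ell(W)=s$ and $\ell(V)-k=t$.

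For $(c)$ I would first note that $V^{c}=[n]\setminus V$ lists exactly the interior (non-joint) vertices of the beads of $V$, so that deleting the $j$th of them is a map $\delta_{j}$ whose underlying map in $\fint$ is the coface $\delta_{i_{j}}:[n-1]\to[n]$; since a necklace map is determined by its underlying map in $\fint$ together with source and target, the factorization $\mu\circ\delta_{j}=\delta_{i_{j}}\circ\mu'$ holds automatically with $\mu'$ the unique inert map out of the necklace obtained from $V$ by deleting the vertex $i_{j}$ (which has spine length $n-1$). Conversely, given $(\mu':V'\hookrightarrow\Delta^{n-1},i)$ with $0<i<n$, set $V:=\delta_{i}(V')\subseteq[n]$ (re-insert a vertex into the bead of $V'$ straddling position $i$), let $\mu:V\hookrightarrow\Delta^{n}$ be the unique inert map, and let $j$ be the position of $i$ in $V^{c}$ (indeed $i\in V^{c}$, as $i\notin\im\delta_{i}$ and $0<i<n$); the cosimplicial identities for the cofaces then show the two assignments are mutually inverse, and a quick count of $\#\mathcal{U}_{1}=\sum_{V}\dim(V)$ against $\#\mathcal{U}_{2}$ serves as a consistency check. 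I expect the only genuine difficulty across all three parts to be organizational — keeping bead-joints and interior vertices apart and tracking the index shifts — with $(c)$ the most delicate because of the interaction with the face maps, though after the reduction above that part too becomes a routine computation in $\fint$.
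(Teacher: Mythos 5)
Your proof is correct, and it supplies exactly the details the paper omits: the paper's own ``proof'' is just the sentence ``This is a quick verification.'' Your reduction to the observation that an inert map is determined by its source (a necklace with prescribed spine length refining the target), so that all three bijections become block-groupings of beads (for (a) and (b)) and the matching of interior vertices of $V$ with cofaces $\delta_{i}$, $0<i<n$ (for (c)), is the intended verification.
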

\begin{proof}
    This is a quick
    verification.
\end{proof}

\begin{Prop}\label{proposition:TANpreservedbyfunctor}
Let $T$ be a necklace and $a,b\in \Ob(\mathcal{A})$. If $y\in \bigoplus_{U\hookrightarrow T}(s\mathcal{A})_{U}(a,b)$ satisfies the (TAN) relations, then so does $f_{T}(y)$.
\end{Prop}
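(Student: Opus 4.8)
The plan is to verify the (TAN) relations for $f_{T}(y)$ directly by expanding the left-hand side, applying the definition of $f_{T}$ in Notation \ref{notation:tempmapoffunctor}, and then reorganizing the resulting double sums over chains of inert maps using the combinatorial bijections of Lemma \ref{lemma:inertmapscombinatorics}. Since the (TAN) relation at an injective necklace map $g: U\hookrightarrow T$ decomposes (Remark, equation \eqref{equation:TANdecomp}) into one equation per bead $\Delta^{n_{k}}$ of $U$, and since $f_{T}$ acts "bead by bead" via the tensor factors $f_{\ell(S_{i})}$, it will suffice to treat a single bead $\Delta^{n}$ at a time. In other words, I would reduce to showing the following: for a collection $y$ over $\Delta^{n}$ satisfying the (TAN) relations, the image under the single-bead version of $f$ satisfies
\begin{equation*}
\sum_{\substack{\nu: S\hookrightarrow \Delta^{n}\\ \text{inert}}}(-1)^{\epsilon(S)}\, m^{\bbb}_{\ell(S)}\bigl(f_{\ell(S_{1})}\otimes \cdots\otimes f_{\ell(S_{\ell(S)})}\bigr)(\cdots) \;=\; \sum_{j=1}^{n-1}(-1)^{j-1}\, f_{\Delta^{n-1}}(y)_{\delta_{j}\circ(-)},
\end{equation*}
with the bookkeeping signs $\varphi$, $\epsilon$ inserted appropriately.

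The heart of the argument is matching this against the defining $A_{\infty}$-functor relation \eqref{Eq:AooMorRel}. Concretely, I would expand $m^{\bbb}_{\ell(S)}\circ(f_{i_{1}}\otimes\cdots\otimes f_{i_{r}})$ (the right-hand side of \eqref{Eq:AooMorRel}, summed over compositions of $\ell(T)$), substitute for each $y_{g\circ\nu}$ using the (TAN) relation \eqref{equation:TAN} applied to $y$ itself, and then use bijection (a) of Lemma \ref{lemma:inertmapscombinatorics} to re-index the sum over pairs (outer inert map $\nu$, inner inert maps $\mu_{j}$) as a sum over a single inert map $\eta$ together with a choice of composition $(i_{1},\dots,i_{r})$ of $\ell(\eta$-target$)$. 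Dually, the terms produced by the face-map part $\sum_{j}(-1)^{j-1}y_{g\circ\delta_{j}}$ of the (TAN) relation get re-indexed via bijection (c), and the terms involving $m^{\aaa}_{s}$ composed inside $f_{r+1+t}$ (the left-hand side of \eqref{Eq:AooMorRel}) get re-indexed via bijection (b). After these substitutions, both sides of the desired identity become sums over the same index set, and the proof reduces to checking that the accumulated signs agree. This is where Lemma \ref{Lem:MagicFormula} is essential: it converts the geometric signature $\varphi$ into the difference of combinatorial signatures $\epsilon$, so that the signs $\epsilon(S_{i})$, $\varphi(\nu)$, $\varphi_{k}(\nu)$ appearing in the definitions of $f_{T}$ and of the (TAN) relation can be collected and compared with the signs $\epsilon_{g}(i_{1},\dots,i_{r})$ and $(-1)^{r+st}$ from \eqref{Eq:AooMorRel}.

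The main obstacle I anticipate is precisely this sign verification: three distinct sources of signs ($\varphi$ from $f_{T}$, $\epsilon(S)+\varphi_{k}(\nu)$ from the (TAN) relation, and $\epsilon_{g}(i_{1},\dots,i_{r})$, $(-1)^{r+st}$ from the $A_{\infty}$-functor equation) must telescope exactly, and keeping track of how the signatures split under the wedge decompositions $S = S_{1}\vee\cdots\vee S_{l}$ and under the factorizations $\mu\circ_{k}\nu$, $\mu\circ\delta_{j}$ requires careful and repeated application of Lemma \ref{Lem:MagicFormula}. A secondary, more mechanical point is handling unitality: terms of \eqref{Eq:AooMorRel} where some $m^{\aaa}_{s}$ with $s=1$ appears, or where units are inserted, must be matched with the corresponding degenerate contributions, using \eqref{eq:AooUnit} and \eqref{eq:AooFunctorUnit}; but since $f$ and $y$ both already satisfy their respective unital constraints, these should drop out cleanly. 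I would organize the write-up by first stating the single-bead reduction as a sublemma, then doing the index manipulation abstractly (citing Lemma \ref{lemma:inertmapscombinatorics}), and finally devoting a self-contained paragraph to the sign computation via Lemma \ref{Lem:MagicFormula}.
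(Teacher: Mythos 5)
Your proposal follows essentially the same route as the paper: reduce to the (TAN) relation at the identity of a single bead $\Delta^{n}$ (the paper does this via $f_{T}(y)_{g} = f_{U}(g^{*}(y))_{\id_{U}}$, Lemma \ref{lemma:TANpreservedbystructuremaps} and the wedge compatibility of Lemma \ref{lemma:TANpreservesbycomp}), then expand the left-hand side, apply the $A_{\infty}$-functor relation together with bijection (a) of Lemma \ref{lemma:inertmapscombinatorics} and Lemma \ref{Lem:MagicFormula} for the signs, use bijection (b) with the (TAN) relation for $y$, and conclude with bijection (c). The key lemmas and their roles are identified exactly as in the paper's proof.
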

\begin{proof}
Let $g: U\hookrightarrow T$ be an injective necklace map. Since for any inert necklace map $\nu: S\hookrightarrow U$, we have $g^{*}(y)_{\nu} = y_{g\circ\nu}$, it follows that $f_{T}(y)_{g} = f_{U}(g^{*}(y))_{\id_{U}}$. Hence by Lemma \ref{lemma:TANpreservedbystructuremaps}, it suffices to check that $f_{T}(y)$ satisfies the (TAN) relation at $\id_{T}$. Suppose $T = T_{1}\vee T_{2}$, then we may assume that $y = y^{(1)}\otimes y^{(2)}$ for some $y^{(i)}\in N^{A_{\infty}}_{k}(\mathcal{A})_{T_{i}}$ by Lemma \ref{lemma:TANpreservesbycomp}. Since for any inert maps $\nu_{i}: S_{i}\hookrightarrow T_{i}$, we have $y_{\nu_{1}\vee \nu_{2}} = y^{(1)}_{\nu_{1}}\otimes y^{(2)}_{\nu_{2}}$, it follows that $f_{T}(y)_{\id_{T}} = f_{T_{1}}(y^{(1)})_{\id_{T_{1}}}\otimes f_{T_{2}}(y^{(2)})_{\id_{T_{2}}}$. Hence, it suffices to check that $f_{\Delta^{n}}(y)$ satisfies (TAN) relation \eqref{equation:TAN} at $\id_{\Delta^{n}}$ for all $n > 0$.
\begin{comment}
 \par $U=U_1\vee U_2\implies S=S_1\vee S_2$ $S_1=S_1^1\vee \cdots \vee S_1^{l_1}$ and $S_2=S_2^1\vee \cdots \vee S_2^{l_2}$
\begin{align*}
    &\sum_{1\leq i < j \leq l} \dim(S_i)(\ell(S_j)-1)-\dim(S_1)\left(\sum_{i=l_1+1}^{l}(\ell(S_i)-1)\right)=\\
    &\sum_{i=1}^{l_1}\sum_{j=i+1}^{l} \dim(S_i)(\ell(S_j)-1)+
    \sum_{i=l_1+1}^{l}\sum_{j=i+1}^{l} \dim(S_i)(\ell(S_j)-1)-\sum_{i=1}^{l_1}\sum_{j=l_1+1}^{l}\dim(S_i)(\ell(U_j)-1)\\
    &\sum_{i=1}^{l_1}\sum_{j=i+1}^{l_1} 
    \dim(S_i)(\ell(S_j)-1)+
    \sum_{i=l_1+1}^{l}\sum_{j=i+1}^{l} \dim(S_i)(\ell(S_j)-1)\\
    &\sum_{1\leq i < j \leq l_1} 
    \dim(S_1^i)(\ell(S_1^j)-1)+
    \sum_{1\leq i < j \leq l_2} \dim(S_2^i)(\ell(S_2^j)-1)\\
\end{align*}
\begin{align*}
    &\epsilon(T_1\vee \cdots \vee T_l)-\epsilon(T_1)-\cdots - \epsilon(T_l)=\sum_{i=1}^l \dim(T_i)(L-L_i)\\
    &\epsilon(||T_1||,...,||T_l||)-\epsilon(\ell(T_1),...,\ell(T_l))=\sum_{i=1}^l \dim(T_i)(l-i)\\
    &\sum_{1\leq i < j \leq l} \dim(T_i)(\ell(T_j)-1)=\sum_{i=1}^l \dim(T_i)(L-L_i)-\sum_{i=1}^l \dim(T_i)(l-i)\\
\end{align*}
\begin{align*}
    &\epsilon(T_1\vee \cdots \vee T_l)-\epsilon(T_1)-\cdots - \epsilon(T_l)=\sum_{i=1}^l \dim(T_i)(L-L_i)\\
    &\epsilon(||T_1||,...,||T_l||)-\epsilon(\ell(T_1),...,\ell(T_l))=\sum_{i=1}^l \dim(T_i)(l-i)\\
    &\sum_{1\leq i < j \leq l} \dim(T_i)(\ell(T_j)-1)=\sum_{i=1}^l \dim(T_i)(L-L_i)-\sum_{i=1}^l \dim(T_i)(l-i)\\
\end{align*}
   
\end{comment}
\par The left hand side of the (TAN) relation of $f_{\id_{\Delta^{n}}}(y)$ at $\id_{\Delta^{n}}$ is
\begin{align*}
    \sum_{\substack{\nu: S\hookrightarrow \Delta^{n}\\ \text{inert}}}&(-1)^{\epsilon(S)} m_{\ell(S)}(f_{\id_{\Delta^{n}}}(y)_{\nu}) = \\&=\sum_{\substack{\nu: S\hookrightarrow \Delta^{n}\\ \mu: T\hookrightarrow S\\ \text{inert}}}(-1)^{\epsilon(S)+\epsilon(T_1)+\cdots +\epsilon(T_l)+\varphi(\mu)} m_{r}(f_{\ell(T_{1})}\otimes \dots \otimes f_{\ell(T_{r})})(y_{\nu\circ \mu})
\end{align*}
\begin{comment}
 $$
\sum_{\substack{\nu: S\hookrightarrow \Delta^{n}\\ \text{inert}}}(-1)^{\epsilon(S)} m_{\ell(S)}(f_{\id_{\Delta^{n}}}(y)_{\nu}) = \sum_{\substack{\nu: S\hookrightarrow \Delta^{n}\\ \mu: T\hookrightarrow S\\ \text{inert}}}(-1)^{\epsilon(S)+\epsilon(T_1)+\cdots +\epsilon(T_l)+\varphi(\mu)} m_{r}(f_{\ell(T_{1})}\otimes \dots \otimes f_{\ell(T_{r})})(y_{\nu\circ \mu})
$$   
\end{comment}

where we've written $\mu = \mu_{1}\vee \dots \vee \mu_{r}$ for unique inert maps $\mu_{i}: T_{i}\hookrightarrow \Delta^{n_{i}}$. By Lemma \ref{lemma:inertmapscombinatorics}(a), Lemma \ref{Lem:MagicFormula} and since $f$ is an $A_{\infty}$-functor, this equals
\begin{align*}
 \sum_{\substack{\eta: T\hookrightarrow \Delta^{n}\\ \text{inert}\\ r > 0, i_{1},\dots ,i_{r} > 0\\ i_{1} + \dots + i_{r} = \ell(T)}} &(-1)^{\epsilon(T)+\epsilon(i_1,...,i_r)}  m_{r}(f_{i_{1}}\otimes \dots \otimes f_{i_{r}})(y_{\eta}) =\\&= \sum_{\substack{\eta: T\hookrightarrow \Delta^{n}\\ \text{inert}\\ r,t\geq 0, s > 0\\ r + s + t = \ell(T)}}(-1)^{\epsilon(T)+r+st+1} f_{r + 1 + t}(\id^{\otimes r}\otimes m_{s}\otimes \id^{\otimes t})(y_{\eta})   
\end{align*}

Then by Lemma \ref{lemma:inertmapscombinatorics}(b) and the (TAN) relation of $y$ at $\mu$, this is equal to
\begin{align*}
    \sum_{\substack{\mu: V\hookrightarrow \Delta^{n}\\ \text{inert}}}\sum_{\substack{1\leq k\leq l\\ \nu: W\hookrightarrow \Delta^{n_{k}}\\ \text{inert}}}&(-1)^{\epsilon(V)-\ell(V)+\epsilon(W)+\varphi_k(\nu)}f_{l}(\id^{\otimes k-1}\otimes m_{\ell(W)}\otimes \id^{\otimes l-k})(y_{\mu\circ_{k} \nu}) = \\=&\sum_{\substack{\mu: V\hookrightarrow \Delta^{n}\\ \text{inert}}}\sum_{j = 1}^{d} (-1)^{\epsilon(V)-\ell(V)+j}f_{\ell(V)}(y_{\mu\circ \delta_{i_{j}}})
\end{align*}
where we've written $V = \Delta^{n_{1}}\vee \dots \vee \Delta^{n_{l}}$ and $V^{c} = \{i_{1} < \dots < i_{d}\}$ so that $l = \ell(V)$ and $d = \dim(V)$ for an inert $\mu: V\hookrightarrow \Delta^{n}$. Finally, by Lemma \ref{lemma:inertmapscombinatorics}(c), we find that this is equal to the right hand side of the (TAN) relation of $f_{\id_{\Delta^{n}}}(y)$ at $\id_{\Delta^{n}}$:
$$
\sum_{i=1}^{n-1}\sum_{\mu': V'\hookrightarrow \Delta^{n-1}} (-1)^{\epsilon(V)+i-1}f_{\ell(V')}(y_{\delta_{i}\circ \mu'}) = \sum_{i=1}^{n}(-1)^{i-1}f_{\id_{\Delta^{n}}}(y)_{\delta_{i}}
$$
\end{proof}

\begin{Lem}\label{lemma:functorisnatural}
The morphisms $(f_{T})_{T\in \nec}$ define a natural transformation $N^{A_{\infty}}_{\mathbb{K}}(\mathcal{A})_{\bullet}\rightarrow f^{*}_{0}N^{A_{\infty}}_{\mathbb{K}}(\mathcal{B})_{\bullet}$ between functors $\nec^{op}\rightarrow \mathbb{K}\Quiv_{\Ob(\mathcal{A})}$.
\end{Lem}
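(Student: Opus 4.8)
The plan is as follows. By Proposition~\ref{proposition:TANpreservedbyfunctor} each $f_{T}$ corestricts to a quiver morphism $N^{A_\infty}_{\mathbb{K}}(\mathcal{A})_{T}\to f_0^{*}N^{A_\infty}_{\mathbb{K}}(\mathcal{B})_{T}$, so it only remains to verify that for every necklace map $h\colon T\to T'$ the square
\[
\begin{tikzcd}
N^{A_\infty}_{\mathbb{K}}(\mathcal{A})_{T'} \arrow[r,"f_{T'}"] \arrow[d,"h^{*}"'] & f_0^{*}N^{A_\infty}_{\mathbb{K}}(\mathcal{B})_{T'} \arrow[d,"h^{*}"] \\
N^{A_\infty}_{\mathbb{K}}(\mathcal{A})_{T} \arrow[r,"f_{T}"'] & f_0^{*}N^{A_\infty}_{\mathbb{K}}(\mathcal{B})_{T}
\end{tikzcd}
\]
commutes. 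The class of necklace maps with this property is closed under composition, so by the epi--mono factorization of Proposition~\ref{Prop:Factorization} it is enough to treat injective maps and active surjective maps separately.

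Suppose first that $h=\iota\colon T\hookrightarrow T'$ is injective. Recall from the proof of Proposition~\ref{proposition:TANpreservedbyfunctor} the identity $f_{T}(y)_{g}=f_{U}\bigl(g^{*}(y)\bigr)_{\id_{U}}$, valid for every injective $g\colon U\hookrightarrow T$ and all $y$, as well as $\iota^{*}(z)_{g}=z_{\iota\circ g}$. Unwinding Notation~\ref{notation:tempmapoffunctor}, both $\bigl(\iota^{*}(f_{T'}(y))\bigr)_{g}$ and $\bigl(f_{T}(\iota^{*}(y))\bigr)_{g}$ are seen to equal
\[
\sum_{\substack{\nu\colon S\hookrightarrow U\\ \text{inert}}}(-1)^{\epsilon(S_{1})+\dots+\epsilon(S_{l})+\varphi(\nu)}\,\bigl(f_{\ell(S_{1})}\otimes\dots\otimes f_{\ell(S_{l})}\bigr)(y_{\iota\circ g\circ\nu}),
\]
so the square commutes.

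Now suppose $h=e\colon T\twoheadrightarrow T'$ is active surjective. Using once more the identity $f_{T}(y)_{g}=f_{U}(g^{*}(y))_{\id_{U}}$ together with $g^{*}(e^{*}(y))=(e\circ g)^{*}(y)$ and the epi--mono factorization of $e\circ g$, the naturality square collapses to the single identity
\[
f_{U}\bigl(e^{*}(w)\bigr)_{\id_{U}}=\overline{e}\bigl(f_{U'}(w)_{\id_{U'}}\bigr)
\]
to be checked for every active surjective $e\colon U\twoheadrightarrow U'$ and every $w$. Writing $e=e_{1}\vee\dots\vee e_{l}$, the functor $\overline{(-)}$ is strong monoidal, $e^{*}$ is compatible with the wedge isomorphisms of Lemma~\ref{lemma:TANpreservesbycomp} by Construction~\ref{Con:NerveOnCats}, and $f_{U_{1}\vee U_{2}}(-)_{\id}$ splits as $f_{U_{1}}(-)_{\id}\otimes f_{U_{2}}(-)_{\id}$ (as observed in the proof of Proposition~\ref{proposition:TANpreservedbyfunctor}); hence the identity decomposes bead by bead and we may assume $e\colon\Delta^{n}\twoheadrightarrow\Delta^{m}$ is a single surjection. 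Factoring $e$ into codegeneracies and using closure under composition again, it suffices to take $e=\sigma_{i}\colon\Delta^{n}\twoheadrightarrow\Delta^{n-1}$. For a single bead $\varphi$ vanishes, and the identity can be read off directly from the explicit formula for $s_{i}=\sigma_{i}^{*}$ on $N^{A_\infty}_{\mathbb{K}}(\mathcal{A})$ recorded after its construction, combined with the unit axioms \eqref{eq:AooFunctorUnit} for $f$: the clause $f_{1}u_{x}=u_{f_0(x)}$ accounts for the unit-insertion component, the clauses $f_{k}(\cdots\otimes u_{x}\otimes\cdots)=0$ for $k>1$ kill the degenerate components, and on the remaining components $\sigma_{i}^{*}$ acts as an inert operation, so they are covered by the naturality for inert maps already established above.

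I expect the main obstacle to be the last reduction step. As in the proof of Lemma~\ref{lemma:TANpreservedbystructuremaps}, verifying the identity for $e=\sigma_{i}$ requires a short case distinction, matching the epi--mono factorization of $\sigma_{i}\circ\nu$ and $\sigma_{i}\circ\delta_{j}$ on the $\mathcal{A}$-side with the corresponding factorizations on the $\mathcal{B}$-side according to whether the codegeneracy is spine collapsing. A secondary point is the bead-by-bead splitting, which rests on the Koszul-type cross terms in the signature $\varphi$ behaving as in Lemma~\ref{Lem:MagicFormula}. Neither point is conceptually hard, but both demand attention to signs.
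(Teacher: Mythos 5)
Your proposal is correct and follows essentially the same route as the paper: reduce via the identity $f_{T}(y)_{g}=f_{U}(g^{*}(y))_{\id_{U}}$ and the wedge splitting of $f_{T}(-)_{\id_T}$ to a single active surjection between simplices, then dispose of it using the vanishing of $\overline{(-)}$ on non--spine-collapsing maps together with the strict-unitality axioms \eqref{eq:AooFunctorUnit} of $f$. The only cosmetic difference is that you further factor the surjection into elementary codegeneracies $\sigma_{i}$, whereas the paper runs a single case analysis on a general $e\colon\Delta^{n}\twoheadrightarrow\Delta^{m}$ (cases $e=\id$, $e=\sigma_{0}\colon\Delta^{1}\twoheadrightarrow\Delta^{0}$, and ``otherwise'').
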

\begin{proof}
Let $h: T\rightarrow T'$ and $g: U\hookrightarrow T$ be necklace maps with $g$ injective. We wish to show that for all $a,b\in \Ob(\mathcal{A})$ and $y\in N^{A_{\infty}}_{\mathbb{K}}(\mathcal{A})_{T'}(a,b)$, we have
\begin{equation}\label{equation:functorisnatural}
    h^{*}(f_{T'}(y))_{g} = f_{T}(h^{*}(y))_{g}  
\end{equation}
As before, for any $z\in N^{A_{\infty}}_{\mathbb{K}}(\mathcal{A})_{T}(a,b)$ we have $f_{T}(z)_{g} = f_{U}(g^{*}(z))_{\id_{U}}$. It follows that we may assume $h = e$ is an epimorphism and $g = \id_{T}$. If $T = T_{1}\vee T_{2}$, then we may assume by Lemma \ref{lemma:TANpreservesbycomp} that $y = y^{(1)}\otimes y^{(2)}$ for some $y^{(i)}\in N^{A_{\infty}}_{k}(\mathcal{A})_{T_{i}}$. We can write $e = e^{(1)}\vee e^{(2)}$ for some epimorphisms $e^{(i)}: T_{i}\twoheadrightarrow T'_{i}$. Then as before, we have $f_{T'}(y)_{\id_{T}} = f_{T'_{1}}(y^{(1)})_{\id_{T'_{1}}}\otimes f_{T'_{2}}(y^{(2)})_{\id_{T_{2}}}$ and it follows that we may assume that $e: \Delta^{n}\twoheadrightarrow \Delta^{m}$ with $n > 0$, $m\geq 0$ and that $g = \id_{\Delta^{n}}$.

The rest of the proof follows in a similar way to the proof of Lemma \ref{lemma:TANpreservedbystructuremaps}. We distinguish some cases:
\begin{enumerate}[1.]
    \item $e = \id_{\Delta^{n}}$: Then equation \eqref{equation:functorisnatural} holds trivially.
    \item $e = \sigma_{0}: \Delta^{1}\twoheadrightarrow \Delta^{0}$: Then equation \eqref{equation:functorisnatural} is equivalent to $1_{f_{0}(a)} = f_{1}(1_{a})$ if $a = b$, and trivially true otherwise. Note that this holds since $f$ is a functor of strictly unital $A_{\infty}$-categories.
    \item Otherwise, let $\nu: S\hookrightarrow \Delta^{n}$ be an inert necklace map. We can factor $e\circ \nu = \nu'\circ e_{S}$ for some unique epimorphism $e_{S}: S\twoheadrightarrow S'$ and injective necklace map $\nu': S'\hookrightarrow \Delta^{m}$. Then $e_{S} = e_{1}\vee \dots \vee e_{\ell(S)}$ for some epimorphisms $e_{i}: \Delta^{n_{i}}\twoheadrightarrow \Delta^{m_{i}}$ with $n_{i} > 0$ and $m_{i}\geq 0$. If $\ell(S)=1$, then $\nu=\id_{\Delta^n}$, $e_S=e$, $\nu'=\id_{\Delta^m}$ and this term vanishes as $e\neq \id, \sigma_0$. Assume then that $\ell(S)>1$.
    Also as $e\neq \id_{\Delta^{n}}$, we have $e_{j}\neq \id_{\Delta^{n_{j}}}$ for at least one $j\in \{1,\dots ,\ell(S)\}$. Note that
    \begin{itemize}
        \item if $e_{j} = \sigma_{0}: \Delta^{1}\twoheadrightarrow \Delta^{0}$, then $\overline{e_{i}} = u$. Since $e\neq \sigma_{0}$ and thus $f_{\ell(S)}(\overline{e}_{1}\otimes \dots \otimes \overline{e}_{\ell(S)}) = 0$.
        \item if $e_{j}\neq \sigma_{0}$, then $\overline{e}_{j} = 0$ and thus $f_{\ell(S)}(\overline{e}_{1}\otimes \dots \otimes \overline{e}_{\ell(S)}) = 0$ as well.
    \end{itemize}
    If follows that both sides of equation \eqref{equation:functorisnatural} reduce to $0$.
\end{enumerate}
\end{proof}

\begin{Con}\label{Con:NerveOnMaps}
    Let $f: \mathcal{A}\rightarrow \mathcal{B}$ be an $A_{\infty}$-functor. We construct a necklicial functor $N^{A_{\infty}}_{\mathbb{K}}(f): N^{A_{\infty}}_{\mathbb{K}}(\mathcal{A})^{nec}\rightarrow N^{A_{\infty}}_{\mathbb{K}}(\mathcal{B})^{nec}$ between necklicial categories as follows.

    The object map of $N^{A_{\infty}}_{\mathbb{K}}(f)$ is simply given by $f_{0}: \Ob(\mathcal{A})\rightarrow \Ob(\mathcal{B})$. Then for any $a,b\in \Ob(\mathcal{A})$, we define $N^{A_{\infty}}_{\mathbb{K}}(f)$ on hom-objects by the natural transformation $N^{A_{\infty}}_{\mathbb{K}}(f)_{a,b}: N^{A_{\infty}}_{\mathbb{K}}(\mathcal{A})_{\bullet}(a,b)\rightarrow N^{A_{\infty}}_{\mathbb{K}}(\mathcal{B})_{\bullet}(f_{0}(a),f_{0}(b))$ of \ref{lemma:functorisnatural}.

    Finally, $N^{A_{\infty}}_{\mathbb{K}}(f)$ preserves the identities by definition. Let $g: U\hookrightarrow T_{1}\vee T_{2}$ be an injective necklace map with $g_{i}: U_{i}\hookrightarrow T_{i}$ unique such that $g = g_{1}\vee g_{2}$, and $l_{i} = \ell(U_{i})$. Note that for all inert maps $\nu_{k}: S_{k}\hookrightarrow \Delta^{n_{k}}$ into the beads of $U$, we have
    $$
    g\circ (\nu_{1}\vee \dots \vee \nu_{l_{1} + l_{2}}) = (g_{1}\circ (\nu_{1}\vee \dots \vee \nu_{l_{1}}))\vee (g_{2}\circ (\nu_{l_{1}+1}\vee \dots \vee \nu_{l_{1}+l_{2}}))
    $$
    By the definition, this shows $N^{A_{\infty}}_{\mathbb{K}}(f)$ also preserves the composition.
\end{Con}

\begin{Prop}
\label{Prop:NerveWellDefined}
The assignments $\mathcal{A}\mapsto N^{A_{\infty}}_{\mathbb{K}}(\mathcal{A})$ and $f\mapsto N^{A_{\infty}}_{\mathbb{K}}(f)$ of Constructions \ref{Con:NerveOnCats} and \ref{Con:NerveOnMaps} define a functor $N^{A_{\infty}}_{\mathbb{K}}: \suAinftyCat\rightarrow \ts\Mod(\mathbb{K})$.
\end{Prop}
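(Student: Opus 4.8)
The plan is to check the two functoriality axioms for $N^{A_{\infty}}_{\mathbb{K}}$; everything else -- that $N^{A_{\infty}}_{\mathbb{K}}(\mathcal{A})$ is a templicial vector space and that $N^{A_{\infty}}_{\mathbb{K}}(f)$ is a well-defined necklicial (hence templicial) map -- has already been established in Constructions \ref{Con:NerveOnCats} and \ref{Con:NerveOnMaps}, via Propositions \ref{proposition:Ainftynervestructuremaps}, \ref{proposition:TANpreservedbyfunctor} and Lemma \ref{lemma:functorisnatural}.

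Preservation of identities is immediate from Notation \ref{notation:tempmapoffunctor}. For the identity $A_{\infty}$-functor $\id_{\mathcal{A}}$ one has $(\id_{\mathcal{A}})_{1} = \id$ and $(\id_{\mathcal{A}})_{k} = 0$ for $k\neq 1$, so a summand indexed by an inert map $\nu = \mu_{1}\vee \dots\vee \mu_{l}: S\hookrightarrow U$ is nonzero only when every $\ell(S_{i}) = 1$, which forces $S_{i} = \Delta^{n_{i}}$, $\mu_{i} = \id$ and hence $\nu = \id_{U}$. Since $\epsilon(\Delta^{n_{i}}) = 0$ and $\varphi(\id_{U}) = 0$, the surviving term is exactly $y_{g}$; thus $(\id_{\mathcal{A}})_{T} = \id$ for all $T$ and $N^{A_{\infty}}_{\mathbb{K}}(\id_{\mathcal{A}})$ is the identity necklicial functor.

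Now fix $A_{\infty}$-functors $h: \mathcal{A}\rightarrow \mathcal{B}$ and $h': \mathcal{B}\rightarrow \mathcal{C}$, whose composite $h'\circ h$ is again an $A_{\infty}$-functor. The object maps compose strictly, so it suffices to prove that the induced maps on hom-objects agree, i.e. $(h'\circ h)_{T} = h'_{T}\circ h_{T}$ for every necklace $T$. I would follow the reduction strategy used in the proofs of Lemma \ref{lemma:functorisnatural} and Proposition \ref{proposition:TANpreservedbyfunctor}: using the identity $f_{T}(y)_{g} = f_{U}(g^{*}(y))_{\id_{U}}$, the naturality of Lemma \ref{lemma:functorisnatural} (so that $g^{*}\circ h_{T} = h_{U}\circ g^{*}$), and the compatibility of $N^{A_{\infty}}_{\mathbb{K}}(h)$, $N^{A_{\infty}}_{\mathbb{K}}(h')$ and $N^{A_{\infty}}_{\mathbb{K}}(h'\circ h)$ with $\vee$ (Construction \ref{Con:NerveOnMaps}, Lemma \ref{lemma:TANpreservesbycomp}), this reduces the claim to the single identity
$$
(h'\circ h)_{\Delta^{n}}(y)_{\id_{\Delta^{n}}} = h'_{\Delta^{n}}\bigl(h_{\Delta^{n}}(y)\bigr)_{\id_{\Delta^{n}}}\qquad \text{for all }n\geq 0.
$$

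The last step is a direct, if sign-heavy, computation, entirely parallel to the left-hand-side computation in the proof of Proposition \ref{proposition:TANpreservedbyfunctor}. Expanding the right-hand side by applying Notation \ref{notation:tempmapoffunctor} twice produces a double sum over inert maps $\nu': S'\hookrightarrow \Delta^{n}$ and $\mu = \mu_{1}\vee\dots\vee\mu_{r}: S\hookrightarrow S'$ (with $r = \ell(S')$ and $S = S_{1}\vee\dots\vee S_{r}$) of terms $(-1)^{\epsilon(S') + \epsilon(S_{1}) + \dots + \epsilon(S_{r}) + \varphi(\mu)}\, h'_{r}\bigl((h_{\ell(S_{1})}\otimes\dots\otimes h_{\ell(S_{r})})(y_{\nu'\circ\mu})\bigr)$. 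Reindexing this sum by the bijection of Lemma \ref{lemma:inertmapscombinatorics}(a) (replacing the pair by $\eta = \nu'\circ\mu$ together with the composition $i_{j} = \ell(S_{j})$ of $\ell(S)$) and simplifying the sign by Lemma \ref{Lem:MagicFormula} -- which rewrites $\epsilon(S') + \epsilon(S_{1}) + \dots + \epsilon(S_{r}) + \varphi(\mu)$ as $\epsilon(S) + \epsilon_{g}(i_{1},\dots,i_{r})$ modulo $2$ -- the double sum collapses, by the composition formula for $A_{\infty}$-functors, to $\sum_{\eta: S\hookrightarrow \Delta^{n}}(-1)^{\epsilon(S)}(h'\circ h)_{\ell(S)}(y_{\eta})$, which is precisely $(h'\circ h)_{\Delta^{n}}(y)_{\id_{\Delta^{n}}}$. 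The only delicate point is this sign identity, but it is a simpler instance of the manipulations already carried out in the proof of Proposition \ref{proposition:TANpreservedbyfunctor}, so I do not anticipate a genuine obstacle.
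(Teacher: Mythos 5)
Your proposal is correct and follows essentially the same route as the paper's own proof: the identity case is handled by observing that only the $\nu=\id_U$ term survives in Notation \ref{notation:tempmapoffunctor}, and composition is reduced to $T=\Delta^n$, $g=\id_{\Delta^n}$ and then verified by the reindexing of Lemma \ref{lemma:inertmapscombinatorics}(a) combined with the sign identity of Lemma \ref{Lem:MagicFormula} and the composition formula for $A_\infty$-functors. Your sign bookkeeping ($\epsilon(S')+\sum_i\epsilon(S_i)+\varphi(\mu)\equiv\epsilon(S)+\epsilon_g(i_1,\dots,i_r)\bmod 2$) matches the paper's computation exactly.
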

\begin{proof}
Let $\mathcal{A}$ be a strictly unital $A_{\infty}$-category and consider the identity $\id_{\mathcal{A}}$ on $\mathcal{A}$. Then $(\id_{\mathcal{A}})_{n}: \mathcal{A}^{\otimes n}_{\bullet}\rightarrow \mathcal{A}_{\bullet}$ is zero for all $n > 1$ and the identity on the complex $\mathcal{A}_{\bullet}$ otherwise. It follows from Notation \ref{notation:tempmapoffunctor} that $(\id_{\mathcal{A}})_{T}: N^{A_{\infty}}_{\mathbb{K}}(\mathcal{A})_{T}\rightarrow N^{A_{\infty}}_{\mathbb{K}}(\mathcal{A})_{T}$ is the identity as well for each $T\in \nec$.

Let $f: \mathcal{A}\rightarrow \mathcal{B}$ and $g: \mathcal{B}\rightarrow \mathcal{C}$ be $A_{\infty}$-functors. We wish to show that for all injective necklace maps $h: U\hookrightarrow T$ and $a,b\in \Ob(\mathcal{A})$, $y\in N^{A_{\infty}}_{\mathbb{K}}(\mathcal{A})_{T}(a,b)$:
$$
g_{T}(f_{T}(y))_{h} = (g\circ f)_{T}(y)_{h}
$$
As before, it suffices to check the case where $T = \Delta^{n}$ and $h = \id_{\Delta^{n}}$. Then:
\begin{align*}
  g_{\Delta^{n}}(f_{\Delta^{n}}(y))_{\id_{\Delta^{n}}} &= \sum_{\substack{\nu: S\hookrightarrow \Delta^{n}\\ \text{inert}}}(-1)^{\epsilon(S)} g_{\ell(S)}(f_{T}(y)_{\nu}) \\&= \sum_{\substack{\nu: S\hookrightarrow \Delta^{n}, \ell(S)=r\\ \mu: T\hookrightarrow S\\ \text{inert}}}(-1)^{\epsilon(S)+\epsilon(T_1)+\cdots + \epsilon(T_r)+\varphi(\mu)} g_{r}(f_{\ell(T_{1})}\otimes \dots \otimes f_{\ell(T_{r})})(y_{\nu\circ \mu})  
\end{align*}

\noindent where we’ve written $\mu = \mu_1\vee \dots \vee\mu_r$ for unique inert maps $\mu_{i}: T_{i}\hookrightarrow \Delta^{n_{i}}$. By Lemma \ref{lemma:inertmapscombinatorics}(a), Lemma \ref{Lem:MagicFormula} and the definition of the composition of $A_{\infty}$-functors, this equals
$$
\sum_{\substack{\eta: T\hookrightarrow \Delta^{n}\\ \text{inert}\\ r > 0, i_{1},\dots,i_{r} > 0\\ i_{1} + \dots + i_{r} = \ell(T)}}\!\!\!\!\!(-1)^{\epsilon(T)+\epsilon_g(i_1,...,i_r)} g_{r}(f_{i_{1}}\otimes \dots \otimes f_{i_{r}})(y_{\eta}) = \sum_{\substack{\eta: T\hookrightarrow \Delta^{n}\\ \text{inert}}}(-1)^{\epsilon(T)} (g\circ f)_{T}(y_{\eta}) = (g\circ f)_{\Delta^{n}}(y)_{\id_{\Delta^{n}}}
$$
\end{proof}

\begin{Def}
We call the functor $N^{A_{\infty}}_{\mathbb{K}}: \suAinftyCat\rightarrow \ts\Mod(\mathbb{K})$ the \emph{templicial $A_{\infty}$-nerve functor}.
\end{Def}

\section{Main Results}
\label{Sec:MainResults}
\begin{comment}
 \textcolor{blue}{We should be careful about notation. $T$ is used as a necklace and a set of vertices, $f^{*}$ is used as a base change of quivers and as for structure on $N^{A_{\infty}}_{\mathbb{K}}(\mathcal{A})$ as functor on $\nec^{op}$}   
\end{comment}

\par In this section we prove some important properties of the construction $N^{A_\infty}_\K$.
\begin{Thm}
\label{Thm:MapIntoN}
Let $(X,O)$ be a templicial vector space and $\mathcal{A}$ an $A_\infty$-category. Then a map $(\alpha,f): X\to N^{A_\infty}_\mathbb{K}(A)$ is equivalent to the following data
\begin{itemize}
    \item a map of sets $f:O\to \Ob(\mathcal{A})$
    \item for all $n>0$, a map of $\mathbb{K}$-quivers $\beta_n: X_n \to f^* \mathcal{A}_{n-1}$
\end{itemize}
\noindent subject to the following conditions
\begin{itemize}
    \item for all $n\geq 0$ and $0\leq j \leq n$:
    \begin{equation}\label{equation:MapIntoNdeg}
        \beta_{n+1}\circ s_j^X=\begin{cases}
            u_\aaa & n=0\\
            0 & n>0\\
        \end{cases}
    \end{equation}
    \item for all $n>0$:
    \begin{equation}
    \label{eq:UP}
        \partial\beta_n = \sum_{j=1}^{n-1}(-1)^{j-1} \beta_{n-1} d_j^X - \sum_{\substack{i_1+\cdots+i_l=n\\i_{j} > 0, l\geq 2}} (-1)^{1+\epsilon_g(i_1,...,i_l)} m_l \left(\beta_{i_1}\otimes_T\cdots
    \otimes_T \beta_{i_l}\right)\mu^X_{i_1,...,i_l}
    \end{equation}
\end{itemize}
\noindent where $m_{n}$ denotes the multiplications of $f^{*}\mathcal{A}$, see Example \ref{Ex:AooCatPullback}.
\end{Thm}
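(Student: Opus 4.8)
The plan is to recognise the statement as a hands-on universal property of $N^{A_\infty}_{\mathbb{K}}(\mathcal{A})$. First I would use the fully faithful embedding $(-)^{nec}\colon \ts\Mod(\mathbb{K})\hookrightarrow \mathbb{K}\Cat_{\nec}$ of Proposition \ref{Prop:tempnec} to identify a templicial map $(\alpha,f)\colon X\to N^{A_\infty}_{\mathbb{K}}(\mathcal{A})$ with a necklicial functor $X^{nec}\to N^{A_\infty}_{\mathbb{K}}(\mathcal{A})^{nec}$, that is, an object map $f\colon O\to \Ob(\mathcal{A})$ together with morphisms of necklicial vector spaces $X_{\bullet}(a,b)\to N^{A_\infty}_{\mathbb{K}}(\mathcal{A})_{\bullet}(f(a),f(b))$ for all $a,b\in O$, compatible with compositions and units. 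Since $X$ is templicial, its composition maps $X_{T_{1}}\otimes_{O}X_{T_{2}}\to X_{T_{1}\vee T_{2}}$ are isomorphisms, so such a functor is determined by its components $\alpha_{n}\colon X_{n}\to N^{A_\infty}_{\mathbb{K}}(\mathcal{A})_{n}$ on the beads $\Delta^{n}$; moreover, for an injective necklace map $g\colon U\hookrightarrow\Delta^{n}$ with $U=\Delta^{n_{1}}\vee\dots\vee\Delta^{n_{l}}$, naturality in $g$ together with compatibility with comultiplications and the canonical isomorphism $N^{A_\infty}_{\mathbb{K}}(\mathcal{A})_{U}\cong N^{A_\infty}_{\mathbb{K}}(\mathcal{A})_{n_{1}}\otimes_{O}\dots\otimes_{O}N^{A_\infty}_{\mathbb{K}}(\mathcal{A})_{n_{l}}$ of Lemma \ref{lemma:TANpreservesbycomp} force
\[
\alpha_{n}(x)_{g}=(\beta_{n_{1}}\otimes\dots\otimes\beta_{n_{l}})\bigl(g^{*}(x)\bigr),\qquad \beta_{m}\coloneqq \mathrm{pr}_{\id_{\Delta^{m}}}\circ\,\alpha_{m}\colon X_{m}\longrightarrow f^{*}\mathcal{A}_{m-1}=(s\mathcal{A})_{m},
\]
where $g^{*}\colon X_{n}\to X_{n_{1}}\otimes_{O}\dots\otimes_{O}X_{n_{l}}$ is the structure map of $X$ and $\mathrm{pr}_{\id_{\Delta^{m}}}$ is the projection onto the $\id_{\Delta^{m}}$-summand of $N^{A_\infty}_{\mathbb{K}}(\mathcal{A})_{m}$. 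Thus a templicial map $(\alpha,f)$ is the same datum as $f$ together with a family $(\beta_{n})_{n>0}$ ($\alpha_{0}$ being forced since $X_{0}=\underline{\mathbb{K}}_{O}=N^{A_\infty}_{\mathbb{K}}(\mathcal{A})_{0}$), and the content of the theorem is that the remaining conditions on $\alpha$ correspond exactly to \eqref{equation:MapIntoNdeg} and \eqref{eq:UP}.

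For the forward implication: compatibility of $\alpha$ with inner faces and comultiplications is automatic from the displayed formula and the functoriality of $X_{\bullet}$ on $\nec^{op}$. Compatibility with degeneracies, evaluated on the $\id_{\Delta^{n+1}}$-summand using the explicit description of the degeneracy maps of $N^{A_\infty}_{\mathbb{K}}(\mathcal{A})$ from the Examples in Subsection \ref{Subsec:AooNerve}, gives precisely \eqref{equation:MapIntoNdeg}: for $n=0$ one lands in the case $\{i,i+1\}\subseteq g(U)$ and obtains $\beta_{1}s_{0}^{X}=u_{\mathcal{A}}$, while for $n>0$ one lands in the ``otherwise'' case and obtains $\beta_{n+1}s_{j}^{X}=0$. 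Finally, the requirement that $\alpha_{n}(x)$ lie in the subspace $N^{A_\infty}_{\mathbb{K}}(\mathcal{A})_{n}$, i.e.\ satisfy all (TAN) relations, reduces --- via the bead decomposition \eqref{equation:TANdecomp} and the displayed formula --- to the single (TAN) relation at $g=\id_{\Delta^{n}}$, for which $\ell(U)=1$, $\varphi_{1}(\nu)=0$ and $\epsilon(\Delta^{n})=0$. In that relation the $\nu=\id_{\Delta^{n}}$ summand on the left is $\partial\beta_{n}$, the summands with $\ell(S)=l\geq 2$ (in bijection with the compositions $i_{1}+\dots+i_{l}=n$) are the terms $m_{l}(\beta_{i_{1}}\otimes\dots\otimes\beta_{i_{l}})\mu^{X}_{i_{1},\dots,i_{l}}$, and the right-hand side is $\sum_{j}(-1)^{j-1}\beta_{n-1}d_{j}^{X}$; after matching signs this is exactly \eqref{eq:UP}.

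For the converse, given $f$ and $(\beta_{n})_{n>0}$ satisfying \eqref{equation:MapIntoNdeg} and \eqref{eq:UP}, I would define $\alpha_{n}$ by the displayed formula and extend to arbitrary necklaces $T=\Delta^{n_{1}}\vee\dots\vee\Delta^{n_{l}}$ by $\alpha_{T}=\alpha_{\Delta^{n_{1}}}\otimes\dots\otimes\alpha_{\Delta^{n_{l}}}$ under the isomorphisms of Lemma \ref{lemma:TANpreservesbycomp}. Running the previous paragraph backwards: $\alpha_{n}(x)$ satisfies the (TAN) relations because, bead by bead, these reduce to the (TAN) relation at $\id_{\Delta^{n_{k}}}$ applied to a tensor factor of $g^{*}(x)$, which is \eqref{eq:UP} for that element; naturality on $\nec^{op}$, compatibility with inner faces and comultiplications, preservation of units, and well-definedness of $\alpha_{T}$ on wedges all follow formally from the functoriality and colax monoidal structure of $X_{\bullet}$; and compatibility with degeneracies follows from \eqref{equation:MapIntoNdeg} by the same case analysis as before. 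Hence $(f,(\alpha_{T})_{T})$ is a necklicial functor $X^{nec}\to N^{A_\infty}_{\mathbb{K}}(\mathcal{A})^{nec}$, so by full faithfulness of $(-)^{nec}$ a templicial map, and the two constructions are mutually inverse.

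The main obstacle is bookkeeping rather than structure: one must reconcile the signs in the (TAN) relation at $\id_{\Delta^{n}}$ with those of \eqref{eq:UP}, carefully tracking the signs introduced by the suspension $s\mathcal{A}$ and the symmetry $\tau$ (this accounts for the $1+\epsilon_{g}(i_{1},\dots,i_{l})$ in \eqref{eq:UP}), and one must verify that the ``automatic'' compatibilities genuinely follow from functoriality of $X_{\bullet}$. In particular, the degeneracy case analysis, though routine, really does invoke \eqref{equation:MapIntoNdeg} twice: once when a length-one edge of $U$ is collapsed (producing a unit $u_{\mathcal{A}}$) and once when a longer bead of $U$ has one of its interior edges collapsed (producing zero).
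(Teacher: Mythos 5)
Your proposal follows essentially the same route as the paper's proof: pass through the fully faithful embedding $(-)^{nec}$, observe that compatibility with inner faces and comultiplications makes the whole collection $(\alpha_n^g)_g$ determined by the components $\beta_n=\mathrm{pr}_{\id_{\Delta^n}}\circ\alpha_n$, match the degeneracy compatibility with \eqref{equation:MapIntoNdeg} via the case analysis on $\overline{\sigma_i}$, and identify \eqref{eq:UP} with the (TAN) relation at $\id_{\Delta^n}$ while checking bead by bead that the remaining (TAN) relations are automatic. The only caveat is the sign reconciliation between $(-1)^{\epsilon(S)}$ in \eqref{equation:TAN} and $(-1)^{1+\epsilon_g(i_1,\dots,i_l)}$ in \eqref{eq:UP} (which involves the suspension conventions), but you explicitly flag this as bookkeeping, and the paper treats it at the same level of detail.
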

\begin{proof}
Without loss of generality, we may replace $\mathcal{A}$ by $f^{*}\mathcal{A}$ and assume that $f$ is the identity on $T$. For each injective necklace map $g: U\hookrightarrow \Delta^{n}$ with $n > 0$, let us denote
$$
\alpha_n^{g}: X_n \xrightarrow{\alpha_n} N^{A_\infty}_k(\aaa)_{n}\xrightarrow{\pi_g} (s\aaa)_U
$$
Consider the fully faithful embedding $(-)^{nec}: \ts\Vect(\mathbb{K})\hookrightarrow \mathbb{K}\Cat_{\nec}$ (\ref{eq:nec}). Then the compatibility of $\alpha$ with the inner face maps and the comultiplications is equivalent to having, for all injective necklace maps $h: T\hookrightarrow \Delta^{n}$:
\begin{equation}\label{equation:MapIntoN1}
   h^{*}\alpha_{n} = \alpha^{nec}_{T}X^{nec}(h)\quad \Longleftrightarrow\quad \forall g: U\hookrightarrow T: \alpha^{hg}_{n} = (\alpha^{g_{1}}_{n_{1}}\otimes \dots \otimes \alpha^{g_{k}}_{n_{k}})X^{nec}(h) 
\end{equation}
where $T = \Delta^{n_{1}}\vee \dots \vee \Delta^{n_{k}}$ and the injective maps $g_{i}: U_{i}\hookrightarrow \Delta^{n_{i}}$ are unique such that $g = g_{1}\vee \dots \vee g_{n}$. In particular for $g = \id_{T}$, we find that
\begin{equation}\label{equation:MapIntoN2}
    \alpha^{h}_{n} = (\alpha^{\id_{\Delta^{n_{1}}}}_{n_{1}}\otimes \dots \otimes \alpha^{\id_{\Delta^{n_{k}}}}_{n_{k}})X^{nec}(h)
\end{equation}
Note that since $X^{nec}(hg) = (X^{nec}(g_{1})\otimes \dots \otimes X^{nec}(g_{k}))X^{nec}(h)$, it follows that the assignment $(\alpha^{h}_{n})_{n,h}\mapsto \alpha^{\id_{\Delta^{n}}}_{n}$ defines a bijection between the set of collections of quiver maps $(\alpha^{g}_{n}: X_{n}\rightarrow (s\mathcal{A})_{U})_{\substack{n > 0\\ g: U\hookrightarrow \Delta^{n}}}$ satisfying \eqref{equation:MapIntoN1}, and the set of collections of quiver maps
$$
(\beta_{n}: X_{n}\rightarrow \mathcal{A}_{n-1})_{n > 0}
$$
We will finish the proof by showing that
\begin{enumerate}[(i)]
    \item the compatibility of $\alpha$ with the degeneracy maps is equivalent to equation \eqref{equation:MapIntoNdeg} for $\beta_{n} = \alpha^{id_{\Delta^{n}}}_{n}$.
    \item the fact that $\alpha_{n}(x)$ satisfies the the (TAN) relations for all $n > 0$ and $x\in X_{n}$ is equivalent to equation \eqref{eq:UP} for $\beta_{n} = \alpha^{\id_{\Delta^{n}}}_{n}$.
\end{enumerate}

To prove $(i)$, note that $\sigma^{*}_{i}\alpha_{n} = \alpha_{n+1}s^{X}_{i}$ for all $1\leq i\leq n$ if and only if for all injective $g: U\hookrightarrow \Delta^{n+1}$ we have $\overline{e}\alpha^{g'}_{n} = \alpha^{g}_{n+1}s^{X}_{i}$, where we've factored $\sigma_{i} g = g'e$ with $g'$ injective and $e$ an epimorphism. Using \eqref{equation:MapIntoN2} and the fact that $X(e)X(g') = X(g)s^{X}_{i}$, we see that this is further equivalent to having $\overline{\sigma_{i}}\alpha^{\id_{\Delta^{n+1}}}_{n+1} = \alpha^{\id_{\Delta^{n}}}_{n}s^{X}_{i}$. Finally, $\overline{\sigma_{i}} = 0$ unless $n = i = 0$ and thus this recovers precisely equation \eqref{equation:MapIntoNdeg}.

To prove $(ii)$, note that \eqref{eq:UP} expresses precisely the (TAN) relation for $\alpha_{n}$ at $\id_{\Delta^{n}}$ for all $n > 0$. All the other (TAN) relations follow from these ones, so they are superfluous. Let $g: U= \Delta^{n_1}\vee\cdots\vee \Delta^{n_l}\hookrightarrow \Delta^n$ be injective. Then for all $1\leq i\leq n$:
\begin{comment}
    \begin{align*}
    &\sum_{\substack{\nu: S\hookrightarrow \Delta^{n_{i}}\\ \text{inert}}}(-1)^{\epsilon(S)+\dim(U^{<i})\ell(S)} (\id^{\otimes (i-1)}\otimes m_{\ell(S)}\otimes \id^{\otimes (l-i)})(\alpha_n^{g\circ_i \nu })\\
    =& \sum_{\substack{\nu: S\hookrightarrow \Delta^{n_{i}}\\ \text{inert}}}(-1)^{\epsilon(S)} (\id^{\otimes (i-1)}\otimes m_{\ell(S)}\otimes \id^{\otimes (l-i)})(\beta_{n_1}\otimes \cdots \otimes \beta_S \otimes \cdots \otimes \beta_{n_l})X^{nec}(g\circ_i \nu)\\
    =& \sum_{\substack{\nu: S\hookrightarrow \Delta^{n_{i}}\\ \text{inert}}}(-1)^{\epsilon(S)} (\id^{\otimes (i-1)}\otimes m_{\ell(S)}\otimes \id^{\otimes (l-i)})(\beta_{{n_1}}\otimes \cdots \otimes \beta_S X^{nec}(\nu) \otimes \cdots \otimes \beta_{{n_l}})X^{nec}(g)\\
    =& \sum_{k=1}^{n_i-1}(-1)^{k-1} 
    (\beta_{n_1}\otimes \cdots \otimes \beta_{n-1}X^{nec}(\delta_k) \otimes \cdots \otimes \beta_{n_l})X^{nec}(g)\\
    =& \sum_{k=1}^{n_i-1}(-1)^{k-1} 
    (\beta_{n_1}\otimes \cdots \otimes \beta_{n-1} \otimes \cdots \otimes \beta_{n_l})X^{nec}(g \circ_i \delta_k)\\
    =& \sum_{k=1}^{n_i-1}(-1)^{k-1} 
    \alpha_{n-1}^{g\circ \delta_k}
\end{align*}
\end{comment}
\begin{align*}
    &\sum_{\substack{\nu: S\hookrightarrow \Delta^{n_{i}}\\ \text{inert}}}(-1)^{\epsilon(S)+\dim(U^{<i})\ell(S)} (\id\otimes m_{\ell(S)}\otimes \id)(\alpha_n^{g\circ_i \nu })\\
    =& \sum_{\substack{\nu: S\hookrightarrow \Delta^{n_{i}}\\ \text{inert}}}(-1)^{\epsilon(S)+\dim(U^{<i})\ell(S)} (\id \otimes m_{\ell(S)}\otimes \id)(\beta_{n_1}\otimes \cdots \otimes \beta_S \otimes \cdots \otimes \beta_{n_l})X^{nec}(g\circ_i \nu)\\
    =& \sum_{\substack{\nu: S\hookrightarrow \Delta^{n_{i}}\\ \text{inert}}}(-1)^{\epsilon(S)} (\beta_{{n_1}}\otimes \cdots \otimes m_{\ell(S)} \beta_S X^{nec}(\nu) \otimes \cdots \otimes \beta_{{n_l}})X^{nec}(g)\\
\end{align*}
\begin{align*}
    =& \sum_{k=1}^{n_i-1}(-1)^{k-1} 
    (\beta_{n_1}\otimes \cdots \otimes \beta_{n-1}X^{nec}(\delta_k) \otimes \cdots \otimes \beta_{n_l})X^{nec}(g)\\
    =& \sum_{k=1}^{n_i-1}(-1)^{k-1} 
    (\beta_{n_1}\otimes \cdots \otimes \beta_{n-1} \otimes \cdots \otimes \beta_{n_l})X^{nec}(g \circ_i \delta_k)\\
    =& \sum_{k=1}^{n_i-1}(-1)^{k-1} 
    \alpha_{n-1}^{g\circ \delta_k}
\end{align*}
where $\beta_{p} = \alpha^{\id_{\Delta^{p}}}_{p}$ and $\beta_{S} = \beta_{m_{1}}\otimes \dots \otimes\beta_{m_{k}}$ for $S = \Delta^{m_{1}}\vee \dots \vee \Delta^{m_{k}}$.
\end{proof}

\begin{Cor}
\label{Cor:UnderlyingSSet}
    For any $A_\infty$-category $\mathcal{A}$,
    \[
    \tilde{U}(N^{A_\infty}_\mathbb{K}(\mathcal{A}))\cong N^{A_\infty}(\mathcal{A})
    \]
\end{Cor}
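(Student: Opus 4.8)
The plan is to compute the $n$-simplices of $\tilde{U}(N^{A_\infty}_\K(\mathcal{A}))$ directly from Theorem \ref{Thm:MapIntoN}, applied to the free templicial vector space $\tilde{F}_\K(\Delta^n)$. Indeed, by the adjunction \eqref{eqtildeF} and the Yoneda lemma we have natural identifications
\[
\tilde{U}(N^{A_\infty}_\K(\mathcal{A}))_n\;=\;\Hom_{\SSet}\!\big(\Delta^n,\tilde{U}(N^{A_\infty}_\K(\mathcal{A}))\big)\;\cong\;\Hom_{\ts\Vect(\K)}\!\big(\tilde{F}_\K(\Delta^n),N^{A_\infty}_\K(\mathcal{A})\big),
\]
so it suffices to produce a bijection, natural in $[n]$ (and in $\mathcal{A}$), between the right-hand side and $\Hom_{\suAinftyCat}(A_\infty[\Delta^n],\mathcal{A})=N^{A_\infty}(\mathcal{A})_n$.

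By Theorem \ref{Thm:MapIntoN} with $X=\tilde{F}_\K(\Delta^n)$, such a templicial map amounts to a map of sets $f\colon\{0,\dots,n\}\to\Ob(\mathcal{A})$ together with, for each $m>0$, a $\K$-quiver map $\beta_m\colon\tilde{F}_\K(\Delta^n)_m\to f^{*}\mathcal{A}_{m-1}$, subject to \eqref{equation:MapIntoNdeg} and \eqref{eq:UP}. Now $\tilde{F}_\K(\Delta^n)_m$ is the free $\K$-quiver on the $m$-simplices of $\Delta^n$, i.e. on the order-preserving maps $\phi\colon[m]\to[n]$. Since the hom-objects of $A_\infty[\Delta^n]$ are likewise free, concentrated in degree $0$, on the symbols $(i,j)$ with $i\le j$, specifying the family $(\beta_m)_m$ is the same — up to a fixed sign depending on $m$ — as specifying the higher components $F_m$ of a candidate $A_\infty$-functor $F=(f,F_1,F_2,\dots)\colon A_\infty[\Delta^n]\to\mathcal{A}$, with $\beta_m$ corresponding on generators to $\phi\mapsto F_m\big((\phi(0),\phi(1))\otimes\cdots\otimes(\phi(m-1),\phi(m))\big)$.

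It then remains to see that the two sets of constraints correspond. A degenerate $m$-simplex of $\Delta^n$ maps under the above dictionary to a tensor of symbols containing some unit $1_i=(i,i)$, so \eqref{equation:MapIntoNdeg} becomes precisely the unitality axiom \eqref{eq:AooFunctorUnit} for $F$. Evaluating \eqref{eq:UP} on a simplex $\phi$, using that the inner faces of $\tilde{F}_\K(\Delta^n)$ restrict $\phi$ along the $\delta_j$ while its comultiplications cut $\phi$ into its consecutive sub-simplices, and using that $m^{A_\infty[\Delta^n]}_s=0$ for $s\neq 2$ (so only the $s=2$ terms survive on the left-hand side of \eqref{Eq:AooMorRel}), one checks that \eqref{eq:UP} is equivalent to the $A_\infty$-functor relation \eqref{Eq:AooMorRel} for $F$; equivalently, reading it only on the non-degenerate simplices $\iota_{i_0,\dots,i_m}$ and setting $a_{i_0,\dots,i_m}:=\beta_m(\iota_{i_0,\dots,i_m})$, it recovers Faonte's nerve relation \eqref{Eq:AooNerve}. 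I expect the sign bookkeeping in this last step to be the main obstacle: one has to reconcile the signs $\epsilon_g$ appearing in \eqref{eq:UP} with those in \eqref{Eq:AooMorRel} and \eqref{Eq:AooNerve}, for which one uses the identity $\epsilon_g(n_1,\dots,n_l)=\epsilon_c(t_1,\dots,t_{l-1})$ from the definition of the signature of a necklace together with the suspension signs implicit in passing between $\mathcal{A}$ and $s\mathcal{A}$. Finally, the resulting bijection is natural in $[n]$ and in $\mathcal{A}$: at every step it merely reindexes free generators along morphisms of $\simp$ (respectively along $A_\infty$-functors), compatibly with the cosimplicial objects $\tilde{F}_\K(\Delta^{\bullet})$ and $A_\infty[\Delta^{\bullet}]$; alternatively one invokes the naturality of the bijection of Theorem \ref{Thm:MapIntoN} in $X$.
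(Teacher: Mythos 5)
Your proposal is correct and follows essentially the same route as the paper: reduce via the adjunction $\tilde{F}_\K\dashv\tilde{U}$ to templicial maps $\tilde{F}_\K(\Delta^n)\to N^{A_\infty}_\K(\mathcal{A})$, apply Theorem \ref{Thm:MapIntoN}, use the freeness of $\tilde{F}_\K(\Delta^n)_m$ on simplices of $\Delta^n$ to extract the data $(a_f)$, and match \eqref{equation:MapIntoNdeg} and \eqref{eq:UP} with the (degenerate-vanishing and) nerve relation \eqref{Eq:AooNerve}, equivalently the $A_\infty$-functor axioms for $A_\infty[\Delta^n]\to\mathcal{A}$. The paper phrases the target via the explicit simplex description of $N^{A_\infty}(\mathcal{A})_n$ rather than via $\Hom_{\suAinftyCat}(A_\infty[\Delta^n],\mathcal{A})$, but this is an immaterial difference; the sign bookkeeping you defer is exactly the $\epsilon_g=\epsilon_c$ reconciliation the paper also treats only implicitly.
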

\begin{proof}
Let $n\geq 0$ be an integer. By the adjunction $\tilde{F}\dashv \tilde{U}$, an $n$-simplex of $\tilde{U}(N^{A_{\infty}}_{\mathbb{K}}(\mathcal{A}))$ is equivalent to a templicial map $\tilde{F}(\Delta^{n})\rightarrow N^{A_{\infty}}_{\mathbb{K}}(\mathcal{A})$. By the previous proposition, this consists of
\begin{itemize}
    \item a choice of objects $A_{0},\dots, A_{n}\in \Ob(\mathcal{A})$
    \item for all $m > 0$, a quiver map $\beta_{m}: \tilde{F}(\Delta^{n})_{m}\rightarrow f^{*}\mathcal{A}_{n-1}$
\end{itemize}
satisfying equations \eqref{equation:MapIntoNdeg} and \eqref{eq:UP}. Note that for all $i\leq j$ in $[n]$, we have $(\Delta^{n})_{m}(i,j) = \Delta_{f}([m],[j-i])$. Any order morphism $f: [m]\rightarrow [n]$ in $\simp$ can be uniquely factored as $[m]\rightarrow [f(m)-f(0)]\hookrightarrow [n]$ where the first map belongs to $\fint$ and the second map is given by $j\mapsto j + f(0)$. It follows that the quiver maps $(\beta_{m})_{m > 0}$ correspond to a collection of elements
$$
a_{f}\in \mathcal{A}_{m-1}(A_{f(0)},A_{f(m)})
$$
for all $m > 0$ and $f: [m]\rightarrow [n]$ in $\simp$. Then equation \eqref{equation:MapIntoNdeg} is equivalent to having, for all $0\leq i\leq m$ and $f: [m]\rightarrow [n]$ in $\simp$:
$$
a_{f\circ \sigma_{i}} =
\begin{cases}
    \id_{A_{i}} & \text{if }m = 0\\
    0 & \text{if }m > 0
\end{cases}
$$
while equation \eqref{eq:UP} is translates to having, for all $m > 0$, $f: [m]\rightarrow [n]$ in $\simp$:
\begin{equation}\label{equation:n-simplexofAinftyNerve}
    \partial(a_{f}) = \sum_{j=1}^{m-1}(-1)^{j-1}a_{f\circ \delta_{j}} - \sum_{\substack{i_{1} + \dots + i_{l} = m\\ i_{j} > 0, l\geq 2}}(-1)^{1+\epsilon(i_1,...,i_l)} m_{l}(a_{f\vert_{[i_{1}]}}\otimes \dots \otimes a_{f\vert_{[i_{l}]}})
\end{equation}
where $f\vert_{[i_{j}]}$ denotes the restiction of $f$ to $[i_{j}]\hookrightarrow [m]: t\mapsto i_{1} + \dots + i_{j-1} + t$. Hence, we conclude that the collection $(a_{f})_{f: [m]\rightarrow [n]}$ is completely determined by those $a_{f}$ for which $f$ is injective. Moreover, notice that equation \eqref{equation:n-simplexofAinftyNerve} is identically zero on both sides for non-injective $f$.

Finally, writing $a_{f} = a_{f(0),\dots,f(m)}$, we conclude that an $n$-simplex of $\tilde{U}(N^{A_{\infty}}_{\mathbb{K}}(\mathcal{A}))$ is equivalent to the data of a choice of objects $A_{0},\dots A_{n}$ of $\mathcal{A}$ along with a collection $(a_{i_{0},\dots,i_{m}})_{0\leq i_{0} < \dots < i_{m} < n}$ of elements $a_{i_{0},\dots,i_{m}}\in \mathcal{A}_{m-1}(A_{i_{0},A_{m}})$ satisfying
\begin{align*}
\partial (a_{i_0,...,i_m})=\sum_{k=1}^{m-1}&(-1)^{k-1} a_{i_0,...,\not i_k,...,i_m}+\\&+\sum_{\substack{2\leq l \leq m\\0 < k_1< \cdots < k_{l-1} < m}} (-1)^{\epsilon_c(k_1,...,k_{l-1})} m_l(a_{i_0,...,i_{k_1}}\otimes \cdots \otimes a_{i_{k_{l-1}},..., i_m})    
\end{align*}
This is precisely the data of an $n$-simplex in the $A_\infty$-nerve \ref{Def:SimplAooNerve}. Thus we have an isomorphism $\tilde{U}(N^{A_\infty}_{\mathbb{K}}(\mathcal{A}))_n\cong N^{A_\infty}(\mathcal{A})_n$. It is quick to see that this is natural in $n$ and $\mathcal{A}$, and thus we have obtained the desiderata.
\end{proof}
\begin{Cor}
\label{Cor:dgNerve}
For any dg-category $\mathcal{C}_{\bullet}$,
    \[
    N^{A_\infty}_\mathbb{K}(\mathcal{C}_{\bullet})\cong N^{dg}_\mathbb{K}(\mathcal{C}_{\bullet})
    \]
\end{Cor}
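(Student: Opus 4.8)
The plan is to deduce the corollary from the universal property of the templicial $A_\infty$-nerve established in Theorem~\ref{Thm:MapIntoN}, by comparing it with the analogous characterization of the templicial dg-nerve of \cite{lowen2023frobenius}. Regard $\mathcal{C}_\bullet$ as an $A_\infty$-category via Example~\ref{Ex:dgCats}, so that $m_1 = \partial$, $m_2 = m$ and $m_k = 0$ for every $k > 2$. Fix a templicial vector space $(X,O)$. By Theorem~\ref{Thm:MapIntoN}, a templicial map $X \to N^{A_\infty}_{\mathbb{K}}(\mathcal{C}_\bullet)$ is the data of a map of sets $f : O \to \Ob(\mathcal{C})$ and quiver maps $\beta_n : X_n \to f^{*}\mathcal{C}_{n-1}$ for $n > 0$, subject to \eqref{equation:MapIntoNdeg} and \eqref{eq:UP}. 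In the right-hand sum of \eqref{eq:UP} every decomposition $i_1 + \dots + i_l = n$ with $l \geq 3$ contributes a factor $m_l = 0$, so only $l = 2$ remains; writing $i_1 = p$, $i_2 = q$ with $p + q = n$, $p,q > 0$, we have $\epsilon_g(p,q) = p - 1$, hence $(-1)^{1+\epsilon_g(p,q)} = (-1)^{p}$. Therefore \eqref{eq:UP} reduces to
\[
\partial \beta_n \;=\; \sum_{j=1}^{n-1}(-1)^{j-1}\,\beta_{n-1}\,d_j^{X} \;-\; \sum_{\substack{p+q=n\\ p,q>0}}(-1)^{p}\,m\bigl(\beta_p \otimes_O \beta_q\bigr)\,\mu_{p,q}^{X},
\]
while the degeneracy condition \eqref{equation:MapIntoNdeg} is untouched.

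This is precisely the description of templicial maps into the templicial dg-nerve $N^{dg}_{\mathbb{K}}(\mathcal{C}_\bullet)$ given in \cite{lowen2023frobenius}: the same data $(f, (\beta_n)_{n>0})$ subject to the same normalization on degeneracies and the same defining relation on the $\beta_n$. Consequently the contravariant functors $\Hom_{\ts\Mod(\mathbb{K})}\bigl(-,\, N^{A_\infty}_{\mathbb{K}}(\mathcal{C}_\bullet)\bigr)$ and $\Hom_{\ts\Mod(\mathbb{K})}\bigl(-,\, N^{dg}_{\mathbb{K}}(\mathcal{C}_\bullet)\bigr)$ are naturally isomorphic, and the Yoneda lemma yields a canonical isomorphism $N^{A_\infty}_{\mathbb{K}}(\mathcal{C}_\bullet) \cong N^{dg}_{\mathbb{K}}(\mathcal{C}_\bullet)$. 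Naturality in $\mathcal{C}_\bullet$ follows because both the bijection of Theorem~\ref{Thm:MapIntoN} and the passage from a dg-category to its associated $A_\infty$-category (Example~\ref{Ex:dgCats}) are functorial.

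I expect the only real obstacle to be sign bookkeeping: one must check that the sign $(-1)^{p}$ above agrees with the sign occurring in the defining relation of $N^{dg}_{\mathbb{K}}$ in \cite{lowen2023frobenius}, keeping in mind that the present paper uses homological grading and multiplicative rather than compositional notation for the $A_\infty$-operations. Should a discrepancy by a sign depending only on $n$ appear, it is absorbed by rescaling $\beta_n \mapsto \pm \beta_n$, which still defines an isomorphism of templicial vector spaces. If one prefers not to cite the universal property of $N^{dg}_{\mathbb{K}}$ in exactly this form, the comparison can instead be made directly on necklace categories: for a dg-category only $m_1$ and $m_2$ enter the (TAN) relation \eqref{equation:TAN}, so the only inert maps $\nu : S \hookrightarrow \Delta^{n_k}$ that contribute are those with $\ell(S) \leq 2$, and one then identifies $N^{A_\infty}_{\mathbb{K}}(\mathcal{C}_\bullet)_T$ with $N^{dg}_{\mathbb{K}}(\mathcal{C}_\bullet)_T$ bead by bead for each necklace $T$.
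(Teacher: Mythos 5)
Your proposal is correct and follows essentially the same route as the paper: the authors likewise deduce the corollary from Theorem~\ref{Thm:MapIntoN} together with the analogous characterization of maps into $N^{dg}_{\mathbb{K}}$ (Proposition 3.21 of \cite{lowen2023frobenius}), observing that $m_l = 0$ for $l > 2$ kills all terms with $l \geq 3$ in \eqref{eq:UP}. Your sign computation $(-1)^{1+\epsilon_g(p,q)} = (-1)^{p}$ is right, and the Yoneda step is exactly how the identification is meant to be concluded.
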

\begin{proof}
This follows from Theorem \ref{Thm:MapIntoN} and Proposition 3.21 of \cite{lowen2023frobenius}, by observing that $m_l=0$ for $l>2$ if $\mathcal{C}_{\bullet}$ is a dg-category.
\end{proof}
\begin{Thm}
\label{Thm:QCat}
$N^{A_{\infty}}_{\mathbb{K}}(\mathcal{A})$ is a quasi-category in $\mathbb{K}$-vector spaces.
\end{Thm}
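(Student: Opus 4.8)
The plan is to verify the inner‑horn lifting property directly, reducing it to an explicit filling problem of exactly the shape treated by Faonte in [Cor.~2.10, \cite{faonte2017simplicial}]. Fix $a,b\in\Ob(\mathcal{A})$ and integers $0<j<n$; we must show that $N^{A_\infty}_{\mathbb{K}}(\mathcal{A})_\bullet(a,b)$ has the right lifting property against the inclusion $\tilde{F}(\Lambda^n_j)_\bullet(0,n)\hookrightarrow\tilde{F}(\Delta^n)_\bullet(0,n)$ in $\Fun(\nec^{\op},\Vect(\mathbb{K}))$. Since $\tilde{F}_{\mathbb{K}}$ is built from the free‑vector‑space functor, for any bipointed simplicial set $(K;x,y)$ one has $\tilde{F}(K)_\bullet(x,y)\cong\mathbb{K}[K_\bullet(x,y)]$, the free necklicial vector space on the necklicial set $K_\bullet(x,y)\colon T\mapsto\Hom_{\SSet_{*,*}}(T,K)$; hence, by the adjunction $\mathbb{K}[-]\dashv U$, the lifting problem is equivalent to extending a morphism of necklicial sets $(\Lambda^n_j)_\bullet(0,n)\to U\bigl(N^{A_\infty}_{\mathbb{K}}(\mathcal{A})_\bullet(a,b)\bigr)$ along $(\Lambda^n_j)_\bullet(0,n)\hookrightarrow(\Delta^n)_\bullet(0,n)$. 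Now $\Delta^n$ is itself a necklace (a single bead), so $(\Delta^n)_\bullet(0,n)$ is the representable necklicial set $\nec(-,\Delta^n)$, and by the Yoneda lemma a lift is exactly an element $\hat{y}\in N^{A_\infty}_{\mathbb{K}}(\mathcal{A})_{\Delta^n}(a,b)=N^{A_\infty}_{\mathbb{K}}(\mathcal{A})_n(a,b)$, that is, a collection $(\hat{y}_g)_g$ satisfying the (TAN) relations \eqref{equation:TAN}, whose components along those injective necklace maps $g\colon U\hookrightarrow\Delta^n$ which factor through $\Lambda^n_j$ recover the prescribed horn data.

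The key combinatorial observation is that among the injective necklace maps $g\colon U\hookrightarrow\Delta^n$, exactly two fail to factor through $\Lambda^n_j$: the identity $\id_{\Delta^n}$ and the face $\delta_j\colon\Delta^{n-1}\hookrightarrow\Delta^n$. Indeed, the simplicial subset of $\Delta^n$ spanned by the image of $g$ is not contained in $\Lambda^n_j$ only if some bead of $U$ maps isomorphically onto $[n]$ --- which forces $U=\Delta^n$ and $g=\id_{\Delta^n}$ --- or isomorphically onto $[n]\setminus\{j\}$; as $g$ is injective and $0<j<n$, the subset $[n]\setminus\{j\}$ is not an interval, so a bead mapping onto it can neither be preceded nor followed by another bead, whence $U=\Delta^{n-1}$ and $g=\delta_j$. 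Thus producing a lift amounts to choosing two components $\hat{y}_{\id_{\Delta^n}}\in\mathcal{A}_{n-1}(a,b)$ and $\hat{y}_{\delta_j}\in\mathcal{A}_{n-2}(a,b)$ so that all (TAN) relations hold. A further bookkeeping remark is that these two components appear only in the (TAN) relation at $\id_{\Delta^n}$ and in the one at $\delta_j$, so all the remaining (TAN) relations hold automatically by the consistency of the given horn data.

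Following Faonte, one sets $\hat{y}_{\id_{\Delta^n}}:=0$. The (TAN) relation \eqref{equation:TAN} at $\id_{\Delta^n}$ then reduces to
\[
(-1)^{j-1}\,\hat{y}_{\delta_j}\;=\!\!\sum_{\substack{\nu\colon S\hookrightarrow\Delta^n\text{ inert}\\ \nu\neq\id_{\Delta^n}}}\!\!(-1)^{\epsilon(S)}\,m_{\ell(S)}(\hat{y}_\nu)\;-\!\!\sum_{\substack{1\leq k\leq n-1\\ k\neq j}}\!\!(-1)^{k-1}\,\hat{y}_{\delta_k},
\]
whose right‑hand side is entirely determined by the horn data; this defines $\hat{y}_{\delta_j}$. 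It then remains to verify the (TAN) relation at $\delta_j$, i.e.\ that $\partial(\hat{y}_{\delta_j})$ equals the prescribed alternating sum of its faces and $m_l$‑terms. Since $\hat{y}_{\delta_j}$ depends on $\hat{y}_{\id_{\Delta^n}}$ only through $\partial(\hat{y}_{\id_{\Delta^n}})$, applying $\partial$ removes that dependence and the identity to be checked is a statement purely about the (consistent) horn data. It is established exactly as in loc.\ cit., by expanding $\partial$ through the displayed formula, rewriting $\partial\hat{y}_\nu$ and $\partial\hat{y}_{\delta_k}$ via the (TAN) relations already available on the horn, and concluding using the $A_\infty$‑relations \eqref{Eq:AooRelation} and the strict unitality \eqref{eq:AooUnit}; the signs are reconciled by Lemma \ref{Lem:MagicFormula} and the definitions of $\epsilon$ and $\varphi_k$. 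This sign‑and‑$A_\infty$ bookkeeping is the main obstacle of the proof.

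By the Yoneda lemma the collection $\hat{y}$ so produced corresponds to a morphism $\tilde{F}(\Delta^n)_\bullet(0,n)\to N^{A_\infty}_{\mathbb{K}}(\mathcal{A})_\bullet(a,b)$ in $\Fun(\nec^{\op},\Vect(\mathbb{K}))$ which, by construction, restricts to the given map on $\tilde{F}(\Lambda^n_j)_\bullet(0,n)$; since $a,b$ and $0<j<n$ were arbitrary, this shows that $N^{A_\infty}_{\mathbb{K}}(\mathcal{A})$ is a quasi‑category in vector spaces. Alternatively, the reduction in the first paragraph identifies the necklicial horn data with Faonte's simplicial horn data for $N^{A_\infty}(\mathcal{A})$ --- via Corollary \ref{Cor:UnderlyingSSet} together with the fact that the components of $\hat{y}$ along multi‑bead injective necklace maps are determined by the comultiplication isomorphisms of Lemma \ref{lemma:TANpreservesbycomp} --- so that the statement can also be deduced directly from [Cor.~2.10, \cite{faonte2017simplicial}].
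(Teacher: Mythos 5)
Your proposal follows essentially the same route as the paper's proof: reduce the lifting problem to prescribing the two components of $z$ indexed by the only injective necklace maps into $\Delta^{n}$ that do not factor through $\Lambda^{n}_{j}$ (namely $\id_{\Delta^{n}}$ and $\delta_{j}$), set $z_{\id_{\Delta^{n}}}=0$, define $z_{\delta_{j}}$ by solving the (TAN) relation at $\id_{\Delta^{n}}$ (your displayed formula agrees with the paper's after multiplying by $(-1)^{j-1}$), and verify the remaining (TAN) relation at $\delta_{j}$. The only presentational difference is that the paper encodes the horn datum via \cite[Corollary 5.3.2]{lowen2024enriched} as explicit families $x_{k}$, $y_{i}$ with compatibility conditions, whereas you argue by Yoneda on necklicial sets; both reductions are valid. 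Two caveats. First, the verification of the (TAN) relation at $\delta_{j}$ --- which you rightly call the main obstacle --- is only sketched; the paper carries it out by expanding $\partial(z_{\delta_{j}})$, applying the $A_{\infty}$-relations \eqref{Eq:AooRelation}, Lemma \ref{lemma:inertmapscombinatorics}(b)--(c), and the (TAN) relations at the inert $\mu\neq \id_{\Delta^{n}}$ and at the $\delta_{i}$ with $i\neq j$, exactly along the lines you indicate, so this is a deferral rather than a wrong step. Second, your closing ``alternative'' deduction from Faonte's Corollary 2.10 via Corollary \ref{Cor:UnderlyingSSet} is not justified: the weak Kan condition of Definition \ref{DefwK} is a $\K$-linear lifting condition involving arbitrary elements of the spaces $N^{A_{\infty}}_{\K}(\aaa)_{T}(a,b)$ (for instance sums of elementary tensors in $N^{A_{\infty}}_{\K}(\aaa)_{\Delta^{p}\vee\Delta^{q}}$), which is genuinely stronger than inner-horn filling in the underlying simplicial set, so the templicial statement does not follow from the simplicial one.
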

\begin{proof}
Set $X = N^{A_{\infty}}_{\mathbb{K}}(\mathcal{A})$ and let $0 < j < n$ be integers and $A,B\in \Ob(\mathcal{A})$. In view of \cite[Corollary 5.3.2]{lowen2024enriched}, let us take $x_{k}\in (X_{k}\otimes_{\Ob(\mathcal{A})} X_{n-k})(A,B)$ and $y_{i}\in X_{n-1}(A,B)$ for all $0 < k,i < n$ with $i\neq j$, such that
\begin{itemize}
\item for all $0 < i < i' < n$ with $i\neq j\neq i'$,
$$
d^{X}_{i'-1}(y_{i}) = d^{X}_{i}(y_{i'}),
$$
\item for all $0 < k < l < n$,
$$
(\id_{X_{k}}\otimes \mu^{X}_{l-k,n-l})(x_{k}) = (\mu^{X}_{k,l-k}\otimes \id_{X_{n-l}})(x_{l})
$$
\item for all $0 < k < n-1$ and $0 < i < n$ with $i\neq j$,
$$
\mu^{X}_{k,n-k-1}(y_{i}) =
\begin{cases}
(d^{X}_{i}\otimes \id_{X_{n-k-1}})(x_{k+1}) & \text{if }i\leq k\\
(\id_{X_{k}}\otimes d^{X}_{i-k})(x_{k}) & \text{if }i > k
\end{cases}
$$
\end{itemize}
Then we must show the existence of an element $z\in X_{n}(A,B)$ such that
\begin{equation}\label{equation:hornfiller}
\mu^{X}_{k,n-k}(z)=x_k\quad (0 < k < n)\quad \text{and}\quad d^{X}_i(z)=y_i\quad  (0 < i < n, i\neq j)
\end{equation}

Let us first define $z$ as an element of $\bigoplus_{g: U\hookrightarrow \Delta^{n}}(s\mathcal{A})_{U}(A,B)$. Given an injective necklace map $g: U\hookrightarrow \Delta^{n}$ such that $g\neq \delta_{j}$ and $g\neq \id_{\Delta^{n}}$, set
$$
z_{g} =
\begin{cases}
    \pi_{g'}(y_{i}) & \text{if }g = \delta_{i}g'\text{ for some }g': U\hookrightarrow \Delta^{n-1}\text{ and }0 < i < n\text{ with }i\neq j\\
    \pi_{g'}(x_{k}) & \text{if }g = \nu_{k,n-k}g'\text{ for some }g': U\hookrightarrow \Delta^{k}\vee \Delta^{n-k}\text{ and }0 < k < n
\end{cases}
$$
It follows from the conditions on $x_{k}$ and $y_{i}$ above that this doesn't depend on the choice of $k$, $i$ or $g'$. This guarantees that the conditions \eqref{equation:hornfiller} will be satisfied. Finally, define:
$$
z_{\id_{\Delta^{n}}} = 0\quad \text{and}\quad z_{\delta_{j}} = \sum_{\substack{i=1\\ i\neq j}}^{n-1}(-1)^{i+j-1}z_{\delta_{i}} + \sum_{\substack{\nu: S\hookrightarrow \Delta^{n}\\ \text{inert}\\ \ell(S)\geq 2}} (-1)^{\epsilon(S)+j-1}m_{\ell(S)}(z_{\nu})
$$
It remains to show that $z$ satisfies the (TAN) relations \eqref{equation:TAN}. By definition, they are satisfied at every $g: U\hookrightarrow \Delta^{n}$ with $g\neq \id_{\Delta^{n}}$ and $g\neq \delta_{j}$ because $y_{i}$ and $x_{k}$ satisfy the (TAN) relations. Further, it is clear from the choice of $z_{\id_{\Delta^{n}}}$ and $z_{\delta_{j}}$ that $z$ satisfies the (TAN) relation at $z = \id_{\Delta^{n}}$. Finally, the (TAN) relation at $g = \delta_{j}$ hold by the following calculation.

First note that by subsequently applying the $A_{\infty}$-relations \ref{Eq:AooRelation}, Lemma \ref{lemma:inertmapscombinatorics}$(b)$, the (TAN) relations for $z$ at every inert $\mu: V\hookrightarrow \Delta^{n}$ with $\mu\neq \id_{\Delta^{n}}$, and Lemma \ref{lemma:inertmapscombinatorics}$(c)$, we have
\begin{align*}
    &\sum_{\substack{\nu: S\hookrightarrow \Delta^{n}\\ \text{inert}\\ \ell(S)\geq 2}} (-1)^{\epsilon(S)} \partial m_{\ell(S)}(z_{\nu})\\
    =& \sum_{\substack{\nu: S\hookrightarrow \Delta^{n}\\ \text{inert}\\ \ell(S)\geq 2}}\sum_{\substack{r + s + t = \ell(S)\\ (r,t)\neq (0,0)}}(-1)^{\epsilon(S)+r+st+1} m_{r + 1 + t}(\id^{\otimes r}\otimes m_{s}\otimes \id^{\otimes t})(z_{\nu})\\
    =& \sum_{\substack{\mu: V\hookrightarrow \Delta^{n}\\ \text{inert}\\ \ell(V)\geq 2}}\sum_{k=1}^{\ell(V)}\sum_{\substack{\nu: W\hookrightarrow \Delta^{n_{k}}\\ \text{inert}}}(-1)^{\epsilon(V)-\ell(V)+\epsilon(W)+\psi_k(\nu)} m_{\ell(V)}(\id^{\otimes k-1}\otimes m_{\ell(W)}\otimes \id^{\otimes l-k})(z_{\mu\circ_{k} \nu})\\
     = &\sum_{\substack{\mu: V\hookrightarrow \Delta^{n}\\ \text{inert}\\ \ell(V)\geq 2}}\sum_{s=1}^{d}(-1)^{\epsilon(V)-\ell(V)+s-1} m_{\ell(V)}(z_{\mu\circ \delta_{i_{s}}}) \\
    = &\sum_{i=1}^{n-1}\sum_{\substack{\mu': V'\hookrightarrow \Delta^{n-1}\\ \text{inert}\\ \ell(V')\geq 2}}(-1)^{\epsilon(V')+i} m_{\ell(V')}(z_{\delta_{i}\circ \mu'})
\end{align*}
where we've written $V = \Delta^{n_{1}}\vee \dots \vee \Delta^{n_{l}}$ and $V^{c} = \{i_{1} < \dots < i_{d}\}$, so that $l = \ell(V)$ and $d = \dim(V)$.

Let us then calculate $\partial(z_{\delta_{j}})$. By applying the definition of $z_{\delta_{j}}$, the previous calculation, and the (TAN) relation for $z$ at $\delta_{i}$ for each $0 < i < n$ with $i\neq j$, we find:
\begin{align*}
    \partial(z_{\delta_{j}}) =& \sum_{\substack{i=1\\ i\neq j}}^{n-1}(-1)^{i+j-1}\partial(z_{\delta_{i}}) + \sum_{\substack{\nu: S\hookrightarrow \Delta^{n}\\ \text{inert}\\ \ell(S)\geq 2}}(-1)^{\epsilon(S)+j-1} \partial m_{\ell(S)}(z_{\nu})\\
    \end{align*}
    \begin{align*}   
    =& \sum_{\substack{i=1\\ i\neq j}}^{n-1}(-1)^{i+j-1}\partial(z_{\delta_{i}}) + \sum_{\substack{i=1\\ i\neq j}}^{n-1}\sum_{\substack{\mu': V'\hookrightarrow \Delta^{n-1}\\ \text{inert}\\ \ell(V')\geq 2}}(-1)^{\epsilon(V')+i+j-1} m_{\ell(V')}(z_{\delta_{i}\circ \mu'}) \\
    &+ \sum_{\substack{\mu': V'\hookrightarrow \Delta^{n}\\ \text{inert}\\ \ell(V')\geq 2}}(-1)^{\epsilon(V')+1} m_{\ell(V')}(z_{\delta_{j}\circ \mu'})\\
    =& \sum_{\substack{i=1\\ i\neq j}}^{n-1}\sum_{l=1}^{n-2}(-1)^{i+j+l} z_{\delta_{i}\circ \delta_{l}} + \sum_{\substack{\mu': V'\hookrightarrow \Delta^{n}\\ \text{inert}\\ \ell(V')\geq 2}}(-1)^{\epsilon(V')+1}m_{\ell(V')}(z_{\delta_{j}\circ \mu'})\\
    =& \sum_{l=1}^{n-2}(-1)^{l-1} z_{\delta_{j}\circ \delta_{l}} + \sum_{\substack{\mu': V'\hookrightarrow \Delta^{n}\\ \text{inert}\\ \ell(V')\geq 2}}(-1)^{\epsilon(V')+1}m_{\ell(V')}(z_{\delta_{j}\circ \mu'})\\
\end{align*}
where in the last equality we have used that $z_{\delta_{i}\circ \delta_{l}} = z_{\delta_{l}\circ \delta_{i-1}}$ for any $0 < l < i < n$. Note that we have recovered exactly the (TAN) relation for $z$ at $\delta_{j}$.
\end{proof}

\printbibliography
\end{document}